\documentclass[a4paper,12pt]{article}
\usepackage[utf8]{inputenc}

\usepackage{mathrsfs}
\usepackage{graphicx}
\usepackage{epstopdf}
\usepackage{enumerate}
\usepackage{multicol}
\usepackage{color}
\usepackage{amsmath,amssymb,amscd}
\usepackage{amsthm}

\usepackage{pdfpages}

\usepackage{times}
\usepackage[varg]{txfonts}

\usepackage[top=0.5in, bottom=0.5in, left=0.5in, right=0.5in]{geometry}

\theoremstyle{plain}
\newtheorem{thm}{Theorem}
\newtheorem{lem}{Lemma}
\newtheorem{cor}{Corollary}
\newtheorem{prop}{Proposition}

\theoremstyle{definition}
\newtheorem{dfn}{Definition}

\theoremstyle{remark}
\newtheorem*{rem}{Remark}
\newtheorem{opp}{Open Problem}
\newtheorem*{ackn}{Acknowledgment}

\title{On a class of linear functional equations\\ without range condition}
\author{Eszter Gselmann, Gergely Kiss and Csaba Vincze}

\begin{document}

\maketitle

\begin{abstract}
The main purpose of this work is to provide the general solutions of
a class of linear functional equations. Let $n\geq 2$ be an
arbitrarily fixed integer, let further $X$ and $Y$ be linear spaces
over the field $\mathbb{K}$ and let $\alpha_{i}, \beta_{i}\in
\mathbb{K}$, $i=1, \ldots, n$ be arbitrarily fixed constants. We
will describe all those functions $f, f_{i, j}\colon X\times Y\to
\mathbb{K}$, $i, j=1, \ldots, n$ that fulfill functional equation
\[
 f\left(\sum_{i=1}^{n}\alpha_{i}x_{i}, \sum_{i=1}^{n}\beta_{i}y_{i}\right)= \sum_{i, j=1}^{n}f_{i, j}(x_{i}, y_{j})
 \qquad
 \left(x_{i}\in X, y_{i}\in Y, i=1, \ldots, n\right). 
\]
Additionally, necessary and sufficient conditions
will also be given that guarantee the solutions to be non-trivial.
\end{abstract}

\begin{center}
\emph{
 Dedicated to Professor J\'{a}nos Acz\'{e}l on the occasion of his $95$\textsuperscript{th} birthday. 
 }
\end{center}

\section{Introduction}

As J\'{a}nos Acz\'{e}l wrote in his famous and pioneering monograph
\cite{Acz66}: `Functional equations have a long history and occur
almost everywhere. Their influence and applications can be felt in
every field, and all fields benefit from their contact, use, and
technique.' Almost the same can be said about the class of linear
functional equations. This area is one of the most investigated
topic in this field, several authors studied this class, see e.g.
\cite{AczHarMcKSak68, BahOlk17, BahOlk18, ChuEbaNgSahZen95, Dar61,
Kis15, KisLac15, KisVar14, KisVarVin15, Ste99, VarVin09, VarVin15, VinVar15}.

The main purpose of this paper is to describe the general solutions
of a class of linear functional equations. More precisely, we are
interested in the following problem. Let $n\geq 2$ be an arbitrarily
fixed integer, let further $X$ and $Y$ be linear spaces over the
field $\mathbb{K}$ and let $\alpha_{i}, \beta_{i}\in \mathbb{K}$,
$i=1, \ldots, n$ be arbitrarily fixed constants. Assume further that
for the functions $f, f_{i, j}\colon X\times Y\to \mathbb{K}$, $i,
j=1, \ldots, n$,  functional equation
\begin{equation}\label{Eq_main}
 f\left(\sum_{i=1}^{n}\alpha_{i}x_{i}, \sum_{i=1}^{n}\beta_{i}y_{i}\right)= \sum_{i, j=1}^{n}f_{i, j}(x_{i}, y_{j})
 \qquad
 \left(x_{i}\in X, y_{i}\in Y, i=1, \ldots, n\right)
\end{equation}
is fulfilled.

This equation belongs to the class of \emph{linear functional
equations}, that was thoroughly investigated by L.~Sz\'{e}kelyhidi in
\cite{Sze82, Sze89, Sze91}. For the sake of completeness, here we
briefly recall the main results from Sz\'{e}kelyhidi \cite{Sze82}.

\begin{dfn}
If $G, S$ are groups and $n$ is a positive integer, then a function
$A\colon G^{n}\to S$ is said to be \emph{$n$-additive} if it is a
homomorphism in each variable. Let $F\colon G^{n}\to S$ be a
function, then the function $\varphi \colon G\to S$ defined by
\[
 \varphi(x)= F(x, \ldots, x)
 \qquad
 \left(x\in G\right)
\]
is said to be the \emph{diagonal of $F$} and it is denoted by
$\mathrm{diag}(F)$. Further, let
\[
 A_{k}(x, y)=A(\underbrace{x, \ldots, x}_{\text{$k$ times}}, \underbrace{y, \ldots, y}_{\text{$n-k$ times}})
 \qquad 
 \left(x, y\in G\right). 
\]
\end{dfn}

\begin{rem}
Let $G, S$ be groups,  $n$ be a positive integer and 
$A\colon G^{n}\to S$ be an $n$-additive function. Then for all 
$k\in \mathbb{Z}$ and for arbitrary $i\in \left\{1, \ldots, n\right\}$ we have
\[
 A\left(x_{1}, \ldots, x_{i-1}, kx_{i}, x_{i+1}, \ldots, x_{n}\right)
 =
 k A\left(x_{1}, \ldots, x_{i-1}, x_{i}, x_{i+1}, \ldots, x_{n}\right)
 \qquad 
 \left(x_{1}, \ldots, x_{n}\in G\right). 
 \]
\end{rem}

For a function $f$, $\mathrm{rng}(f)$ denotes the range of $f$.

\begin{dfn}
 Let $G, S$ be Abelian groups, let $n$ be a non-negative integer.
 The function $f\colon G\to S$ is said to be \emph{of degree $n$},
 if there exist functions $f_{i}\colon G\to S$ and homomorphisms $\varphi_{i}, \psi_{i}\colon G\to G$ such that
 \[
 \tag{$\mathscr{R}_{1}$}
 \mathrm{rng}(\varphi_{i})\subset \mathrm{rng}(\psi_{i}) 
 \qquad 
 \left(i=1, 2, \ldots, n+1\right)
 \]
 and functional equation
 \begin{equation}\label{Eq_degree_n}
  f(x)+\sum_{i=1}^{n+1}f_{i}\left(\varphi_{i}(x)+\psi_{i}(y)\right)=0
  \qquad
  \left(x, y\in G\right)
 \end{equation}
holds.
\end{dfn}

\begin{dfn}
 Let $G, S$ be Abelian groups, let $n$ be a non-negative integer.
 The function $f\colon G\to S$ is called a  \emph{(generalized) polynomial degree $n$},
 if for all $k=0, 1, \ldots, n$ there exists a $k$-additive mapping
 $A_{k}\colon G^{k}\to S$ such that
 \[
  f=\sum_{k=0}^{n}\mathrm{diag}(A_{k}),
 \]
where $0$-additive functions are to be understood constant
functions.
 \end{dfn}

\begin{thm}[Theorem 3.6 of \cite{Sze82}]\label{T_Szekely1}
 Let $G, S$ be Abelian groups and suppose that $G$ is divisible.
 Let $n$ be a non-negative integer.
 The function $f\colon G\to S$ is of degree $n$ if and only if it is a polynomial of degree $n$.
\end{thm}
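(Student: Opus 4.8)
The plan is to establish the two implications separately, treating \emph{polynomial $\Rightarrow$ degree $n$} as the routine half and \emph{degree $n$ $\Rightarrow$ polynomial} as the substantial one, which I would prove by induction on $n$. Throughout I write $\Delta_{h}g(x)=g(x+h)-g(x)$ for the difference operator.

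For the easy direction, suppose $f=\sum_{k=0}^{n}\mathrm{diag}(A_{k})$. Such an $f$ satisfies Fréchet's functional equation $\Delta_{y}^{n+1}f\equiv 0$; expanding the binomial and isolating the $f(x)$-term gives
\[
 f(x)+\sum_{i=1}^{n+1}(-1)^{i}\binom{n+1}{i}f(x+iy)=0
 \qquad(x,y\in G).
\]
Setting $f_{i}(u)=(-1)^{i}\binom{n+1}{i}f(u)$, $\varphi_{i}=\mathrm{id}_{G}$ and $\psi_{i}(y)=iy$ turns this into \eqref{Eq_degree_n}. Here divisibility of $G$ is exactly what makes each $\psi_{i}$ surjective, so $\mathrm{rng}(\varphi_{i})=G=\mathrm{rng}(\psi_{i})$ and the range condition $(\mathscr{R}_{1})$ holds; hence $f$ is of degree $n$.

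For the converse I would induct on $n$. In the base case $n=0$ the equation reads $f(x)+f_{1}(\varphi_{1}(x)+\psi_{1}(y))=0$. Independence of the right-hand side from $y$ forces $f_{1}(\varphi_{1}(x)+\psi_{1}(y))$ to be constant in $y$; using $(\mathscr{R}_{1})$ to write $\varphi_{1}(x)=\psi_{1}(a)$ and letting $a+y$ range over all of $G$ shows $f_{1}\circ\psi_{1}$ is globally constant, whence $f$ is constant, i.e.\ a polynomial of degree $0$. For the inductive step, fix $h\in G$ and use $(\mathscr{R}_{1})$ at the last index to choose $w$ with $\psi_{n+1}(w)=\varphi_{n+1}(h)$. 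Evaluating \eqref{Eq_degree_n} at $(x+h,y)$ and at $(x,y+w)$ and subtracting, the $(n+1)$-st summand cancels (its two arguments coincide), and with $\widetilde{f}_{i}(u)=f_{i}(u+\varphi_{i}(h))-f_{i}(u+\psi_{i}(w))$ one is left with
\[
 \Delta_{h}f(x)+\sum_{i=1}^{n}\widetilde{f}_{i}(\varphi_{i}(x)+\psi_{i}(y))=0
 \qquad(x,y\in G).
\]
This is an equation of the form \eqref{Eq_degree_n} for $\Delta_{h}f$ with the \emph{same} homomorphisms $\varphi_{i},\psi_{i}$ ($i\le n$), so $(\mathscr{R}_{1})$ still holds and the induction hypothesis applies: $\Delta_{h}f$ is a polynomial of degree $n-1$ for every $h$. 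Consequently $\Delta_{h_{0}}\cdots\Delta_{h_{n}}f\equiv 0$, and the classical characterisation of generalized polynomials via the Fréchet equation (this is again where divisibility of $G$ enters) yields $f=\sum_{k=0}^{n}\mathrm{diag}(A_{k})$.

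The main obstacle is the cancellation step: the point is that the range condition $(\mathscr{R}_{1})$ lets me replace an increment $h$ in the $x$-variable by an increment $w$ in the shared $y$-variable \emph{on the single summand} $i=n+1$, which is precisely what lowers the number of terms from $n+1$ to $n$ and closes the induction. A secondary technical point to be careful about is the final passage from ``$\Delta_{h}f$ is a polynomial of degree $n-1$ for all $h$'' to ``$f$ is a polynomial of degree $n$'', where one must invoke the standard equivalence between annihilation by $(n+1)$-fold differences and the diagonalized $k$-additive representation, and verify that the divisibility hypothesis on $G$ supplies exactly what is needed to divide by the relevant factorials.
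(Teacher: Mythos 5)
The paper does not actually prove this statement: it is quoted from Sz\'ekelyhidi \cite{Sze82} as background (the authors immediately explain why it does \emph{not} apply to their equation), so there is no in-paper proof to compare against. That said, your reconstruction is essentially the argument of the cited source. The easy direction via the single-increment Fr\'echet equation with $\varphi_i=\mathrm{id}$, $\psi_i(y)=iy$ is fine, and you correctly identify that divisibility of $G$ is what makes each $\psi_i$ surjective so that $(\mathscr{R}_{1})$ holds. The induction in the converse direction is also sound: the base case works, and the key cancellation --- using $(\mathscr{R}_{1})$ at the last index to pick $w$ with $\psi_{n+1}(w)=\varphi_{n+1}(h)$, then comparing the equation at $(x+h,y)$ and $(x,y+w)$ so that the $(n+1)$-st summand drops out and $\Delta_h f$ is of degree $n-1$ with the same $\varphi_i,\psi_i$ --- is exactly the right reduction.

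The one point you should not treat as routine is the final appeal to ``the classical characterisation of generalized polynomials via the Fr\'echet equation''. The implication that $\Delta_{h_0}\cdots\Delta_{h_n}f\equiv 0$ on a divisible $G$ forces $f=\sum_{k=0}^{n}\mathrm{diag}(A_k)$ is a genuine theorem (Djokovi\'c; proved as a separate lemma in \cite{Sze82}), and it is where all the remaining work of this direction sits. It fails without divisibility of $G$: for $G=S=\mathbb{Z}$ the function $f(x)=\binom{x}{2}$ satisfies the third-order Fr\'echet equation but cannot be written as $c+a(x)+B(x,x)$ with $a$ additive and $B$ biadditive into $\mathbb{Z}$, since that would force $B(1,1)=\tfrac12$. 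Moreover the theorem assumes nothing about $S$, so the usual construction $A_k(y_1,\dots,y_k)=\tfrac{1}{k!}\Delta_{y_1}\cdots\Delta_{y_k}f(0)$ cannot be used as written; the divisions must be arranged on the $G$-side. If you regard that lemma as quotable, your proof is complete; otherwise it is the missing piece, not the cancellation step you highlight as the main obstacle.
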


\begin{thm}[Theorem 3.9 of \cite{Sze82}]\label{T_Szekely2}
 Let $G, S$ be Abelian groups and suppose that $G$ is divisible and $S$ is torsion free.
 Let $n\in \mathbb{N}$ be a non-negative integer and let
 $\varphi_{i}, \psi_{i}$ be homomorphisms of $G$ onto itself such that
 \[
  \tag{$\mathscr{R}_{2}$} \mathrm{rng}\left(\psi_{j}\circ \psi_{i}^{-1}-\varphi_{j}\circ \varphi_{i}^{-1}\right)= G
  \qquad
  \left(i\neq j, \; i, j=1, \ldots, n+1\right).
 \]
The functions $f_{i}\colon G\to S \; (i=0, 1, \ldots, n+1)$  satisfy
functional equation
\[
 f_{0}(x)+\sum_{i=1}^{n+1}f_{i}\left(\varphi_{i}(x)+\psi_{i}(y)\right)=0
 \qquad
 \left(x, y\in G\right)
\]
if and only if for all $k=0, 1, \ldots, n$ and $i=0, 1, \ldots, n+1$
there exist symmetric $k$-additive functions $A^{(i)}_{k}\colon
G^{k}\to S$ such that
\[
 f_{i}=\sum_{k=0}^{n}\mathrm{diag}\left(A^{(i)}_{k}\right)
 \qquad
 \left(i=0, 1, \ldots, n+1\right)
\]
and the equations
\[
 A^{(0)}_{k, j}(x, 0)+\sum_{i=1}^{n+1}A^{(i)}_{k, j}\left(\varphi_{i}(x), \psi_{i}(y)\right)=0
 \qquad
 \left(x, y\in G\right)
\]
hold for all $j=0, 1, \ldots, n$ and $k=j, j+1, \ldots, n$.
\end{thm}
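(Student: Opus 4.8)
The statement is an equivalence, and I would treat the two implications separately, the forward (necessity) implication being the substantial one.

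For necessity, the first observation is that since every $\varphi_i$ and $\psi_i$ maps $G$ onto itself we have $\mathrm{rng}(\psi_i)=G\supseteq\mathrm{rng}(\varphi_i)$, so the range condition $(\mathscr{R}_1)$ is automatic. Read with $f_0$ in the distinguished position, the given equation is exactly an instance of \eqref{Eq_degree_n}, so Theorem~\ref{T_Szekely1} (applicable since $G$ is divisible) yields that $f_0$ is a polynomial of degree $n$. To obtain the same for each $f_i$ with $i\geq 1$, I would isolate $f_i$ by the substitution that turns its argument into a free variable: fixing $i$, set $y=\psi_i^{-1}(u-\varphi_i(x))$ (the maps are automorphisms, as the notation $\varphi_i^{-1},\psi_i^{-1}$ in $(\mathscr{R}_2)$ indicates), so that $\varphi_i(x)+\psi_i(y)=u$ while each remaining term becomes $f_\ell\big(\psi_\ell\psi_i^{-1}(u)+(\varphi_\ell-\psi_\ell\psi_i^{-1}\varphi_i)(x)\big)$. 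The rearranged equation is again of the form \eqref{Eq_degree_n} with $f_i$ distinguished, and its range condition requires $\mathrm{rng}(\varphi_\ell-\psi_\ell\psi_i^{-1}\varphi_i)=G$; factoring $\varphi_i$ on the right and using its invertibility, this equals $\mathrm{rng}(\varphi_\ell\varphi_i^{-1}-\psi_\ell\psi_i^{-1})$, which is precisely $(\mathscr{R}_2)$ for the pair $(\ell,i)$. Hence Theorem~\ref{T_Szekely1} applies once more and every $f_i$ is a polynomial of degree $n$; after symmetrisation I may write $f_i=\sum_{k=0}^n\mathrm{diag}(A^{(i)}_k)$ with symmetric $k$-additive $A^{(i)}_k$.

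The second stage is to substitute these representations back and separate by homogeneity. Expanding each diagonal by multiadditivity and symmetry gives $\mathrm{diag}(A^{(i)}_k)(\varphi_i(x)+\psi_i(y))=\sum_{j=0}^k\binom{k}{j}A^{(i)}_{k,j}(\varphi_i(x),\psi_i(y))$, while $f_0(x)=\sum_{k}\sum_{j}\binom{k}{j}A^{(0)}_{k,j}(x,0)$ since only $j=k$ survives. The equation then reads $\sum_{k=0}^n\sum_{j=0}^k\binom{k}{j}E_{k,j}(x,y)=0$ with $E_{k,j}(x,y)=A^{(0)}_{k,j}(x,0)+\sum_{i=1}^{n+1}A^{(i)}_{k,j}(\varphi_i(x),\psi_i(y))$. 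To show each $E_{k,j}$ vanishes I would replace $(x,y)$ by $(rx,sy)$ for $r,s\in\mathbb{Z}$; by the Remark each summand scales by $r^js^{k-j}$, so the identity becomes a two-variable polynomial identity in $(r,s)$ whose coefficients are the $\binom{k}{j}E_{k,j}(x,y)$. Distinct pairs $(k,j)$ yield distinct monomials $r^js^{k-j}$, so a Vandermonde argument — legitimate because $S$ is torsion free and hence embeds into the $\mathbb{Q}$-vector space $S\otimes\mathbb{Q}$ — forces every coefficient to zero; torsion freeness then divides out $\binom{k}{j}$, giving exactly the asserted relations $E_{k,j}=0$.

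Sufficiency is the reverse computation: given the $A^{(i)}_k$ and the relations $E_{k,j}=0$, the same expansion collapses $f_0(x)+\sum_{i=1}^{n+1}f_i(\varphi_i(x)+\psi_i(y))$ into $\sum_{k,j}\binom{k}{j}E_{k,j}(x,y)=0$, so the functional equation holds. The main obstacle I anticipate is the first stage: correctly performing the substitution that places each $f_i$ in the distinguished position and verifying that $(\mathscr{R}_2)$ is exactly the range hypothesis needed for Theorem~\ref{T_Szekely1} on the rearranged equation — this is where $(\mathscr{R}_2)$, rather than the weaker $(\mathscr{R}_1)$, is genuinely used. The homogeneity separation, though technical, is routine once divisibility of $G$ and torsion freeness of $S$ are in hand.
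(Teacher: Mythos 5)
This statement is quoted verbatim from Sz\'{e}kelyhidi (Theorem 3.9 of \cite{Sze82}) and the paper offers no proof of its own, so there is nothing internal to compare against; judged on its own terms, your argument is correct and is essentially the standard one underlying the original proof. Both halves work: the substitution $y=\psi_i^{-1}\left(u-\varphi_i(x)\right)$ does put $f_i$ in the distinguished position of \eqref{Eq_degree_n} with exactly $n+1$ remaining terms, and your identification of $\mathrm{rng}\left(\varphi_\ell-\psi_\ell\psi_i^{-1}\varphi_i\right)=\mathrm{rng}\left(\varphi_\ell\varphi_i^{-1}-\psi_\ell\psi_i^{-1}\right)$ shows precisely where $(\mathscr{R}_2)$ enters; the homogeneity separation via $(rx,sy)$ and torsion-freeness is likewise sound. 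The only step you wave at is ``after symmetrisation'': Theorem \ref{T_Szekely1} delivers $k$-additive components that need not be symmetric, and replacing them by symmetric ones with the same diagonal requires dividing the symmetrised sum by $k!$, which is legitimate here but uses \emph{both} hypotheses (divisibility of $G$ to pull back the argument, torsion-freeness of $S$ to make the pullback well defined) and deserves a line of justification.
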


Observe that equation \eqref{Eq_main} can be reduced to the form \eqref{Eq_degree_n}. 
Indeed, suppose that $n=2$ (or substitute zero in place of the variables except a distinguished pair) and consider the following family of
homomorphisms
\[
 \varphi_{\alpha, \beta}(x, y)=
 \begin{pmatrix}
  \alpha & 0\\
  0& \beta
 \end{pmatrix}
 \cdot
 \begin{pmatrix}
x\\
y
 \end{pmatrix}
\qquad \left(x\in X, y\in Y, \alpha, \beta \in \mathbb{K}\right).
\]
With these notations \eqref{Eq_main} can be re-written as
\begin{multline*}
 f\left(\varphi_{\alpha_{1}, \beta_{1}}(\mathbf{u})+\varphi_{\alpha_{2}, \beta_{2}}(\mathbf{v})\right)
 =
 \\
 f_{1, 1}(\varphi_{1, 1}(\mathbf{u})+\varphi_{0, 0}(\mathbf{v}))+
 f_{1, 2}(\varphi_{1, 0}(\mathbf{u})+\varphi_{0, 1}(\mathbf{v}))+
 f_{2, 1}(\varphi_{0, 1}(\mathbf{u})+\varphi_{1, 0}(\mathbf{v}))+
 f_{2, 2}(\varphi_{0, 0}(\mathbf{u})+\varphi_{1, 1}(\mathbf{v}))
 \\
 \left(\mathbf{u}, \mathbf{v}\in X\times Y\right).
\end{multline*}

At the same time (as it can be seen in the following subsection), we
\emph{cannot} state that the functions involved are polynomials.
This is because the fact that the homomorphisms $\varphi_{\alpha, \beta}$ 
defined above in general do not fulfill range condition
$(\mathscr{R}_{1})$, neither fulfill range condition $(\mathscr{R}_{2})$. 
What is more, they are injective if and only if $\alpha,
\beta \neq 0$ and in such a situation $\varphi_{\alpha, \beta}^{-1}=
\varphi_{\alpha^{-1}, \beta^{-1}}$. Notice that equation
\eqref{Eq_main} involves the projections $\varphi_{1, 0},
\varphi_{0, 1}$ and $\varphi_{0, 0}$. None of these are injective. This
shows that Theorems \ref{T_Szekely1} and \ref{T_Szekely2} cannot be
applied in our situation.

\section{Special cases of the original equation}

\subsection{The one-variable sub-case}

In this sub-case let $n\in \mathbb{N}, n\geq 2$ be arbitrarily
fixed, $X$ be a linear space over the field $\mathbb{K}$ and suppose
that for the functions $f, f_{1}, \ldots, f_{n}\colon X\to
\mathbb{K}$ functional equation
\begin{equation}\label{Eq_one_var}
 f\left(\sum_{i=1}^{n}\alpha_{i}x_{i}\right)=\sum_{i=1}^{n}f_{i}(x_{i})
 \qquad
 \left(x_{1}, \ldots, x_{n}\in X\right)
\end{equation}
holds with certain constants $\alpha_{1}, \ldots, \alpha_{n} \in \mathbb{K}$.

Observe that without loss of generality
\[
\tag{$\ast$}
 f(0)=f_{1}(0)= \ldots =f_{n}(0)=0
\]
can be assumed. Otherwise we consider the functions
\[
 \begin{array}{rcl}
  \widetilde{f}(x)&=&f(x)-f(0)\\
  \widetilde{f_{1}}(x)&=&f_{1}(x)-f_{1}(0)\\
  &\vdots& \\
  \widetilde{f_{n}}(x)&=&f_{n}(x)-f_{n}(0)
 \end{array}
\qquad \left(x\in X\right).
\]
They clearly vanish at zero and they also fulfill the above
functional equation. Therefore from now on we always suppose that $(\ast)$ holds.

As we will see, the solutions of equation \eqref{Eq_one_var} heavily
depend on whether or not there are zeros among the parameters
$\alpha_{1}, \ldots, \alpha_{n}$. We may (and also do) assume that
these parameters are arranged in the following way: there exists  a
non-negative integer $k\leq n$ such that $\alpha_{i}\neq 0$ for $i=1,
\ldots, k$, but $\alpha_{i}=0$ for all $i=k+1, \ldots, n$.

\begin{prop}
 Let $n\in \mathbb{N}, n\geq 2$ be arbitrarily fixed,
$X$ be a linear space over the field $\mathbb{K}$ and suppose that
for the functions $f, f_{1}, \ldots, f_{n}\colon X\to \mathbb{C}$
functional equation \eqref{Eq_one_var} holds with certain constants
$\alpha_{1}, \ldots, \alpha_{n} \in \mathbb{K}$ and assume that
$(\ast)$ is also satisfied. Suppose further that  $\alpha_{i}\neq 0$
for $i=1, \ldots, k$, but $\alpha_{i}=0$ for all $i=k+1, \ldots, n$.
Then
\begin{enumerate}[(i)]
\item in case $k=0$, all the functions $f_{1}, \ldots, f_{n}$ are identically zero and $f\colon X\to \mathbb{C}$ is
any function fulfilling $f(0)=0$, 
\item in case $k=1$, all the functions  $f_{2}, \ldots, f_{n}$ are identically zero and $f, f_{1}\colon X\to \mathbb{C}$ are
any functions vanishing at zero and  fulfilling
\[
 f\left(\alpha_{1}x\right)= f_{1}(x)
 \qquad
 \left(x\in X\right), 
\]
\item otherwise, there exists an additive function $\chi\colon X\to \mathbb{C}$ such that
\[
 f(x)=\chi(x)
 \quad
 \text{and}
 \quad
 f_{i}(x)=\chi(\alpha_{i} x)
 \quad
 \text{for}
 \quad
 i=1, \ldots, k
\]
and the functions $f_{k+1}, \ldots, f_{n}$ are identically zero.
\end{enumerate}
Conversely, the mappings $f, f_{1}, \ldots, f_{n}\colon X\to
\mathbb{C}$ vanish at zero and they also fulfill \eqref{Eq_one_var}.
\end{prop}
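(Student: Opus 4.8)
The plan is to exploit the normalization $(\ast)$ together with substitutions of $0$ into selected variables. \textbf{Step 1: the vanishing parameters.} Fix an index $i\in\{k+1,\ldots,n\}$, so that $\alpha_i=0$, and in \eqref{Eq_one_var} hold every variable fixed except $x_i$. Since $\alpha_i=0$, the argument $\sum_{j}\alpha_j x_j$ on the left-hand side does not depend on $x_i$, so the left-hand side is constant in $x_i$; on the right-hand side only the summand $f_i(x_i)$ varies with $x_i$, whence $f_i$ is constant, and $(\ast)$ forces $f_i\equiv 0$. This proves $f_{k+1}=\cdots=f_n=0$ in every case and reduces \eqref{Eq_one_var} to
\[
f\left(\sum_{i=1}^{k}\alpha_i x_i\right)=\sum_{i=1}^{k}f_i(x_i)
\]
with all $\alpha_i\neq 0$.

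\textbf{Step 2: the shape of the surviving $f_i$ and the easy cases.} Setting all variables except $x_i$ equal to $0$ and using $(\ast)$ gives $f(\alpha_i x)=f_i(x)$ for each $i\in\{1,\ldots,k\}$. If $k=0$ the reduced equation degenerates to $f(0)=0$, which is just $(\ast)$ and leaves $f$ otherwise unconstrained, yielding (i). If $k=1$ the reduced equation is exactly $f(\alpha_1 x)=f_1(x)$ with no further restriction, yielding (ii).

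\textbf{Step 3: the case $k\geq 2$.} Substituting $f_i(x)=f(\alpha_i x)$ into the reduced equation gives
\[
f\left(\sum_{i=1}^{k}\alpha_i x_i\right)=\sum_{i=1}^{k}f(\alpha_i x_i).
\]
Because each $\alpha_i\neq 0$, the map $x\mapsto\alpha_i x$ is a bijection of $X$; writing $y_i=\alpha_i x_i$ I obtain $f\!\left(\sum_{i=1}^k y_i\right)=\sum_{i=1}^k f(y_i)$ for \emph{all} $y_1,\ldots,y_k\in X$. Setting $y_3=\cdots=y_k=0$ (this is where $k\geq 2$ enters) and using $f(0)=0$ reduces the identity to the Cauchy equation $f(y_1+y_2)=f(y_1)+f(y_2)$, so $f$ is additive. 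Putting $\chi=f$ we get $f_i(x)=f(\alpha_i x)=\chi(\alpha_i x)$, which is (iii).

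The converse is a routine verification: substituting the asserted forms back into \eqref{Eq_one_var} and using additivity of $\chi$ in case (iii) recovers the equation, and all the listed functions vanish at $0$. The argument carries no serious obstacle; the one point I would handle with care is the invocation of the bijectivity of $x\mapsto\alpha_i x$, which guarantees that the reduced identity holds for all arguments in $X$ and not merely on the image of some map, so that the passage to the genuine Cauchy equation is legitimate.
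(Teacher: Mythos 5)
Your proof is correct, and it reaches the same destination as the paper's by a slightly reorganized route. The treatment of the indices with $\alpha_i=0$ and of the cases $k=0,1$ coincides with the paper's (the paper substitutes $x_1=\cdots=x_k=0$ and uses independence of the variables; you isolate one variable at a time — same idea). The difference is in the case $k\geq 2$: the paper sets all but two surviving variables to zero, rescales via $\widetilde{f_l}(x)=f_l(x/\alpha_l)$, and invokes the solution of the resulting system of Pexider equations $f(x_i+x_j)=\widetilde{f_i}(x_i)+\widetilde{f_j}(x_j)$; you instead first extract the identity $f_i(x)=f(\alpha_i x)$ by isolating a single variable, substitute it back, and use the bijectivity of $x\mapsto\alpha_i x$ (legitimate, since $\alpha_i$ is an invertible scalar acting on the linear space $X$) to arrive at the Cauchy equation for $f$ itself. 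Your version is marginally more self-contained — it needs only Cauchy additivity, which is immediate here thanks to the normalization $(\ast)$, rather than the Pexider theorem as a black box — while the paper's formulation makes the Pexider structure explicit, which is the template it reuses throughout the two-variable sections. Both arguments are complete, including the routine converse.
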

\begin{proof}
 In case $k=0$ equation \eqref{Eq_one_var} reduces to
 \[
  \sum_{i=1}^{n}f_{i}(x_{i})=0
  \qquad
  \left(x_{1}, \ldots, x_{n}\in X\right).
 \]
Since we have independent variables, this immediately yields that
the involved functions have to be constant functions. In view of
$(\ast)$ this means that they have to be identically zero and the
only information we get for the function $f$ is that $f(0)=0$.

In case $k\geq 1$, our equation can be written as
\[
 f\left(\sum_{i=1}^{k}\alpha_{i}x_{i}\right)=\sum_{j=1}^{n}f_{j}(x_{j})
 \qquad
 \left(x_{1}, \ldots, x_{n}\in X\right).
\]
With the substitution
\[
 x_{1}=\ldots= x_{k}=0
\]
we obtain that
\[
 0=\sum_{j=k+1}^{n}f_{j}(x_{j})
 \qquad
 \left(x_{k+1}, \ldots, x_{n}\in X\right),
\]
which (similarly as above) yields that the functions $f_{k+1},
\ldots, f_{n}$ are identically zero. Using this, the functions $f,
f_{1}, \ldots, f_{k}$ fulfill
\begin{equation}\label{Eq_one_var_red}
 f\left(\sum_{i=1}^{k}\alpha_{i}x_{i}\right)=\sum_{i=1}^{k}f_{i}(x_{i})
  \qquad
 \left(x_{1}, \ldots, x_{k}\in X\right).
\end{equation}
If $k=1$, this is nothing but
\[
 f\left(\alpha_{1}x\right)= f_{1}(x)
 \qquad
 \left(x\in X\right),
\]
showing that in this case there is nothing to prove.

Assume that $k\geq 2$ and let $i, j\in \left\{1, \ldots, k\right\}$
be different integers. Then equation \eqref{Eq_one_var_red} with
$x_{l}=0$ for $l\in \left\{1, \ldots, k\right\}\setminus \left\{i,
j\right\}$ is
\[
 f(\alpha_{i}x_{i}+\alpha_{j}x_{j})=f_{i}(x_{i})+f_{j}(x_{j})
 \qquad
 \left(x_{i}, x_{j}\in X\right),
\]
which, after introducing the functions
\[
 \widetilde{f_{l}}(x)=f_{l}\left(\dfrac{x}{\alpha_{l}}\right)
 \qquad
 \left(x\in X, l=1, \ldots, k\right)
\]
can be reduced to the system of Pexider equations
\[
 f(x_{i}+x_{j})=\widetilde{f_{i}}(x_{i})+\widetilde{f_{j}}(x_{j})
 \qquad
 \left(x_{i}, x_{j}\in X, i, j\in \left\{1, \ldots, k\right\}, i\neq j\right).
\]
This means that there exists an additive function $\chi\colon X\to
\mathbb{C}$ such that
\[
  f(x)=\chi(x)
 \quad
 \text{and}
 \quad
 f_{i}(x)=\chi(\alpha_{i} x)
 \quad
 \text{for}
 \quad
 i=1, \ldots, k.
\]
\end{proof}

\section{The two-variable case with $n=2$}\label{Sec_n2}

In this section we will focus on functional equation
\begin{multline}\label{Eq_multi2}
 f\left(\alpha_{1}\,x_{1}+\alpha_{2}\,x_{2} , \beta_{1}\,y_{1}+\beta_{2}\,y_{2}\right)
 \\
 =f_{1,1}(x_{1},y_{1})+f_{1,2}(x_{1},y_{2})+f_{2,1}(x_{2},y_{1})+f_{2 ,2}(x_{2},y_{2})
 \qquad
 \left(x_{1}, x_{2}\in X, y_{1}, y_{2}\in Y\right),
\end{multline}
where $f, f_{1, 1}, f_{1, 2}, f_{2, 1}, f_{2, 2}\colon X\times Y\to
\mathbb{K}$ denote the unknown functions and $\alpha_{1},
\alpha_{2}, \beta_{1}, \beta_{2}\in \mathbb{K}$ are given constants.

Observe that without loss of generality
\[
 f(0, 0)=f_{i, j}(0, 0)= 0
\]
can be supposed. Otherwise we consider the functions
\[
 \begin{array}{rcl}
  \widetilde{f}(x, y)&=&f(x, y)-f(0, 0)\\
  \widetilde{f_{i, j}}(x, y)&=&f_{i, j}(x, y)-f_{i, j}(0, 0)
   \end{array}
\qquad \left(x\in X, y\in Y\right).
\]
They clearly vanish at the point $(0, 0)$ and they also fulfill the
same functional equation. Similarly as previously, from now on 
we always suppose that all the involved functions vanish at the point $(0, 0)$.

This section will be divided into two parts. At the first one, we
will consider the so-called degenerate cases, where at least one of
the parameters $\alpha_{1}, \alpha_{2}, \beta_{1}, \beta_{2}$ is
zero. After that the non-degenerate case will follow, that is, when
none of the above parameters are zero.

\subsection{Degenerate cases}

\subsubsection{The homogeneous case $\alpha_{1}=\alpha_{2}=\beta_{1}=\beta_{2}=0$}

In case $\alpha_{1}=\alpha_{2}=\beta_{1}=\beta_{2}=0$ equation
\eqref{Eq_multi2} reduces to
\[
 f_{1,1}(x_{1},y_{1})+f_{1,2}(x_{1},y_{2})+f_{2,1}(x_{2},y_{1})+f_{2 ,2}(x_{2},y_{2})=0
 \qquad
 \left(x_{1}, x_{2}\in X, y_{1}, y_{2}\in Y\right).
\]

\begin{prop}\label{Prop1}
 Let $X$ and $Y$ be linear spaces over the field $\mathbb{K}$ and
 $f_{1, 1}, f_{1, 2}, f_{2, 1}, f_{2, 2}\colon X\times Y\to \mathbb{K}$ be functions such that
 \begin{equation}\label{Eq_deg2}
  f_{1,1}(x_{1},y_{1})+f_{1,2}(x_{1},y_{2})+f_{2,1}(x_{2},y_{1})+f_{2 ,2}(x_{2},y_{2})=0
 \qquad
 \left(x_{1}, x_{2}\in X, y_{1}, y_{2}\in Y\right).
 \end{equation}
 Then and only then for all $i, j=1, 2$ there exist functions $\chi_{i, j}\colon X\to \mathbb{K}$ and
$\zeta_{i, j}\colon Y\to \mathbb{K}$ vanishing at $0$ such that
\[
 f_{i, j}(x, z)= \chi_{i, j}(x)+\zeta_{i, j}(z)
 \qquad
 \left(x\in X, z\in Y, i, j=1, 2\right)
\]
as well as
\[
 \begin{array}{rcl}
\chi_{1,2}(x)+\chi_{1,1}(x)&=&0\\
\chi_{2,2}(x)+\chi_{2,1}(x)&=&0\\
\zeta_{2,1}(z)+\zeta_{1,1}(z)&=&0\\
\zeta_{2,2}(z)+\zeta_{1,2}(z)&=&0
 \end{array}
 \qquad
 \left(x\in X, z\in Y\right).
\]
\end{prop}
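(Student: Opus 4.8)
The plan is to establish both implications, with essentially all the work concentrated in the necessity (``only then'') direction; the sole tool will be substitution of $0$ for chosen variables, which is legitimate since $x_1,x_2\in X$ and $y_1,y_2\in Y$ vary independently in \eqref{Eq_deg2}. For necessity I would first fix the natural candidate decomposition by setting $\chi_{i,j}(x):=f_{i,j}(x,0)$ and $\zeta_{i,j}(z):=f_{i,j}(0,z)$ for $i,j=1,2$; because every $f_{i,j}$ vanishes at $(0,0)$ by our standing normalization, each $\chi_{i,j}$ and each $\zeta_{i,j}$ vanishes at $0$, as required.

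The heart of the argument is a sequence of four substitutions, one for each pairing of a vanishing $x$-variable with a vanishing $y$-variable. For example, putting $x_2=y_2=0$ in \eqref{Eq_deg2} and using $f_{2,2}(0,0)=0$ isolates
\[
f_{1,1}(x_1,y_1)=-f_{1,2}(x_1,0)-f_{2,1}(0,y_1)=-\chi_{1,2}(x_1)-\zeta_{2,1}(y_1).
\]
Specializing this identity to $y_1=0$ gives $\chi_{1,1}=-\chi_{1,2}$ (the first constraint), and to $x_1=0$ gives $\zeta_{1,1}=-\zeta_{2,1}$ (the third constraint); substituting these back yields precisely $f_{1,1}(x,z)=\chi_{1,1}(x)+\zeta_{1,1}(z)$. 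The remaining three substitutions, $x_2=y_1=0$, $x_1=y_2=0$ and $x_1=y_1=0$, are completely parallel: each isolates a single $f_{i,j}$, produces its additive decomposition, and reproduces exactly two of the four constraints. A convenient consistency check is that across the four substitutions each of the four constraints is obtained twice.

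For sufficiency I would substitute $f_{i,j}(x,z)=\chi_{i,j}(x)+\zeta_{i,j}(z)$ into the left-hand side of \eqref{Eq_deg2} and group the summands by the variable on which each depends; the left-hand side then equals
\[
\bigl[\chi_{1,1}(x_1)+\chi_{1,2}(x_1)\bigr]+\bigl[\chi_{2,1}(x_2)+\chi_{2,2}(x_2)\bigr]+\bigl[\zeta_{1,1}(y_1)+\zeta_{2,1}(y_1)\bigr]+\bigl[\zeta_{1,2}(y_2)+\zeta_{2,2}(y_2)\bigr],
\]
and the four stated constraints annihilate each bracket, so the sum vanishes identically.

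I do not expect a genuine obstacle: the statement is in essence a bookkeeping consequence of the independence of the four variables. The only point demanding care is to keep the index conventions consistent --- recalling that in $f_{i,j}$ the first index marks the $X$-slot and the second the $Y$-slot --- so that the substitutions pair up the correct functions and the constraints emerge with the indices exactly as stated.
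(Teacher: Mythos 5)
Your proof is correct and follows essentially the same route as the paper: both define $\chi_{i,j}(x)=f_{i,j}(x,0)$, $\zeta_{i,j}(z)=f_{i,j}(0,z)$ and extract the decomposition and the four constraints by setting selected variables to zero in \eqref{Eq_deg2}. The only difference is organizational --- the paper first derives the constraints from substitutions with three variables zero and then the decompositions from substitutions with two variables zero, whereas you obtain both from the four two-zero substitutions and further specialization --- which changes nothing of substance.
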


\begin{proof}
 For $i, j\in \left\{1, 2\right\}$ let us define the functions $\chi_{i, j}\colon X\to \mathbb{K}$ and $\zeta_{i, j}\colon Y\to \mathbb{K}$ through
 \[
  \chi_{i, j}(x)= f_{i, j}(x, 0)
  \quad
  \text{and}
  \quad
  \zeta_{i, j}(z)= f_{i, j}(0, z)
  \qquad
  \left(x\in X, z\in Y\right).
 \]
With the notation
\[
 E(x_1, x_2, y_1, y_2)
 =f_{1,1}(x_1,y_1)+f_{1,2}(x_1,y_2)+f_{2,1}(x_2,y_1)+f_{2,2}(x_2,y_2)
 \qquad
 \left(x_{1}, x_{2}\in X, y_{1}, y_{2}\in Y\right),
\]
identities
\[
 \begin{array}{rcl}
  E(x, 0, 0, 0)&=&0\\
  E(0, x, 0, 0)&=&0\\
  E(0, 0, z, 0)&=&0\\
  E(0, 0, 0, z)&=&0
 \end{array}
\qquad \left(x\in X, z\in Y\right)
\]
give that
\[
 \begin{array}{rcl}
\chi_{1,2}(x)+\chi_{1,1}(x)&=&0\\
\chi_{2,2}(x)+\chi_{2,1}(x)&=&0\\
\zeta_{2,1}(z)+\zeta_{1,1}(z)&=&0\\
\zeta_{2,2}(z)+\zeta_{1,2}(z)&=&0
 \end{array}
 \qquad
 \left(x\in X, z\in Y\right).
\]
Moreover, equations
\[
 \begin{array}{rcl}
  E(x_1, 0, y_1, 0)&=&0\\
  E(0, x_2, y_1, 0)&=&0\\
  E(x_1, 0, 0, y_2)&=&0\\
  E(0, x_2, 0, y_2)&=&0
 \end{array}
\qquad
 \left(x_{1}, x_{2}\in X, y_{1}, y_{2}\in Y\right)
\]
yield that
\[
 f_{i, j}(x, z)= \chi_{i, j}(x)+\zeta_{i, j}(z)
 \qquad
 \left(x\in X, z\in Y, i, j=1, 2\right),
\]
where we used the previously proved identities, too.
\end{proof}

\subsubsection{The case $\alpha_{1}=\alpha_{2}=\beta_{1}=0$ and $\beta_{2}\neq 0$}

In such a situation  \eqref{Eq_multi2} reduces to
\[
 f(0, \beta_{2}y_{2})= f_{1,1}(x_{1},y_{1})+f_{1,2}(x_{1},y_{2})+f_{2,1}(x_{2},y_{1})+f_{2 ,2}(x_{2},y_{2})
 \qquad
 \left(x_{1}, x_{2}\in X, y_{1}, y_{2}\in Y\right).
\]

Obviously, $\beta_{2}=1$ can be assumed, otherwise we consider the
functions $\widetilde{f_{1, 2}}, \widetilde{f_{2, 2}}\colon X\times
Y\to \mathbb{K}$ defined through
\[
\begin{array}{rcl}
 \widetilde{f_{1, 2}}(x, z)&=&f_{1, 2}\left(x, \frac{z}{\beta_{2}}\right)\\
  \widetilde{f_{2, 2}}(x, z)&=&f_{2, 2}\left(x, \frac{z}{\beta_{2}}\right)
  \end{array}
  \qquad
  \left(x\in X, z\in Y\right).
\]

\begin{prop}\label{Prop3.1.3}
 Let $X$ and $Y$ be linear spaces over the field $\mathbb{K}$ and
 $f, f_{1, 1}, f_{1, 2}, f_{2, 1}, f_{2, 2}\colon X\times Y\to \mathbb{K}$ be functions such that
 \[
  f(0, y_{2})=f_{1,1}(x_{1},y_{1})+f_{1,2}(x_{1},y_{2})+f_{2,1}(x_{2},y_{1})+f_{2 ,2}(x_{2},y_{2})
 \qquad
 \left(x_{1}, x_{2}\in X, y_{1}, y_{2}\in Y\right).
 \]
Then and only then for all $i, j=1, 2$ there exist functions
$\chi_{i, j}\colon X\to \mathbb{K}$ and $\zeta_{i, j}\colon Y\to\mathbb{K}$ vanishing at $0$ such that
\[
 f_{i, j}(x, z)= \chi_{i, j}(x)+\zeta_{i, j}(z)
 \qquad
 \left(x\in X, z\in Y, i, j=1, 2\right)
\]
as well as
\[
 \begin{array}{rcl}
\chi_{1,2}(x)+\chi_{1,1}(x)&=&0\\
\chi_{2,2}(x)+\chi_{2,1}(x)&=&0\\
\zeta_{2,1}(z)+\zeta_{1,1}(z)&=&0\\
\zeta_{2,2}(z)+\zeta_{1,2}(z)&=&f(0, z)
 \end{array}
 \qquad
 \left(x\in X, z\in Y\right).
\]
\end{prop}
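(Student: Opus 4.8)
The plan is to mimic the proof of Proposition~\ref{Prop1}, since the functional equation here differs only by the extra term $f(0,y_2)$ on the left-hand side, which depends solely on the second variable of one slot. First I would define the auxiliary functions exactly as before, setting $\chi_{i,j}(x)=f_{i,j}(x,0)$ and $\zeta_{i,j}(z)=f_{i,j}(0,z)$ for $i,j\in\{1,2\}$, so that all of them vanish at $0$ by hypothesis. I would also introduce the abbreviation
\[
 E(x_1,x_2,y_1,y_2)=f_{1,1}(x_1,y_1)+f_{1,2}(x_1,y_2)+f_{2,1}(x_2,y_1)+f_{2,2}(x_2,y_2),
\]
so the hypothesis reads $f(0,y_2)=E(x_1,x_2,y_1,y_2)$ for all choices of the variables.

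Next I would extract the linear relations among the $\chi_{i,j}$ and $\zeta_{i,j}$ by evaluating $E$ at the four one-active-variable points, just as in Proposition~\ref{Prop1}, but now being careful about which substitution forces the right-hand side to equal $f(0,0)=0$ and which one retains a copy of $f(0,z)$. Substituting $x_2=y_1=y_2=0$ and $x_1=y_1=y_2=0$ kills the left-hand side (since the first argument of $f$ is $0$ and so is the second), yielding $\chi_{1,2}(x)+\chi_{1,1}(x)=0$ and $\chi_{2,2}(x)+\chi_{2,1}(x)=0$. Similarly $x_1=x_2=y_2=0$ gives $\zeta_{2,1}(z)+\zeta_{1,1}(z)=0$. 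The crucial difference appears in the fourth substitution $x_1=x_2=y_1=0$: here the left-hand side is $f(0,z)$ rather than $0$, so I obtain $\zeta_{2,2}(z)+\zeta_{1,2}(z)=f(0,z)$, which is precisely the single altered relation in the statement.

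Then, to establish the product-form decomposition $f_{i,j}(x,z)=\chi_{i,j}(x)+\zeta_{i,j}(z)$, I would evaluate $E$ at the four points with two active variables, $E(x_1,0,y_1,0)$, $E(0,x_2,y_1,0)$, $E(x_1,0,0,y_2)$ and $E(0,x_2,0,y_2)$. Each of these has the left-hand side equal to $f(0,0)=0$ except $E(x_1,0,0,y_2)$ and $E(0,x_2,0,y_2)$, where $y_2$ is active and the first argument of $f$ is still $0$, giving $f(0,y_2)$ on the left. Substituting the already-derived single-variable relations into these four equations should collapse each into the desired additive splitting for the corresponding $f_{i,j}$; the appearance of $f(0,y_2)$ on the left is exactly absorbed by the modified relation $\zeta_{2,2}+\zeta_{1,2}=f(0,z)$, so the bookkeeping remains consistent. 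The converse direction is a routine verification: substituting an arbitrary additive decomposition satisfying the four relations back into the right-hand side telescopes via the $\chi$-relations and the two $\zeta$-relations, leaving exactly $f(0,y_2)$.

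The main obstacle I anticipate is purely combinatorial rather than conceptual: keeping track of which of the eight substitutions produces $0$ and which produces a surviving copy of $f(0,z)$, and then verifying that the single inhomogeneous relation propagates correctly through the two-active-variable equations without introducing any inconsistency. Because only one of the four $\zeta$-relations is altered relative to Proposition~\ref{Prop1}, I expect the computation to go through with no new structural ideas, the delicate point being simply to confirm that the extra term $f(0,z)$ does not over-constrain the system.
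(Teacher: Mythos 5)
Your proposal is correct and follows essentially the same route as the paper: the same auxiliary functions $\chi_{i,j}(x)=f_{i,j}(x,0)$, $\zeta_{i,j}(z)=f_{i,j}(0,z)$, and the same eight substitutions (four with one active variable for the linear relations, four with two active variables for the additive splittings), the only cosmetic difference being that the paper folds $f(0,y_2)$ into the expression $E$ so that $E\equiv 0$. The bookkeeping you worry about does go through exactly as you predict, with $f(0,y_2)$ surviving precisely in the substitutions where only $y_2$ is nonzero among $x_1,x_2,y_1$.
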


\begin{proof}
 For $i, j\in \left\{1, 2\right\}$ let us define the functions $\chi_{i, j}\colon X\to \mathbb{K}$ and $\zeta_{i, j}\colon Y\to \mathbb{K}$ through
 \[
  \chi_{i, j}(x)= f_{i, j}(x, 0)
  \quad
  \text{and}
  \quad
  \zeta_{i, j}(z)= f_{i, j}(0, z)
  \qquad
  \left(x\in X, z\in Y\right).
 \]
Furthermore, let
\begin{multline*}
 E(x_1, x_2, y_1, y_2)
 \\=f(0,y_2)-f_{1,1}(x_1,y_1)-f_{1,2}(x_1,y_2)-f_{2,1}(x_2,y_1)-f_{2,2}(x_2,y_2)
 \qquad
 \left(x_{1}, x_{2}\in X, y_{1}, y_{2}\in Y\right)
\end{multline*}
to obtain the following system of equations
\[
 \begin{array}{rcl}
  E(x_1, 0, y_1, 0)&=&0\\
  E(x_1, 0, 0, y_{2})&=&0\\
  E(0, x_{2}, y_{1}, 0)&=&0\\
  E(0, x_{2}, 0, y_{2})&=&0
 \end{array}
\qquad
 \left(x_{1}, x_{2}\in X, y_{1}, y_{2}\in Y\right),
\]
or equivalently
\[
 \begin{array}{rcl}
  f_{1,1}(x_{1},y_{1})+f_{1,2}(x_{1},0)+f_{2,1}(0, y_{1})&=&0\\
  f_{1,2}(x_{1},y_{2})+f_{1,1}(x_{1},0)+f_{2,2}(0,y_{2})&=&f\left(0, y_{2}\right)\\
  f_{2,1}(x_{2},y_{1})+f_{2,2}(x_{2},0)+f_{1,1}(0, y_{1})&=&0\\
  f_{2,2}(x_{2},y_{2})+f_{2,1}(x_{2},0)+f_{1,2}(0,y_{2})&=&f\left(0  , y_{2}\right)
 \end{array}
 \qquad
 \left(x_{1}, x_{2}\in X, y_{1}, y_{2}\in Y\right).
\]
Finally, the system of equations
\[
 \begin{array}{rcl}
  E(x_{1}, 0, 0, 0)&=&0\\
  E(0, x_{2}, 0, 0)&=&0\\
  E(0, 0, y_{1}, 0)&=&0\\
  E(0, 0, 0, y_{2})&=&0
\end{array}
 \qquad
 \left(x_{1}, x_{2}\in X, y_{1}, y_{2}\in Y\right)
\]
yields that
\[
 \begin{array}{rcl}
  f_{1,2}(x_{1},0)+f_{1,1}(x_{1},0)&=&0\\
  f_{2,2}(x_{2},0)+f_{2,1}(x_{2},0)&=&0\\
  f_{2,1}(0,y_{1})+f_{1,1}(0,y_{1})&=&0\\
  f_{2,2}(0,y_{2})+f_{1,2}(0, y_{2})&=&f\left(0 , y_{2}\right)
   \end{array}
 \qquad
 \left(x_{1}, x_{2}\in X, y_{1}, y_{2}\in Y\right),
\]
which in view of the above definitions completes the proof.
\end{proof}

\subsubsection{The case $\alpha_{1}, \alpha_{2}\neq 0$ and  $\beta_{1}, \beta_{2}=0$}

In such a situation  \eqref{Eq_multi2} implies that 
\[
 f(\alpha_{1}x_{1}+\alpha_{2}x_{2}, 0)= f_{1,1}(x_{1},0)+f_{1,2}(x_{1},0)+f_{2,1}(x_{2},0)+f_{2 ,2}(x_{2},0)
 \qquad
 \left(x_{1}, x_{2}\in X\right)
\]
because the left hand side does not depend on $y_{1}$ and $y_2$.  

Obviously, $\alpha_{1}, \alpha_{2}=1$ can be assumed, otherwise we
consider the functions $\widetilde{f_{1, 2}}, \widetilde{f_{2,
2}}\colon X\times Y\to \mathbb{K}$ defined through
\[
\begin{array}{rcl}
\widetilde{f_{1, 1}}(x, z)&=&f_{1, 1}\left(\frac{x}{\alpha_{1}}, z\right)\\
\widetilde{f_{1, 2}}(x, z)&=&f_{1, 2}\left(\frac{x}{\alpha_{1}}, z\right)\\
 \widetilde{f_{2, 1}}(x, z)&=&f_{2, 1}\left(\frac{x}{\alpha_{2}}, z\right)\\
  \widetilde{f_{2, 2}}(x, z)&=&f_{2, 2}\left(\frac{x}{\alpha_{2}}, z\right)
  \end{array}
  \qquad
  \left(x\in X, z\in Y\right).
\]

\begin{prop}\label{Prop4}
 Let $X$ and $Y$ be linear spaces over the field $\mathbb{K}$ and
 $f, f_{1, 1}, f_{1, 2}, f_{2, 1}, f_{2, 2}\colon X\times Y\to \mathbb{K}$ be functions such that
 \[
  f(x_{1}+x_{2}, 0)=f_{1,1}(x_{1},0)+f_{1,2}(x_{1},0)+f_{2,1}(x_{2},0)+f_{2 ,2}(x_{2},0)
 \qquad
 \left(x_{1}, x_{2}\in X\right).
 \]
 Then and only then there exists an additive function $\chi\colon X\to \mathbb{K}$ such that
 \[
  \begin{array}{rcl}
   f(x, 0)&=&\chi(x)\\
   f_{1, 1}(x, 0)+f_{1, 2}(x, 0)&=&\chi(x)\\
   f_{2, 1}(x, 0)+f_{2, 2}(x, 0)&=&\chi(x)
  \end{array}
\qquad \left(x\in X\right).
\]
\end{prop}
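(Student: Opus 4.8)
The plan is to recognize the stated identity as a Pexider equation in a single variable and to import the classical solution of the latter. Since every unknown function is assumed to vanish at $(0,0)$ (as normalized at the beginning of this section), I would first introduce the three one-variable functions $g, h, k\colon X\to \mathbb{K}$ defined by
\[
 g(x)=f(x, 0), \quad h(x)=f_{1,1}(x, 0)+f_{1,2}(x, 0), \quad k(x)=f_{2,1}(x, 0)+f_{2,2}(x, 0)
\]
for $x\in X$. With these abbreviations the hypothesis reads exactly
\[
 g(x_{1}+x_{2})=h(x_{1})+k(x_{2}) \qquad \left(x_{1}, x_{2}\in X\right),
\]
and the normalization gives $g(0)=h(0)=k(0)=0$.

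Next I would extract the relations among $g$, $h$ and $k$ by the usual Pexider substitutions. Putting $x_{2}=0$ yields $g(x_{1})=h(x_{1})+k(0)=h(x_{1})$, so $g=h$; symmetrically, putting $x_{1}=0$ gives $g=k$. Feeding these back into the equation leaves $g(x_{1}+x_{2})=g(x_{1})+g(x_{2})$, that is, $g$ solves Cauchy's equation and is therefore additive. Setting $\chi:=g$, the three displayed identities of the statement follow at once from $g=h=k$ being additive.

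For the converse I would simply substitute: if an additive $\chi$ with the three listed properties exists, then the right-hand side of the hypothesized equation equals $\chi(x_{1})+\chi(x_{2})=\chi(x_{1}+x_{2})=f(x_{1}+x_{2}, 0)$, using additivity of $\chi$ together with $f(x, 0)=\chi(x)$. I do not expect a genuine obstacle here; the only conceptual point worth flagging is that the four slice functions $f_{i, j}(\cdot, 0)$ are \emph{not} individually determined --- the equation pins down only the two sums $h$ and $k$ --- which is precisely why the conclusion must be phrased in terms of these combinations rather than the $f_{i, j}$ themselves.
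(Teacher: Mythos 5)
Your proposal is correct and coincides with the paper's own argument: the paper likewise introduces $\chi(x)=f(x,0)$, $\varphi(x)=f_{1,1}(x,0)+f_{1,2}(x,0)$, $\psi(x)=f_{2,1}(x,0)+f_{2,2}(x,0)$, observes that they satisfy a Pexider equation and vanish at zero, and concludes $\varphi\equiv\psi\equiv\chi$ with $\chi$ additive. Your version merely spells out the standard substitutions and the converse in more detail.
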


\begin{proof}
Consider the functions $\chi, \varphi, \psi\colon X\to \mathbb{K}$
defined through
\[
 \begin{array}{rcl}
  \chi(x)&=&f(x, 0)\\
  \varphi(x)&=&f_{1, 1}(x, 0)+f_{1, 2}(x, 0)\\
  \psi(x)&=&f_{2, 1}(x, 0)+f_{2, 2}(x, 0)
 \end{array}
\qquad \left(x\in X\right)
\]
to get the following Pexider equation
\[
 \chi(x_{1}+x_{2})=\varphi(x_{1})+\psi(x_{2})
 \qquad
 \left(x_{1}, x_{2}\in X\right).
\]
Since all the functions $\chi, \varphi, \psi$ vanish at zero, we get
that $\varphi\equiv \psi \equiv \chi$ and the function $\chi$ has to
be  additive.
\end{proof}

To finish the discussion of equation \eqref{Eq_multi2} in this special case, apply Proposition \ref{Prop1} to the functions 
\[
\widetilde{f}_{i, j}(x,y)=f_{i, j}(x,y)-f_{i, j}(x,0), 
\qquad 
\left(x\in X, y\in Y\right)
\]
where $f_{i, j}(x,0)$ are given in Proposition \ref{Prop4}.

\subsubsection{The case $\alpha_{1}, \beta_{1}\neq 0$ and  $\alpha_{2}, \beta_{2}=0$}

In such a situation  \eqref{Eq_multi2} implies that 
\[
 f(\alpha_{1}x_{1}, \beta_{1}y_{1})= f_{1,1}(x_{1},y_{1})+f_{1,2}(x_{1},0)+f_{2,1}(0,y_{1})
 \qquad
 \left(x_{1}\in X, y_{1}\in Y\right)
\]
because the left hand side does not depend on $x_2$ and $y_2$. 

Obviously, due to similar reasons as previously, $\alpha_{1},
\beta_{1}=1$ can be assumed. The proof of the following proposition is a straightforward
calculation, so we omit it.

\begin{prop}\label{Prop5}
 Let $X$ and $Y$ be linear spaces over the field $\mathbb{K}$ and
 $f, f_{1, 1}, f_{1, 2}, f_{2, 1}\colon X\times Y\to \mathbb{K}$ be functions.
 Functional equation
 \[
   f(x_{1}, y_{1})= f_{1,1}(x_{1},y_{1})+f_{1,2}(x_{1},0)+f_{2,1}(0,y_{1})
 \qquad
 \left(x_{1}\in X, y_{1}\in Y\right).
 \]
 is fulfilled if and only if there exist functions $\chi\colon X\to \mathbb{K}$ and $\zeta\colon Y\to \mathbb{K}$ such that
 \[
  \begin{array}{rcl}
   f_{1, 2}(x, 0)&=&\chi(x)\\
   f_{2, 1}(0, z)&=&\zeta(z)\\
   f(x, z)-f_{1, 1}(x, z)&=&\chi(x)+\zeta(z)
  \end{array}
\qquad \left(x\in X, z\in Y\right).
 \]
\end{prop}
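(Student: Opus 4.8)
The plan is to recognize that the functional equation is already in essentially the required form, so that the two sought functions can be read off directly without any reduction. Concretely, I would define $\chi\colon X\to\mathbb{K}$ by $\chi(x)=f_{1,2}(x,0)$ and $\zeta\colon Y\to\mathbb{K}$ by $\zeta(z)=f_{2,1}(0,z)$; with these definitions the first two displayed identities hold by construction, and there is nothing further to arrange for them.

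For the necessity direction, I would substitute these definitions into the functional equation. Since the two terms $f_{1,2}(x_{1},0)$ and $f_{2,1}(0,y_{1})$ appearing on the right-hand side are exactly $\chi(x_{1})$ and $\zeta(y_{1})$, the equation reads $f(x_{1},y_{1})=f_{1,1}(x_{1},y_{1})+\chi(x_{1})+\zeta(y_{1})$, and rearranging gives precisely the third identity $f(x,z)-f_{1,1}(x,z)=\chi(x)+\zeta(z)$.

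For the sufficiency direction, I would simply reverse this computation: assuming functions $\chi,\zeta$ satisfying the three identities are given, I insert the first two into the third to obtain $f(x,z)=f_{1,1}(x,z)+f_{1,2}(x,0)+f_{2,1}(0,z)$, which is the functional equation itself, now valid for all $x\in X$ and $z\in Y$.

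I do not expect any genuine obstacle here, which is precisely why the authors describe it as a straightforward calculation. The one point worth recording is the contrast with the earlier degenerate cases: the functions $f_{1,2}$ and $f_{2,1}$ enter the equation only through their restrictions $f_{1,2}(\cdot,0)$ and $f_{2,1}(0,\cdot)$ to the coordinate axes, so all their remaining values are left entirely unconstrained. Consequently the equation determines $\chi$ and $\zeta$ only as those restrictions, exactly as the statement asserts, and in particular no additivity or any further structural property is forced on any of the functions involved.
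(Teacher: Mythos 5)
Your proof is correct and is exactly the straightforward calculation the paper alludes to when it omits the proof: define $\chi$ and $\zeta$ as the restrictions $f_{1,2}(\cdot,0)$ and $f_{2,1}(0,\cdot)$, and both directions follow by direct substitution. Nothing further is needed.
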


To finish the discussion of equation \eqref{Eq_multi2} in this special case, apply Proposition \ref{Prop1} 
to the functions 
\[
\begin{array}{rcl}
\widetilde{f}_{1, 1}(x,y)&=&0,\\
\widetilde{f}_{1,2}(x,y)&=&f_{1,2}(x,y)-f_{1,2}(x,0),\\
\widetilde{f}_{2,1}(x,y)&=&f_{2,1}(x,y)-f_{2,1}(0,y),\\
\widetilde{f}_{2, 2}(x,y)&=&f_{2, 2}(x, y), 
\end{array}
\qquad 
\left(x\in X, y\in Y\right), 
\]
where $f_{1,2}(x,0)$ and $f_{2,1}(0,y)$ were determined in Proposition \ref{Prop5}.

\subsubsection{The case $\alpha_{1}, \alpha_{2}, \beta_{1}\neq 0$ and  $\beta_{2}=0$}

In such a situation  \eqref{Eq_multi2} implies that 
\[
 f(\alpha_{1}x_{1}+\alpha_{2}x_{2}, \beta_{1}y_{1})= f_{1,1}(x_{1},y_{1})+f_{1,2}(x_{1},0)+f_{2,1}(x_{2},y_{1})+f_{2, 2}(x_{2}, 0)
 \qquad
 \left(x_{1}\in X, y_{1}\in Y\right), 
\]
because the left hand side does not depend on $y_2$.  

Obviously, due to similar reasons as previously, $\alpha_{1},
\alpha_{2}, \beta_{1}=1$ can be assumed.

\begin{prop}\label{Prop6}
 Let $X$ and $Y$ be linear spaces over the field $\mathbb{K}$ and
 $f, f_{1, 1}, f_{1, 2}, f_{2, 1}, f_{2, 2}\colon X\times Y\to \mathbb{K}$ be functions.
 Functional equation
 \[
   f(x_{1}+x_{2}, y_{1})= f_{1,1}(x_{1},y_{1})+f_{1,2}(x_{1},0)+f_{2,1}(x_{2},y_{1})+f_{2, 2}(x_{2}, 0)
 \qquad
 \left(x_{1}, x_{2}\in X, y_{1}\in Y\right)
 \]
 is fulfilled if and only if there exist a mapping $A\colon X\times Y\to \mathbb{K}$ additive in its first variable 
 and there are functions $\chi, \chi_{1, 1}, \chi_{2, 1}\colon X\to \mathbb{K}$
 and $\zeta, \zeta_{1, 1}, \zeta_{2, 1}\colon Y\to \mathbb{K}$ vanishing at zero so that $\chi$ is additive and 
 \[
 \begin{array}{rcl}
 f(x, z)&=&A(x, z)+\chi(x)+ \zeta(z)\\
 f_{1, 1}(x, z)&=&A(x, z)+\chi_{1, 1}(x)+ \zeta_{1, 1}(z)\\
 f_{2, 1}(x, z)&=&A(x, z)+\chi_{2, 1}(x)+ \zeta_{2, 1}(z)\\
 \end{array}
 \qquad 
 \left(x\in X, z\in Y\right)
\]
and also 
\[
 \begin{array}{rcl}
 \chi(x)&=&f(x, 0)\\
  \chi_{1, 1}(x)&=&f_{1, 1}(x, 0)\\
  \chi_{2, 1}(x)&=&f_{2, 1}(x, 0)\\
 \zeta(z)&=&\zeta_{1, 1}(z)+\zeta_{2, 1}(z)\\
   f_{1, 2}(x, 0)&=& \chi(x)-\chi_{1, 1}(x)\\
   f_{2, 2}(x, 0)&=& \chi(x)-\chi_{2, 1}(x) 
 \end{array}
\qquad 
\left(x\in X, z\in Y\right)
\]
hold. 
\end{prop}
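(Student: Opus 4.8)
The plan is to prove both implications, treating necessity as the substantive direction and sufficiency as a direct verification. For necessity, assume the functional equation holds. The first step is to isolate the behaviour on the slice $z=0$: substituting $y_{1}=0$ into the equation yields exactly
\[
 f(x_{1}+x_{2}, 0)= f_{1,1}(x_{1},0)+f_{1,2}(x_{1},0)+f_{2,1}(x_{2},0)+f_{2 ,2}(x_{2},0)
 \qquad
 \left(x_{1}, x_{2}\in X\right),
\]
which is precisely the hypothesis of Proposition \ref{Prop4}. Applying that proposition produces an additive function $\chi\colon X\to\mathbb{K}$, namely $\chi(x)=f(x,0)$, with $f_{1,1}(x,0)+f_{1,2}(x,0)=\chi(x)$ and $f_{2,1}(x,0)+f_{2,2}(x,0)=\chi(x)$. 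Setting $\chi_{1,1}(x)=f_{1,1}(x,0)$ and $\chi_{2,1}(x)=f_{2,1}(x,0)$ then immediately gives $f_{1,2}(x,0)=\chi(x)-\chi_{1,1}(x)$ and $f_{2,2}(x,0)=\chi(x)-\chi_{2,1}(x)$, which are four of the six scalar identities in the conclusion.

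The second step removes the $z=0$ contribution. I introduce the reduced functions $F(x,z)=f(x,z)-f(x,0)$, $F_{1,1}(x,z)=f_{1,1}(x,z)-f_{1,1}(x,0)$ and $F_{2,1}(x,z)=f_{2,1}(x,z)-f_{2,1}(x,0)$, all of which vanish on the slice $z=0$. Substituting into the original equation and using the $z=0$ identities from the first step together with the additivity of $\chi$, the terms $\chi(x_{1}),\chi(x_{2})$ and the determined values $f_{1,2}(x_{1},0),f_{2,2}(x_{2},0)$ cancel, leaving the reduced equation
\[
 F(x_{1}+x_{2}, z)= F_{1,1}(x_{1}, z)+F_{2,1}(x_{2}, z)
 \qquad
 \left(x_{1}, x_{2}\in X, z\in Y\right).
\]
This is the heart of the argument: for every fixed $z\in Y$ it is a Pexider equation in the $X$-variables.

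The third step solves this parametrised Pexider equation. Fixing $z$ and substituting first $x_{2}=0$, then $x_{1}=0$, expresses $F_{1,1}$ and $F_{2,1}$ through $F$; resubstitution shows that $A(x,z):=F(x,z)-F_{1,1}(0,z)-F_{2,1}(0,z)$ is additive in its first variable for each fixed $z$. Writing $\zeta_{1,1}(z)=F_{1,1}(0,z)=f_{1,1}(0,z)$, $\zeta_{2,1}(z)=F_{2,1}(0,z)=f_{2,1}(0,z)$ and $\zeta=\zeta_{1,1}+\zeta_{2,1}$, one recovers $F=A+\zeta$, $F_{1,1}=A+\zeta_{1,1}$ and $F_{2,1}=A+\zeta_{2,1}$; adding back the $z=0$ parts yields exactly the claimed representations of $f$, $f_{1,1}$ and $f_{2,1}$ with a common first-variable-additive kernel $A$, together with the remaining identity $\zeta=\zeta_{1,1}+\zeta_{2,1}$. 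The vanishing at zero of $\chi,\chi_{1,1},\chi_{2,1},\zeta,\zeta_{1,1},\zeta_{2,1}$ is inherited from the standing assumption that all the involved functions vanish at $(0,0)$.

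The converse is a one-line computation: inserting the representations into the right-hand side and invoking the additivity of $A$ in its first variable and of $\chi$ collapses it to $A(x_{1},z)+A(x_{2},z)+\chi(x_{1})+\chi(x_{2})+\zeta(z)$, which equals $f(x_{1}+x_{2},z)$; here one notes that only the values $f_{1,2}(x,0)$ and $f_{2,2}(x,0)$ enter the equation, so the off-slice behaviour of $f_{1,2},f_{2,2}$ is unconstrained. There is no deep obstacle in this proof; the only point demanding care is that the Pexider reduction must be performed for each fixed $z$ separately, so that the resulting $A$ is additive \emph{only} in its first variable rather than jointly, and that a single $A$ simultaneously serves all three of $f$, $f_{1,1}$ and $f_{2,1}$.
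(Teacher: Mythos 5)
Your proof is correct and follows essentially the same route as the paper: subtract the $y_{1}=0$ slice to obtain a Pexider equation in the first variable parametrised by $z\in Y$, solve it to produce the common kernel $A$ additive in its first variable, and then recover the slice data. The only difference is cosmetic ordering — you invoke Proposition~\ref{Prop4} up front to get the additivity of $\chi$ and the identities for $f_{1,2}(x,0)$ and $f_{2,2}(x,0)$, whereas the paper derives these same facts at the end by substituting $y_{1}=0$, $x_{2}=0$ and $x_{1}=0$ into the established representations.
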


\begin{proof}
 With the substitution $y_{1}=0$ our equation yields that 
 \[
   f(x_{1}+x_{2}, 0)= f_{1,1}(x_{1}, 0)+f_{1,2}(x_{1},0)+f_{2,1}(x_{2}, 0)+f_{2, 2}(x_{2}, 0)
 \qquad
 \left(x_{1}, x_{2}\in X, \right).
\]
From this we immediately get that 
\[
 \widetilde{f}(x_{1}+x_{2}, y_{1})= \widetilde{f}_{1, 1}(x_{1}, y_{1})+ \widetilde{f}_{2, 1}(x_{2}, y_{1})
 \qquad
 \left(x_{1}, x_{2}\in X, y_{1}\in Y\right), 
\]
where the functions $\widetilde{f}, \widetilde{f}_{1, 1}, \widetilde{f}_{2, 1}\colon X\times Y\to \mathbb{K}$ are defined by 
\[
 \begin{array}{rcl}
  \widetilde{f}(x, y)&=&f(x, y)-f(x, 0)\\
   \widetilde{f}_{1, 1}(x, y)&=&f_{1, 1}(x, y)-f_{1, 1}(x, 0)\\
    \widetilde{f}_{2, 1}(x, y)&=&f_{2, 1}(x, y)-f_{2, 1}(x, 0)\\
 \end{array}
\qquad 
\left(x\in X, y\in Y\right). 
\]
This means that the functions $\widetilde{f}, \widetilde{f}_{1, 1}, \widetilde{f}_{2, 1}$ fulfill a Pexider equation on 
$X$ for any fixed $y\in Y$. Thus there exists a mapping $A\colon X\times Y\to \mathbb{K}$ additive in its first variable and 
functions $\zeta, \zeta_{1, 1}, \zeta_{2, 1}\colon Y\to \mathbb{K}$ so that 
\[
 \zeta(z)=\zeta_{1, 1}(z)+\zeta_{2, 1}(z)
 \qquad 
 \left(z\in Y\right)
\]
and 
\[
 \begin{array}{rcl}
 \widetilde{f}(x, z)&=&A(x, z)+ \zeta(z)\\
 \widetilde{f}_{1, 1}(x, z)&=&A(x, z)+ \zeta_{1, 1}(z)\\
 \widetilde{f}_{2, 1}(x, z)&=&A(x, z)+ \zeta_{2, 2}(z)\\
 \end{array}
 \qquad 
 \left(x\in X, z\in Y\right). 
\]
In terms of the functions $f, f_{1, 1}, f_{2, 1}$ this means that 
\[
 \begin{array}{rcl}
 f(x, z)&=&A(x, z)+\chi(x)+ \zeta(z)\\
 f_{1, 1}(x, z)&=&A(x, z)+\chi_{1, 1}(x)+ \zeta_{1, 1}(z)\\
 f_{2, 1}(x, z)&=&A(x, z)+\chi_{2, 1}(x)+ \zeta_{2, 2}(z)\\
 \end{array}
 \qquad 
 \left(x\in X, z\in Y\right), 
\]
where 
\[
 \begin{array}{rcl}
  \chi(x)&=&f(x, 0)\\
  \chi_{1, 1}(x)&=&f_{1, 1}(x, 0)\\
  \chi_{2, 1}(x)&=&f_{2, 1}(x, 0)
 \end{array}
\qquad
\left(x\in X\right). 
\]
Observe that $\chi$ is additive. 
Indeed, using the above the forms of $f, f_{1, 1}$ and $f_{2, 2}$, our equation with $y_{1}=0$ 
and the fact that $A$ is additive in its first variable, 
we obtain that 
\[
 \chi(x_{1}+x_{2})
 = \chi_{1, 1}(x_{1})+f_{1, 2}(x_{1}, 0)+ \chi_{2, 1}(x_2)+ f_{2, 2}(x_{2}, 0)
 \qquad 
 \left(x_{1}, x_{2}\in X\right), 
\]
that is, $\chi$ fulfills a Pexider equation. Since $\chi(0)=0$, this means that $\chi$ has to be additive. 
Thus, using again the form of the functions $f, f_{1, 1}, f_{2, 1}$ and our equation with $x_{2}=0$,  we get that 
\[
 f_{1, 2}(x, 0)= \chi(x)-\chi_{1, 1}(x) 
 \qquad 
 \left(x\in X\right). 
\]
Similarly, our equation with $x_{1}=0$ implies that 
\[
 f_{2, 2}(x, 0)= \chi(x)-\chi_{2, 1}(x) 
 \qquad 
 \left(x\in X\right). 
\]

\end{proof}

To finish the discussion of equation \eqref{Eq_multi2} in this special case, 
apply Proposition \ref{Prop1} to the functions
\[
\begin{array}{rcl}
\widetilde{f}_{1, 1}(x,y)&=&0,\\
\widetilde{f}_{1,2}(x,y)&=&f_{1,2}(x,y)-f_{1,2}(x,0),\\
\widetilde{f}_{2,1}(x,y)&=&0,\\
\widetilde{f}_{2, 2}(x,y)&=&f_{2,2}(x,y)-f_{2,2}(x, 0), 
\end{array}
\qquad 
\left(x\in X, y\in Y\right)
\]
where $f_{1,2}(x,0)$ and $f_{2,2}(x, 0)$ are given in Proposition \ref{Prop6}.

\subsection{The non-degenerate case}\label{Sec_nondeg}

After making clear the degenerate cases, now we can focus on the
case $\alpha_{1}, \alpha_{2}, \beta_{1}, \beta_{2}\neq 0$ and
provide the general solution of the functional equation
\begin{multline*}
 f\left(\alpha_{1}\,x_{1}+\alpha_{2}\,x_{2} , \beta_{1}\,y_{1}+\beta_{2}\,y_{2}\right)
 \\
 =f_{1,1}(x_{1},y_{1})+f_{1,2}(x_{1},y_{2})+f_{2,1}(x_{2},y_{1})+f_{2 ,2}(x_{2},y_{2})
 \qquad
 \left(x_{1}, x_{2}\in X, y_{1}, y_{2}\in Y\right),
\end{multline*}
where $f, f_{1, 1}, f_{1, 2}, f_{2, 1}, f_{2, 2}\colon X\times Y\to
\mathbb{K}$ denote the unknown functions and $\alpha_{1},
\alpha_{2}, \beta_{1}, \beta_{2}\in \mathbb{K}$ are given constants.

Obviously, it is enough to consider the case
$\alpha_{1}=\alpha_{2}=\beta_{1}=\beta_{2}=1$, that is, to consider
the following functional equation
\[
  f\left(x_{1}+x_{2} , y_{1}+y_{2}\right)
 =f_{1,1}(x_{1},y_{1})+f_{1,2}(x_{1},y_{2})+f_{2,1}(x_{2},y_{1})+f_{2 ,2}(x_{2},y_{2})
 \qquad
 \left(x_{1}, x_{2}\in X, y_{1}, y_{2}\in Y\right).
\]

In this subsection we always assume that the characteristic of the field $\mathbb{K}$ is
\emph{different} from $2$. 
 
\begin{prop}\label{Prop_nondeg}
Let $X$ and $Y$ be linear spaces over the field $\mathbb{K}$. Then
the functions $f, f_{1, 1}, f_{1, 2}, f_{2, 1}, \allowbreak f_{2,
2}\colon \allowbreak X\times Y\to \mathbb{K}$ satisfy functional
equation
\begin{equation}\label{Eq_non-deg}
  f\left(x_{1}+x_{2} , y_{1}+y_{2}\right)
 =f_{1,1}(x_{1},y_{1})+f_{1,2}(x_{1},y_{2})+f_{2,1}(x_{2},y_{1})+f_{2 ,2}(x_{2},y_{2})
 \qquad
 \left(x_{1}, x_{2}\in X, y_{1}, y_{2}\in Y\right).
\end{equation}
 if and only if
 \[
  \begin{array}{rcl}
   f(x, z)&=&A(x, z)+\chi(x)+\zeta(z)\\
    f_{i, j}(x, z)&=&A(x, z)+\chi_{i, j}(x)+\zeta_{i, j}(z)
  \end{array}
\qquad \left(x\in X, z\in Z\right),
 \]
where the mapping $A\colon X\times Y\to \mathbb{K}$ is a
bi-additive function and for $i, j\in \left\{1, 2\right\}$ $\chi,
\chi_{i, j}\colon X\to \mathbb{K}$ as well as $\zeta, \zeta_{i,
j}\colon Y\to \mathbb{K}$ are functions such that $\chi$ and $\zeta$
are additive functions and $\chi_{i,j}$ and $\zeta_{i,j}$ vanish at the point $(0, 0)$ and 
\[
 \begin{array}{rcl}
  \chi(x)&=&\chi_{1, 1}(x)+\chi_{1, 2}(x)=\chi_{2, 1}(x)+\chi_{2, 2}(x)\\
  \zeta(z)&=& \zeta_{1, 1}(z)+\zeta_{2, 1}(z)=\zeta_{1, 2}(z)+\zeta_{2, 2}(z)
 \end{array}
\qquad \left(x\in X, z\in Y\right)
\]
are also fulfilled.
 \end{prop}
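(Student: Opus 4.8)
The plan is to peel off the single-variable ``marginal'' parts of every function first, reduce to an equation whose solutions all coincide, and finally recognize the common value as a bi-additive map. Throughout I would write $\chi_{i,j}(x)=f_{i,j}(x,0)$, $\zeta_{i,j}(z)=f_{i,j}(0,z)$, and $\chi(x)=f(x,0)$, $\zeta(z)=f(0,z)$; all of these vanish at $0$ because of the standing normalization $f(0,0)=f_{i,j}(0,0)=0$.

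First I would substitute $y_{1}=y_{2}=0$ into \eqref{Eq_non-deg}. The right-hand side then only sees the values $f_{i,j}(\cdot,0)$, and the equation becomes the Pexider equation
\[
\chi(x_{1}+x_{2})=\bigl(\chi_{1,1}(x_{1})+\chi_{1,2}(x_{1})\bigr)+\bigl(\chi_{2,1}(x_{2})+\chi_{2,2}(x_{2})\bigr).
\]
By Proposition \ref{Prop4} (equivalently, the vanishing-at-zero Pexider argument used there) this forces $\chi$ to be additive and $\chi=\chi_{1,1}+\chi_{1,2}=\chi_{2,1}+\chi_{2,2}$. Substituting $x_{1}=x_{2}=0$ gives the symmetric conclusion: $\zeta$ is additive and $\zeta=\zeta_{1,1}+\zeta_{2,1}=\zeta_{1,2}+\zeta_{2,2}$. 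These are exactly the compatibility relations asserted in the statement.

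Next I would strip off the marginals by setting $g(x,z)=f(x,z)-\chi(x)-\zeta(z)$ and $g_{i,j}(x,z)=f_{i,j}(x,z)-\chi_{i,j}(x)-\zeta_{i,j}(z)$, so that each $g$ and $g_{i,j}$ vanishes whenever one of its two arguments is $0$. Inserting these into \eqref{Eq_non-deg} and using the additivity of $\chi,\zeta$ together with the two relations just obtained, all the $\chi$- and $\zeta$-terms cancel and one is left with
\[
g(x_{1}+x_{2},y_{1}+y_{2})=g_{1,1}(x_{1},y_{1})+g_{1,2}(x_{1},y_{2})+g_{2,1}(x_{2},y_{1})+g_{2,2}(x_{2},y_{2}).
\]
The decisive step is then to annihilate three of the four right-hand terms at once: taking $x_{2}=y_{2}=0$ kills $g_{1,2},g_{2,1},g_{2,2}$ (each now has a zero argument) and yields $g=g_{1,1}$, and the three remaining choices of which pair of variables to zero out give $g=g_{1,2}$, $g=g_{2,1}$, $g=g_{2,2}$ in precisely the same way. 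Hence all five reduced functions are one and the same map, which I name $A:=g$.

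Feeding $g_{i,j}=A=g$ back into the reduced equation leaves the single identity
\[
A(x_{1}+x_{2},y_{1}+y_{2})=A(x_{1},y_{1})+A(x_{1},y_{2})+A(x_{2},y_{1})+A(x_{2},y_{2}),
\]
and since $A(x,0)=A(0,z)=0$, putting $y_{2}=0$ shows $A$ is additive in its first slot while $x_{2}=0$ shows it is additive in the second; thus $A$ is bi-additive. Unwinding the substitutions gives $f=A+\chi+\zeta$ and $f_{i,j}=A+\chi_{i,j}+\zeta_{i,j}$, the asserted form. The converse is a direct check: bi-additivity of $A$ reproduces the four $A$-terms on the right, while the relations $\chi=\chi_{1,1}+\chi_{1,2}=\chi_{2,1}+\chi_{2,2}$ and $\zeta=\zeta_{1,1}+\zeta_{2,1}=\zeta_{1,2}+\zeta_{2,2}$ reassemble $\chi(x_{1}+x_{2})$ and $\zeta(y_{1}+y_{2})$. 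I expect the only real effort to lie in the bookkeeping of the cancellation producing the $g$-equation and in the clean recognition that the annihilation substitutions collapse every $g_{i,j}$ onto $g$; I note that this argument never divides by $2$, so I would double-check whether the standing hypothesis $\mathrm{char}(\mathbb{K})\neq 2$ is genuinely needed for this particular statement or is merely inherited from the subsection.
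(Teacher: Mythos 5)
Your proof is correct, but it is organized differently from the paper's. The paper first pins down $f$ alone: substituting $y_{2}=0$ and then $x_{2}=0$ it writes $f=A+\chi+\zeta$ with $A$ bi-additive, and only afterwards handles the $f_{i,j}$ by forming $F_{i,j}=f_{i,j}-A-\tfrac{\chi}{2}-\tfrac{\zeta}{2}$, observing that these satisfy the homogeneous equation \eqref{Eq_deg2}, and invoking Proposition \ref{Prop1}. You instead extract the marginal relations $\chi=\chi_{1,1}+\chi_{1,2}=\chi_{2,1}+\chi_{2,2}$ and $\zeta=\zeta_{1,1}+\zeta_{2,1}=\zeta_{1,2}+\zeta_{2,2}$ up front from the two diagonal substitutions, subtract each function's \emph{own} marginals, and then show by zeroing pairs of variables that all five reduced functions coincide with a single map $A$, whose bi-additivity falls out of the reduced equation. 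The two payoffs of your route are that it bypasses Proposition \ref{Prop1} entirely and, more substantively, that your final observation is right: the paper's division of $\chi$ and $\zeta$ by $2$ is the only place $\mathrm{char}(\mathbb{K})\neq 2$ enters its argument, and since you split off $\chi_{i,j}+\zeta_{i,j}$ rather than $\tfrac{\chi}{2}+\tfrac{\zeta}{2}$, no division occurs anywhere in your proof; the proposition therefore holds over fields of characteristic $2$ as well, and the hypothesis is indeed only inherited from the subsection's blanket assumption rather than forced by this statement.
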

 \begin{proof}
  Assume that the functions $f, f_{1, 1}, f_{1, 2}, f_{2, 1}, \allowbreak f_{2, 2}\colon \allowbreak X\times Y\to \mathbb{K}$
  fulfill functional equation \eqref{Eq_non-deg} for any $x_{1}, x_{2}\in X$ and $y_{1}, y_{2}\in Y$.
  With the substitution $y_{2}=0$  we obtain that
  \[
   f\left(x_{2}+x_{1} , y_{1}\right)=
   f_{2,1}( x_{2},y_{1})+f_{2,2}(x_{2},0)+f_{1,1}(x_{1}, y_{1})+f_{1,2}(x_{1},0)
   \qquad
   \left(x_{1}, x_{2}\in X, y_{1}\in Y\right),
  \]
which immediately implies that
\[
 f\left(x_{2}+x_{1} , y_{1}\right)=
  \widetilde{f_{2,1}}( x_{2},y_{1})+\widetilde{f_{1,1}}(x_{1}, y_{1})
  \qquad
   \left(x_{1}, x_{2}\in X, y_{1}\in Y\right),
\]
where the functions $\widetilde{f_{1, 1}}, \widetilde{f_{2, 1}}$ are
defined by
\[
 \begin{array}{rcl}
  \widetilde{f_{1, 1}}(x, z)&=& f_{1, 1}(x, z)+f_{1, 2}(x, 0)\\
  \widetilde{f_{2, 1}}(x, z)&=&f_{2, 1}(x, z)+f_{2, 2}(x, 0)
 \end{array}
\qquad \left(x\in X, z\in Y\right).
\]
This means that there exists a function $A^{(1)}\colon X\times Y\to
\mathbb{K}$ which is additive in its first variable and a function
$\zeta\colon Y\to \mathbb{K}$ vanishing at zero such that
\[
 f(x, z)= A^{(1)}(x, z)+\zeta(z)
 \qquad
 \left(x\in X, z\in Y\right).
\]
Substituting this form into equation \eqref{Eq_non-deg}, with
$x_{2}=0$ and with a similar argument we receive that
\[
 A^{(1)}(x, z)= A(x, z)+\chi(x)
 \qquad
 \left(x\in X, z\in Y\right),
\]
where $A\colon X\times Y\to \mathbb{K}$ is a bi-additive mapping and
$\chi\colon X\to \mathbb{K}$ is a function that vanishes at zero.

All in all this means that
\[
 f(x, z)= A(x, z)+\chi(x)+\zeta(z)
 \qquad
 \left(x\in X, z\in Y\right).
\]
Additionally, equation \eqref{Eq_non-deg}, first with
$y_{1}=y_{2}=0$ yields that $\chi$ has to be additive and secondly, 
with $x_{1}=x_{2}=0$ we receive that the function $\zeta$ also has
to be additive, too.

Define the functions $F_{1, 1}, F_{1, 2}, F_{2, 1}, F_{2, 2}$ on
$X\times Y$ through
\[
 \begin{array}{rcl}
  F_{1, 1}(x, z)&=&f_{1, 1}(x, z)-A(x, z)-\dfrac{\chi(x)}{2}-\dfrac{\zeta(z)}{2}\\[3mm]
  F_{1, 2}(x, z)&=&f_{1, 2}(x, z)-A(x, z)-\dfrac{\chi(x)}{2}-\dfrac{\zeta(z)}{2}\\[3mm]
  F_{2, 1}(x, z)&=&f_{2, 1}(x, z)-A(x, z)-\dfrac{\chi(x)}{2}-\dfrac{\zeta(z)}{2}\\[3mm]
   F_{2, 2}(x, z)&=&f_{2, 2}(x, z)-A(x, z)-\dfrac{\chi(x)}{2}-\dfrac{\zeta(z)}{2}\\
 \end{array}
 \qquad
 \left(x\in X, z\in Y\right)
\]
to deduce that they fulfill functional equation \eqref{Eq_deg2}. Due
to Proposition \ref{Prop1}, for all $i, j=1, 2$ there exist
functions $\widetilde{\chi}_{i, j}\colon X\to \mathbb{K}$ and $\widetilde{\zeta}_{i,j}\colon Y\to \mathbb{K}$ vanishing at zero such that
\[
 F_{i, j}(x, z)= \widetilde{\chi}_{i, j}(x)+\widetilde{\zeta}_{i, j}(z)
 \qquad
 \left(x\in X, z\in Y, i, j=1, 2\right),
\]
that is, for the functions $f_{i,j}$ we have
\[
  f_{i, j}(x, z)=A(x, z)+\chi_{i, j}(x)+\zeta_{i, j}(z)
\qquad \left(i, j\in \left\{1, 2\right\}, x\in X, z\in Y\right).
\]
Finally, using the equations in Proposition \ref{Prop1} 
for the functions $\widetilde{\chi}_{i, j}$ and $\widetilde{\zeta}_{i, j}$, identities
\[
 \begin{array}{rcl}
  f(x, 0)&=&\chi(x)= f_{1, 1}(x, 0)+f_{1, 2}(x, 0)=\chi_{1, 1}(x)+\chi_{1, 2}(x)\\
  f(x, 0)&=&\chi(x)=f_{2, 1}(x, 0)+f_{2, 2}(x, 0)=\chi_{2, 1}(x)+\chi_{2, 2}(x)\\
  f(0, z)&=&\zeta(z)=f_{1, 1}(0, z)+f_{2, 1}(0, z)=\zeta_{1, 1}(z)+\zeta_{2, 1}(z)\\
  f(0, z)&=&\zeta(z)= f_{1, 2}(0, z)+f_{2, 2}(0, z)=\zeta_{1, 2}(z)+\zeta_{2, 2}(z)
 \end{array}
\qquad \left(x\in X, z\in Y\right)
\]
complete the proof.
 \end{proof}

\subsection{Related equations}

\subsubsection{The functional equation of bi-additivity}

As a trivial consequence of the results of the previous section we
get the following.

\begin{cor}
 Let $X$ and $Y$ be linear spaces over the field $\mathbb{K}$ with $\mathrm{char}(\mathbb{K})\neq 2$. 
 The mapping $f\colon X\times Y\to \mathbb{K}$ fulfills the functional equation of bi-additivity, that is,
 \[
  f\left(x_{1}+x_{2} , y_{1}+y_{2}\right)
 \\
 =f(x_{1},y_{1})+f(x_{1},y_{2})+f(x_{2},y_{1})+f(x_{2},y_{2})
 \qquad
 \left(x_{1}, x_{2}\in X, y_{1}, y_{2}\in Y\right)
 \]
 if and only if $f$ is bi-additive.
\end{cor}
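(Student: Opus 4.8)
The plan is to read this corollary directly off Proposition~\ref{Prop_nondeg} by specializing all five unknown functions to the single function $f$. The functional equation of bi-additivity is precisely \eqref{Eq_non-deg} in the case $f_{1,1}=f_{1,2}=f_{2,1}=f_{2,2}=f$, so the hypothesis places us exactly within the scope of that proposition; this is also why the standing assumption $\mathrm{char}(\mathbb{K})\neq 2$ is inherited here.

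First I would dispose of the easy implication: if $f$ is bi-additive, then expanding $f(x_{1}+x_{2},y_{1}+y_{2})$ by additivity in each slot produces exactly the four terms on the right-hand side, so the equation holds (and a bi-additive map automatically vanishes at $(0,0)$, consistently with the normalization).

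For the converse, applying Proposition~\ref{Prop_nondeg} with $f_{i,j}=f$ yields simultaneously
\[
 f(x,z)=A(x,z)+\chi(x)+\zeta(z)
 \quad\text{and}\quad
 f(x,z)=A(x,z)+\chi_{i,j}(x)+\zeta_{i,j}(z)
\]
for every $i,j\in\{1,2\}$, where $A$ is bi-additive, $\chi,\zeta$ are additive, and the pieces $\chi_{i,j},\zeta_{i,j}$ vanish at $0$. Subtracting the two representations gives $\chi(x)+\zeta(z)=\chi_{i,j}(x)+\zeta_{i,j}(z)$; evaluating at $z=0$ forces $\chi_{i,j}=\chi$ and evaluating at $x=0$ forces $\zeta_{i,j}=\zeta$, where the vanishing of all one-variable pieces at $0$ is what makes this clean.

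It then remains to feed these identities into the compatibility relations of Proposition~\ref{Prop_nondeg}. The relation $\chi(x)=\chi_{1,1}(x)+\chi_{1,2}(x)$ becomes $\chi=2\chi$, hence $\chi\equiv 0$, and likewise $\zeta(z)=\zeta_{1,1}(z)+\zeta_{2,1}(z)$ gives $\zeta\equiv 0$. Therefore $f(x,z)=A(x,z)$ with $A$ bi-additive, which is the claim. There is no genuine obstacle in this argument, since all the content already resides in Proposition~\ref{Prop_nondeg}; the only step requiring a little care is the uniqueness argument identifying each $\chi_{i,j}$ with $\chi$ and each $\zeta_{i,j}$ with $\zeta$, which is exactly where the normalization ``vanishing at $0$'' is used.
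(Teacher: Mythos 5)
Your argument is correct and is exactly the derivation the paper intends: the corollary is stated there as an immediate consequence of Proposition~\ref{Prop_nondeg}, obtained by specializing $f_{i,j}=f$, identifying $\chi_{i,j}=\chi$ and $\zeta_{i,j}=\zeta$ via the normalization at $0$, and using $\mathrm{char}(\mathbb{K})\neq 2$ to kill $\chi$ and $\zeta$ from the relations $\chi=2\chi$, $\zeta=2\zeta$. Your remark that the trivial direction and the vanishing at $(0,0)$ are consistent with the standing normalization is also the right thing to check, so nothing is missing.
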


\subsubsection{The rectangle equation}

Let $X$ and $Y$ be linear spaces over the field $\mathbb{K}$ and 
let $f\colon X\times Y\to \mathbb{K}$ be a function.

Then functional equation
\[
 f(x+u, y+v)+f(x+u, y-v)+f(x-u, y+v)+f(x-u, y-v)= 4f(x, y)
 \qquad
 \left(x, y\in X, u, v\in Y\right),
\]
or equivalently (provided that $\mathrm{char}(\mathbb{K})\neq 2$)
\[
  4f\left(\frac{x_{1}+x_{2}}{2} , \frac{y_{1}+y_{2}}{2}\right)
 =f(x_{1},y_{1})+f(x_{1},y_{2})+f(x_{2},y_{1})+f(x_{2},y_{2})
 \qquad
 \left(x_{1}, x_{2}\in X, y_{1}, y_{2}\in Y\right).
\]
is called the \emph{rectangle equation}.

Indeed, both the above equations express the following: the value of
$f$ at the center of any rectangle, with parallel sides to the coordinate
axes, equals the mean of the values of $f$ at the vertices.

\begin{center}
\begin{picture}(0,0)%
\includegraphics{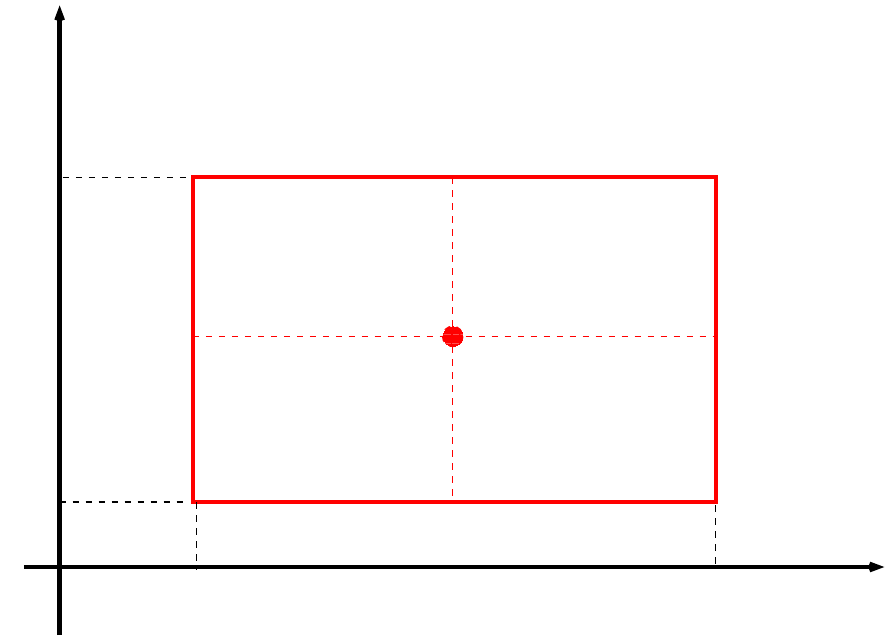}%
\end{picture}%
\setlength{\unitlength}{1367sp}%
\begingroup\makeatletter\ifx\SetFigFont\undefined%
\gdef\SetFigFont#1#2#3#4#5{%
  \reset@font\fontsize{#1}{#2pt}%
  \fontfamily{#3}\fontseries{#4}\fontshape{#5}%
  \selectfont}%
\fi\endgroup%
\begin{picture}(12309,8838)(31,-8260)
\put( 46,-1951){\makebox(0,0)[lb]{\smash{{\SetFigFont{8}{9.6}{\rmdefault}{\mddefault}{\updefault}{\color[rgb]{0,0,0}$y_{2}$}%
}}}}
\put( 46,-6406){\makebox(0,0)[lb]{\smash{{\SetFigFont{8}{9.6}{\rmdefault}{\mddefault}{\updefault}{\color[rgb]{0,0,0}$y_{1}$}%
}}}}
\put(2701,-8161){\makebox(0,0)[lb]{\smash{{\SetFigFont{8}{9.6}{\rmdefault}{\mddefault}{\updefault}{\color[rgb]{0,0,0}$x_{1}$}%
}}}}
\put(9946,-8116){\makebox(0,0)[lb]{\smash{{\SetFigFont{8}{9.6}{\rmdefault}{\mddefault}{\updefault}{\color[rgb]{0,0,0}$x_{2}$}%
}}}}
\put(6481,-4561){\makebox(0,0)[lb]{\smash{{\SetFigFont{8}{9.6}{\rmdefault}{\mddefault}{\updefault}{\color[rgb]{1,0,0}$\left(\frac{x_{1}+x_{2}}{2}, \frac{y_{1}+y_{2}}{2}\right)$}%
}}}}
\put(11971,-6856){\makebox(0,0)[lb]{\smash{{\SetFigFont{8}{9.6}{\rmdefault}{\mddefault}{\updefault}{\color[rgb]{0,0,0}$x$}%
}}}}
\put(1306,299){\makebox(0,0)[lb]{\smash{{\SetFigFont{8}{9.6}{\rmdefault}{\mddefault}{\updefault}{\color[rgb]{0,0,0}$y$}%
}}}}
\end{picture}%

\end{center}

This equation as well as its generalization were investigated (among
others) in \cite{AczHarMcKSak68, ChuEbaNgSahZen95, Ste99}.

With the aid of the results of the previous section, we obtain the
following straightaway.

\begin{prop}\label{T_rectangle}
 Let $X$ and $Y$ be linear spaces over the field $\mathbb{K}$ with  $\mathrm{char}(\mathbb{K})\neq 2$ and
$f\colon X\times Y\to \mathbb{K}$ be a function. The function
$f$  fulfills the rectangle equation, i.e.,
\[
  4f\left(\frac{x_{1}+x_{2}}{2} , \frac{y_{1}+y_{2}}{2}\right)
 =f(x_{1},y_{1})+f(x_{1},y_{2})+f(x_{2},y_{1})+f(x_{2},y_{2})
 \qquad
 \left(x_{1}, x_{2}\in X, y_{1}, y_{2}\in Y\right),
\]
if and only if there exists a bi-additive mapping $A\colon X\times
Y\to \mathbb{K}$ and additive functions $\chi\colon X\to \mathbb{K}$
and $\zeta\colon Y\to \mathbb{K}$  such that
\[
 f(x, z)= A(x, z)+\chi(x)+\zeta(z)
 \qquad
 \left(x\in X, z\in Y\right).
\]
\end{prop}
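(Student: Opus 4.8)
The plan is to recognize the rectangle equation as a disguised instance of the non-degenerate equation \eqref{Eq_non-deg} and then to invoke Proposition \ref{Prop_nondeg}. Throughout I keep the standing normalization of Section \ref{Sec_n2}, so I may assume $f(0,0)=0$; this is consistent with the asserted representation, since any $f$ of the form $A+\chi+\zeta$ with $A$ bi-additive and $\chi,\zeta$ additive automatically vanishes at the origin. Because $\mathrm{char}(\mathbb{K})\neq 2$, the element $\tfrac12$ lies in $\mathbb{K}$, so I can define $g\colon X\times Y\to\mathbb{K}$ by $g(u,v)=4f\left(\tfrac{u}{2},\tfrac{v}{2}\right)$ and set $f_{1,1}=f_{1,2}=f_{2,1}=f_{2,2}:=f$. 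With these choices the rectangle equation is word for word equation \eqref{Eq_non-deg} for the quintuple $(g,f,f,f,f)$, and $g(0,0)=4f(0,0)=0$.

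Applying Proposition \ref{Prop_nondeg} to this quintuple then produces a bi-additive map $A\colon X\times Y\to\mathbb{K}$, additive functions $\chi\colon X\to\mathbb{K}$ and $\zeta\colon Y\to\mathbb{K}$, and functions $\chi_{i,j},\zeta_{i,j}$ vanishing at the origin such that
\[
g(x,z)=A(x,z)+\chi(x)+\zeta(z), \qquad f(x,z)=A(x,z)+\chi_{i,j}(x)+\zeta_{i,j}(z) \quad (i,j=1,2),
\]
subject to the compatibility relations $\chi=\chi_{1,1}+\chi_{1,2}=\chi_{2,1}+\chi_{2,2}$ and $\zeta=\zeta_{1,1}+\zeta_{2,1}=\zeta_{1,2}+\zeta_{2,2}$. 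The crucial point is that the four ``corner'' functions are here one and the same function $f$, so that $\chi_{i,j}(x)+\zeta_{i,j}(z)=f(x,z)-A(x,z)$ is independent of $(i,j)$. Putting $z=0$ and using $\zeta_{i,j}(0)=0$ forces all the $\chi_{i,j}$ to coincide, and putting $x=0$ forces all the $\zeta_{i,j}$ to coincide; write $\chi_{0},\zeta_{0}$ for these common values. The compatibility relations then collapse to $\chi=2\chi_{0}$ and $\zeta=2\zeta_{0}$, whence $\chi_{0}=\tfrac12\chi$ and $\zeta_{0}=\tfrac12\zeta$ are again additive. This gives $f(x,z)=A(x,z)+\tfrac12\chi(x)+\tfrac12\zeta(z)$, which is exactly the claimed form after renaming $\tfrac12\chi,\tfrac12\zeta$.

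For the converse I would simply substitute $f=A+\chi+\zeta$ into the rectangle equation and verify it by direct computation: bi-additivity of $A$ yields $A\left(\tfrac{x_{1}+x_{2}}{2},\tfrac{y_{1}+y_{2}}{2}\right)=\tfrac14\sum_{i,j}A(x_{i},y_{j})$, while additivity together with $\mathrm{char}(\mathbb{K})\neq 2$ (so that $\chi(\tfrac{x}{2})=\tfrac12\chi(x)$ by the $\mathbb{Z}$-homogeneity of additive maps recorded earlier) disposes of the two single-variable terms. I expect the only genuinely delicate step to be the bookkeeping of the second paragraph, namely justifying that the per-corner data $\chi_{i,j},\zeta_{i,j}$ emerging from Proposition \ref{Prop_nondeg} may be amalgamated into a single additive pair; everything else is purely formal once the identification with \eqref{Eq_non-deg} is in place.
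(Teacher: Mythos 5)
Your proposal is correct and follows exactly the route the paper intends: the paper offers no written proof beyond ``with the aid of the results of the previous section, we obtain the following straightaway,'' and your reduction $g(u,v)=4f\left(\tfrac{u}{2},\tfrac{v}{2}\right)$ with all four corner functions equal to $f$, followed by an application of Proposition \ref{Prop_nondeg}, is precisely that argument with the details supplied. The amalgamation step (all $\chi_{i,j}$ coincide with $f(\cdot,0)$, all $\zeta_{i,j}$ with $f(0,\cdot)$, hence $\chi_{0}=\tfrac12\chi$ and $\zeta_{0}=\tfrac12\zeta$ are additive) and the converse computation are both sound, granting the paper's standing normalization $f(0,0)=0$.
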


\begin{rem}
 In case $\mathrm{char}(\mathbb{K})=2$, the rectangle equation reduces to 
 equation
 \[
 f(x+u, y+v)+f(x+u, y-v)+f(x-u, y+v)+f(x-u, y-v)= 0
 \qquad
 \left(x, y\in X, u, v\in Y\right),
\]
or equivalently 
\[
 f(x_{1},y_{1})+f(x_{1},y_{2})+f(x_{2},y_{1})+f(x_{2},y_{2})=0
 \qquad
 \left(x_{1}, x_{2}\in X, y_{1}, y_{2}\in Y\right).
\]
Thus, Proposition \ref{Prop1} yields that there exist functions 
$\chi\colon X\to \mathbb{K}$ and $\zeta\colon Y\to \mathbb{K}$ such that 
\[
 f(x, y)= \chi(x)+\zeta(y) 
 \qquad 
 \left(x\in X, y\in Y\right). 
\]
\end{rem}

\subsubsection*{The Cauchy equation on $X\times Y$}

\begin{prop}
 Let $X$ and $Y$ be linear spaces over the field $\mathbb{K}$ and
$f, g, h\colon X\times Y\to \mathbb{K}$ be functions. Then
functional equation
\begin{equation}
 f(x_{1}+x_{2}, y_{1}+y_{2})= g(x_{1}, y_{1})+h(x_{2}, y_{2})
 \qquad
 \left(x_{1}, x_{2}\in X, y_{1}, y_{2}\in Y\right)
\end{equation}
holds if and only if there exist additive functions $\chi \colon
X\to \mathbb{K}$, $\zeta\colon Y\to \mathbb{K}$ 
such that
\[
 \begin{array}{rcl}
  f(x, y)&=&\chi(x)+\zeta(z) \\
  g(x, y)&=&\chi(x)+\zeta(z) \\
  h(x, y)&=&\chi(x)+\zeta(z) \\
 \end{array}
\qquad \left(x\in X, z\in Y\right).
\]
\end{prop}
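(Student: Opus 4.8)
The plan is to read the equation as a Pexider equation on the product group $G=X\times Y$. Writing $\mathbf{u}=(x_{1},y_{1})$ and $\mathbf{v}=(x_{2},y_{2})$, the hypothesis becomes
\[
 f(\mathbf{u}+\mathbf{v})=g(\mathbf{u})+h(\mathbf{v})
 \qquad
 \left(\mathbf{u},\mathbf{v}\in G\right),
\]
which is exactly the additive Pexider equation on the abelian group $G$. The converse (sufficiency) direction is immediate: if $f=g=h=\chi+\zeta$ with $\chi,\zeta$ additive, then both sides reduce to $\chi(x_{1})+\chi(x_{2})+\zeta(y_{1})+\zeta(y_{2})$ by additivity, so nothing beyond expanding is required.

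For the necessity direction I would first strip off $g$ and $h$. Substituting $\mathbf{v}=(0,0)$ gives $f(\mathbf{u})=g(\mathbf{u})+h(0,0)$, and substituting $\mathbf{u}=(0,0)$ gives $f(\mathbf{v})=g(0,0)+h(\mathbf{v})$; hence $g$ and $h$ agree with $f$ up to the additive constants $h(0,0)$ and $g(0,0)$. Feeding these relations back into the equation yields
\[
 f(\mathbf{u}+\mathbf{v})=f(\mathbf{u})+f(\mathbf{v})-\bigl(g(0,0)+h(0,0)\bigr)
 \qquad
 \left(\mathbf{u},\mathbf{v}\in G\right),
\]
and since the same substitution shows $g(0,0)+h(0,0)=f(0,0)$, the function $F:=f-f(0,0)$ satisfies Cauchy's equation $F(\mathbf{u}+\mathbf{v})=F(\mathbf{u})+F(\mathbf{v})$ on $G$. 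Thus $F$ is additive on $X\times Y$, and under the standing normalization at $(0,0)$ (so that $f(0,0)=g(0,0)=h(0,0)=0$) we obtain $f=g=h=F$.

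The one step that genuinely uses the product structure is the decomposition of the additive map $F$ on $G=X\times Y$ into a contribution from each factor. Since $(x,y)=(x,0)+(0,y)$ and $F$ is additive, $F(x,y)=F(x,0)+F(0,y)$; setting $\chi(x):=F(x,0)$ and $\zeta(y):=F(0,y)$ produces additive functions $\chi\colon X\to\K$ and $\zeta\colon Y\to\K$ with $F(x,y)=\chi(x)+\zeta(y)$, which is the asserted form. I do not expect any real obstacle here, as the whole argument is a characteristic-free Pexider computation, so, unlike Proposition \ref{Prop_nondeg}, it requires no assumption on $\mathrm{char}(\K)$. One could alternatively view the statement as the special case of \eqref{Eq_non-deg} in which the two mixed terms $f_{1,2},f_{2,1}$ vanish identically while $f_{1,1}=g$ and $f_{2,2}=h$; forcing those mixed terms to be zero in Proposition \ref{Prop_nondeg} collapses the bi-additive part $A$ to $0$ and recovers the same conclusion, albeit at the cost of the superfluous hypothesis $\mathrm{char}(\K)\neq2$. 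The only bookkeeping subtlety is tracking the constants $g(0,0)$ and $h(0,0)$, which the normalization removes.
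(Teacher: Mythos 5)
Your proof is correct and complete. The paper itself states this proposition without any proof, placing it among the ``related equations'' that are presumably meant to be read off from Proposition \ref{Prop_nondeg}; so there is no argument of the authors to compare against line by line. Your direct route --- folding the two variables into the product group $G=X\times Y$, running the standard Pexider reduction to Cauchy's equation for $F=f-f(0,0)$, and then splitting the additive map $F$ along the decomposition $(x,y)=(x,0)+(0,y)$ to get $\chi$ and $\zeta$ --- is the natural one, and it is genuinely preferable to deducing the statement from Proposition \ref{Prop_nondeg}: as you note, specializing \eqref{Eq_non-deg} with $f_{1,2}=f_{2,1}=0$ does force the bi-additive part $A$ to vanish and recovers the conclusion, but only under the superfluous hypothesis $\mathrm{char}(\mathbb{K})\neq 2$, whereas your argument is characteristic-free and matches the fact that the proposition as stated carries no characteristic assumption. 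Your handling of the constants is also the right reading of the statement: without the standing normalization $f(0,0)=g(0,0)=h(0,0)=0$ imposed at the start of Section \ref{Sec_n2}, the ``only if'' direction would literally fail (e.g.\ $f\equiv 0$, $g\equiv 1$, $h\equiv -1$), so making that dependence explicit is a point in your favour rather than a gap.
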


\section{On the reduction of equations with $n>2$ to  the two-variable case  }

In this section we intend to investigate the following problem. Let
$X$ and $Y$ be linear spaces over the field $\mathbb{K}$, let
further $\alpha_{i}, \beta_{i}\in \mathbb{K}$, $i=1, \ldots, n$ be
arbitrarily fixed constants. Assume further that for the functions
$f, f_{i, j}\colon X\times Y\to \mathbb{K}$, $i, j=1, \ldots, n$,
functional equation
\begin{equation}\label{Eq_n}
 f\left(\sum_{i=1}^{n}\alpha_{i}x_{i}, \sum_{i=1}^{n}\beta_{i}y_{i}\right)= \sum_{i, j=1}^{n}f_{i, j}(x_{i}, y_{j})
 \qquad
 \left(x_{i}\in X, y_{i}\in Y, i=1, \ldots, n\right)
\end{equation}
is fulfilled.

We will show that in case $n>2$, the results of the previous section can be applied. 
Indeed, let $\lambda, \kappa, \mu, \nu \in \left\{1, \ldots,
n\right\}$ such that $\lambda\neq \kappa$ and $\mu\neq \nu$, but
otherwise arbitrary. In this case equation \eqref{Eq_n} with the
substitutions
\[
 x_{i}=0 \text{ if } i\neq \lambda, \kappa \qquad \text{ and } \qquad 
 y_{j}=0 \text{ if } j\neq \mu, \nu
\]
yields that
\begin{multline*}
 f\left(\alpha_{\lambda}x_{\lambda}+\alpha_{\kappa}x_{\kappa}, \beta_{\mu}y_{\mu}+\beta_{\nu}y_{\nu}\right)
 =
 f_{\lambda, \mu}(x_{\lambda}, y_{\mu})
 +f_{\lambda, \nu}(x_{\lambda}, y_{\nu})
 +f_{\kappa, \mu}(x_{\kappa}, y_{\mu})
 +f_{\kappa, \nu}(x_{\kappa}, y_{\nu})\\
 +\sum_{j\neq \mu, \nu}f_{\lambda, j}(x_{\lambda}, 0)
 +\sum_{j\neq \mu, \nu}f_{\kappa, j}(x_{\kappa}, 0)
 +\sum_{i\neq \lambda, \kappa}f_{i, \mu}(0, y_{\mu})
 +\sum_{i\neq \lambda, \kappa}f_{i, \nu}(0, y_{\nu})
\end{multline*}
for any $x_{\lambda}, x_{\kappa}\in X$ and $y_{\mu}, y_{\nu}\in Y$.
Consider the functions $\widetilde{f_{\lambda, \mu}},
\widetilde{f_{\kappa, \nu}}\colon X\times Y\to \mathbb{K}$ defined
by
\[
 \widetilde{f_{\lambda, \mu}}(x, z)=f_{\lambda, \mu}(x, z)+\sum_{j\neq \mu, \nu}f_{\lambda, j}(x, 0)+\sum_{i\neq \lambda, \kappa}f_{i, \mu}(0, z)
 \qquad
 \left(x\in X, z\in Y\right)
\]
and
\[
 \widetilde{f_{\kappa, \nu}}(x, z)=f_{\kappa, \nu}(x, z)+\sum_{j\neq \mu, \nu}f_{\kappa, j}(x, 0)+\sum_{i\neq \lambda, \kappa}f_{i, \nu}(0, z)
 \qquad
 \left(x\in X, z\in Y\right)
\]
to receive that
\begin{equation}
  f\left(\alpha_{\lambda}x_{\lambda}+\alpha_{\kappa}x_{\kappa}, \beta_{\mu}y_{\mu}+\beta_{\nu}y_{\nu}\right)
 =
 \widetilde{f_{\lambda, \mu}}(x_{\lambda}, y_{\mu})
 +f_{\lambda, \nu}(x_{\lambda}, y_{\nu})
 +f_{\kappa, \mu}(x_{\kappa}, y_{\mu})
 +\widetilde{f_{\kappa, \nu}}(x_{\kappa}, y_{\nu})
\end{equation}
is satisfied for any $x_{\lambda}, x_{\kappa}\in X$ and $y_{\mu},
y_{\nu}\in Y$. This equation can however be handled with the aid of
the results of Section \ref{Sec_n2}.

\section{The case of a single unknown function in the equation --- existence of non-trivial solutions}

Let $X$ and $Y$ be linear spaces over the same field $\mathbb{K}$
and consider the following functional equation
\begin{multline}\label{Eq_non-trivial}
 f\left(\alpha_{1}\,x_{1}+\alpha_{2}\,x_{2} , \beta_{1}\,y_{1}+\beta_{2}\,y_{2}\right)
 \\
 =\gamma_{1, 1}f(x_{1},y_{1})+\gamma_{1, 2}f(x_{1},y_{2})+\gamma_{2, 1}f(x_{2},y_{1})+\gamma_{2, 2}f(x_{2},y_{2})
 \qquad
 \left(x_{1}, x_{2}\in X, y_{1}, y_{2}\in Y\right),
\end{multline}
where $f\colon X\times Y\to \mathbb{K}$ denotes the unknown function
and $\alpha_{1}, \alpha_{2}, \beta_{1}, \beta_{2}\in \mathbb{K}$ and
$\gamma_{1, 1}, \gamma_{1, 2}, \gamma_{2, 1}, \gamma_{2, 2}\in
\mathbb{K}$ are given constants. 

Recall that due to the linearity of the above equation we may (and we also do) suppose that 
\[
 f(0, 0)=0
\]
holds. Otherwise the function
\[
\widetilde{f}(x, y)=f(x, y)-f(0, 0)
\qquad \left(x\in X, y\in Y\right)
\]
can be considered. This function clearly vanishes at the point $(0, 0)$ and it fulfills the
same functional equation, too.

Furthermore, the linearity of the investigated equation implies 
that the identically zero function is always a solution. 
In this section we would like to study under what conditions
admits equation \eqref{Eq_non-trivial} a \emph{non-identically zero}
solution. Clearly, in every case the results of the previous
sections can be applied with the choice
\[
 f_{i, j}(x, y)= \gamma_{i, j}f(x, y)
 \qquad
 \left(x\in X, y\in Y\right).
\]
This means that the assumption that the function $f$ is not
identically zero will imply algebraic conditions for the involved
parameters $\alpha_{1}, \alpha_{2}, \beta_{1}, \beta_{2}\in
\mathbb{K}$ and $\gamma_{1, 1}, \gamma_{1, 2}, \gamma_{2, 1},
\gamma_{2, 2}\in \mathbb{K}$.

Similarly as before, first we consider the so-called degenerate
cases.

\subsection{Degenerate cases}

\subsubsection{The case $\alpha_{1}=\alpha_{2}=\beta_{1}=\beta_{2}=0$}

In case $\alpha_{1}=\alpha_{2}=\beta_{1}=\beta_{2}=0$ equation
\eqref{Eq_non-trivial} reduces to
\[
 \gamma_{1, 1}f(x_{1},y_{1})+\gamma_{1, 2}f(x_{1},y_{2})+\gamma_{2, 1}f(x_{2},y_{1})+\gamma_{2, 2}f(x_{2},y_{2})=0
 \qquad
 \left(x_{1}, x_{2}\in X, y_{1}, y_{2}\in Y\right),
\]
where $\gamma_{i, j}\in \mathbb{K}$ for any $i, j\in \left\{1,
2\right\}$.

\begin{prop}\label{Prop_nontriv1}
 Let $X$ and $Y$ be linear spaces over the field $\mathbb{K}$, $\gamma_{i, j}\in \mathbb{K}$ be given constants such that not all of them are zero and
 $f\colon X\times Y\to \mathbb{K}$ be a function such that
 \begin{equation}\label{Eq_nontriv_deg2}
   \gamma_{1, 1}f(x_{1},y_{1})+\gamma_{1, 2}f(x_{1},y_{2})+\gamma_{2, 1}f(x_{2},y_{1})+\gamma_{2, 2}f(x_{2},y_{2})=0
 \qquad
 \left(x_{1}, x_{2}\in X, y_{1}, y_{2}\in Y\right).
 \end{equation}
 Then and only then there exist functions $\chi\colon X\to \mathbb{K}$ and
$\zeta\colon Y\to \mathbb{K}$ vanishing at zero such that
\[
 f(x, z)= \chi(x)+\zeta(z)
 \qquad
 \left(x\in X, z\in Y\right).
\]
Furthermore
\begin{enumerate}[(i)]
 \item either the following system of linear equations
 \[
  \begin{array}{rcl}
   \gamma_{1, 1}+\gamma_{1, 2}&=&0\\
   \gamma_{2, 1}+\gamma_{2, 2}&=&0
  \end{array}
 \]
is fulfilled or the function $\chi$ is identically zero.
\item either the following system of linear equations
 \[
  \begin{array}{rcl}
   \gamma_{1, 1}+\gamma_{2, 1}&=&0\\
   \gamma_{1, 2}+\gamma_{2, 2}&=&0
  \end{array}
 \]
is fulfilled or the function $\zeta$ is identically zero.
\end{enumerate}
\end{prop}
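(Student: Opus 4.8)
The plan is to reduce the claim to Proposition~\ref{Prop1} via the substitution $f_{i,j}=\gamma_{i,j}f$ suggested just before the proposition. First I would note that, with this choice, equation~\eqref{Eq_deg2} written for the functions $f_{i,j}$ is literally equation~\eqref{Eq_nontriv_deg2}, so Proposition~\ref{Prop1} applies and provides functions $\chi_{i,j}\colon X\to\mathbb{K}$ and $\zeta_{i,j}\colon Y\to\mathbb{K}$, all vanishing at zero, with
\[
\gamma_{i,j}\,f(x,z)=\chi_{i,j}(x)+\zeta_{i,j}(z)
\qquad
\left(x\in X,\ z\in Y,\ i,j=1,2\right).
\]
Since by hypothesis not all the $\gamma_{i,j}$ are zero, I would fix a pair $(i_0,j_0)$ with $\gamma_{i_0,j_0}\neq 0$ and divide the corresponding identity by $\gamma_{i_0,j_0}$; putting $\chi=\gamma_{i_0,j_0}^{-1}\chi_{i_0,j_0}$ and $\zeta=\gamma_{i_0,j_0}^{-1}\zeta_{i_0,j_0}$ yields $f(x,z)=\chi(x)+\zeta(z)$ with $\chi,\zeta$ vanishing at zero, which is the first assertion.

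Next I would feed this additive form back into~\eqref{Eq_nontriv_deg2} and collect the terms according to the four independent arguments, obtaining
\[
(\gamma_{1,1}+\gamma_{1,2})\chi(x_1)+(\gamma_{2,1}+\gamma_{2,2})\chi(x_2)+(\gamma_{1,1}+\gamma_{2,1})\zeta(y_1)+(\gamma_{1,2}+\gamma_{2,2})\zeta(y_2)=0
\]
for all $x_1,x_2\in X$ and $y_1,y_2\in Y$. Using the independence of the variables --- setting three of them to zero at a time and recalling $\chi(0)=\zeta(0)=0$ --- I would split this single identity into the four scalar relations $(\gamma_{1,1}+\gamma_{1,2})\chi\equiv 0$, $(\gamma_{2,1}+\gamma_{2,2})\chi\equiv 0$, $(\gamma_{1,1}+\gamma_{2,1})\zeta\equiv 0$ and $(\gamma_{1,2}+\gamma_{2,2})\zeta\equiv 0$. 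If $\chi\not\equiv 0$, choosing $x^{\ast}$ with $\chi(x^{\ast})\neq 0$ in the first two relations forces $\gamma_{1,1}+\gamma_{1,2}=0$ and $\gamma_{2,1}+\gamma_{2,2}=0$, which is precisely alternative~(i); alternative~(ii) follows symmetrically from the last two relations applied to $\zeta$.

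For the converse one simply reverses this computation: substituting $f=\chi+\zeta$ into~\eqref{Eq_nontriv_deg2} and grouping as above, conditions~(i) and~(ii) make each of the four coefficient-times-function products vanish, so the left-hand side is identically zero. I do not anticipate a serious obstacle here; the two points that need care are the use of the hypothesis that not all $\gamma_{i,j}$ vanish --- without it one cannot pass from the additive form of the products $\gamma_{i,j}f$ to that of $f$ itself --- and the correct reading of the two ``either--or'' statements, which are genuine dichotomies arising because a product in the field $\mathbb{K}$ vanishes exactly when one of its factors does.
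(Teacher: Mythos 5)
Your proposal is correct and follows essentially the same route as the paper: apply Proposition~\ref{Prop1} with $f_{i,j}=\gamma_{i,j}f$ to obtain $f(x,z)=\chi(x)+\zeta(z)$, substitute this form back into \eqref{Eq_nontriv_deg2}, group the terms by the four independent variables, and read off the dichotomies. Your extra step of dividing by a nonzero $\gamma_{i_0,j_0}$ to pass from the decomposition of $\gamma_{i,j}f$ to that of $f$ itself is a welcome clarification of a point the paper leaves implicit, but it does not change the argument.
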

\begin{proof}
 In view of Proposition \ref{Prop1} we get that there exist functions 
 $\chi\colon X\to \mathbb{K}$ and $\zeta\colon Y\to \mathbb{K}$ vanishing at zero such that 
 \[
 f(x, z)= \chi(x)+\zeta(z)
 \qquad
 \left(x\in X, z\in Y\right).
\]
Using this representation of the function $f$, equation \eqref{Eq_nontriv_deg2} yields that 
\begin{multline*}
\gamma_{1, 1}\left(\chi(x_{1})+ \zeta(y_{1})\right)+\gamma_{1, 2}\left(\chi(x_{1})+\zeta(y_{2})\right)
+\gamma_{2, 1}\left(\chi((x_{2})+\zeta(y_{1})\right)+\gamma_{2, 2}\left(\chi(x_{2})+\zeta(y_{2})\right)=0
 \\
 \left(x_{1}, x_{2}\in X, y_{1}, y_{2}\in Y\right), 
 \end{multline*}
 or equivalently
 \begin{multline*}
 \chi(x_{1})\left(\gamma_{1, 1}+\gamma_{1, 2}\right)+
 \chi(x_{2})\left(\gamma_{2, 1}+\gamma_{2, 2}\right)+
 \zeta(y_{1})\left(\gamma_{1, 1}+\gamma_{2, 1}\right)+
 \zeta(y_{2})\left(\gamma_{1, 2}+\gamma_{2, 2}\right)
 =0
  \\
 \left(x_{1}, x_{2}\in X, y_{1}, y_{2}\in Y\right). 
 \end{multline*}
Since we have independent variables we get that 
\[
  \begin{array}{rcl}
   \gamma_{1, 1}+\gamma_{1, 2}&=&0\\
   \gamma_{2, 1}+\gamma_{2, 2}&=&0
  \end{array}
 \]
is fulfilled or the function $\chi$ is identically zero.  
Similarly,  
\[
  \begin{array}{rcl}
   \gamma_{1, 1}+\gamma_{2, 1}&=&0\\
   \gamma_{1, 2}+\gamma_{2, 2}&=&0
  \end{array}
 \]
holds or the function $\zeta$ is identically zero.
\end{proof}

\subsubsection{The case $\alpha_{1}=\alpha_{2}=\beta_{1}=0$ and $\beta_{2}\neq 0$}

In such a situation  \eqref{Eq_non-trivial} reduces to
\[
 f(0, \beta_{2}y_{2})=  \gamma_{1, 1}f(x_{1},y_{1})+\gamma_{1, 2}f(x_{1},y_{2})+\gamma_{2, 1}f(x_{2},y_{1})+\gamma_{2, 2}f(x_{2},y_{2})
 \qquad
 \left(x_{1}, x_{2}\in X, y_{1}, y_{2}\in Y\right).
\]
In view of Proposition \ref{Prop3.1.3}, the proof of the following proposition is straightforward and similar to that of Proposition 
\ref{Prop_nontriv1}. 
The basic step is to consider $f$ as the sum of single variable functions (Proposition \ref{Prop3.1.3}) 
and substitute such a special form of $f$ into the functional equation. 

\begin{prop}\label{Prop_nontriv2}
 Let $X$ and $Y$ be linear spaces over the field $\mathbb{K}$, $\beta_{2}, \gamma_{i, j}\in \mathbb{K}$ 
 be given constants such that not all of them are zero,  and
 $f\colon X\times Y\to \mathbb{K}$ be a function such that
 \begin{equation}\label{Eq_nontriv_deg3}
  f(0, \beta_{2}y_{2})= \gamma_{1, 1}f(x_{1},y_{1})+\gamma_{1, 2}f(x_{1},y_{2})+\gamma_{2, 1}f(x_{2},y_{1})+\gamma_{2, 2}f(x_{2},y_{2})
 \qquad
 \left(x_{1}, x_{2}\in X, y_{1}, y_{2}\in Y\right).
 \end{equation}
 Then and only then there exist functions $\chi\colon X\to \mathbb{K}$ and
$\zeta\colon Y\to \mathbb{K}$ vanishing at zero such that
\[
 f(x, z)= \chi(x)+\zeta(z)
 \qquad
 \left(x\in X, z\in Y\right).
\]
Furthermore
\begin{enumerate}[(i)]
 \item either the following system of linear equations
 \[
  \begin{array}{rcl}
   \gamma_{1, 1}+\gamma_{1, 2}&=&0\\
   \gamma_{2, 1}+\gamma_{2, 2}&=&0
  \end{array}
 \]
is fulfilled or the function $\chi$ is identically zero.
\item either the following system of equations
 \[
  \begin{array}{rcl}
   \gamma_{1, 1}+\gamma_{2, 1}&=&0\\
   \zeta(\beta_{2}z)&=&\left(\gamma_{1, 2}+\gamma_{2, 2}\right)\zeta (z)
  \end{array}
  \qquad
  \left(z\in Y\right)
 \]
is fulfilled or the function $\zeta$ is identically zero.
\end{enumerate}
\end{prop}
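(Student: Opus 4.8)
The plan is to follow the recipe indicated above: first represent $f$ as a sum of single-variable functions by invoking Proposition \ref{Prop3.1.3}, and then feed this separated form back into \eqref{Eq_nontriv_deg3} and read off the constraints on the parameters via independence of the variables. First I would observe that \eqref{Eq_nontriv_deg3} is precisely the single-function instance of the equation treated in Proposition \ref{Prop3.1.3}: after the (admissible, since $\beta_{2}\neq 0$) substitution $y_{2}\mapsto \beta_{2}^{-1}y_{2}$ it takes the form occurring there, with the choices $f_{1,1}=\gamma_{1,1}f$, $f_{2,1}=\gamma_{2,1}f$, $f_{1,2}(x,y)=\gamma_{1,2}f(x,\beta_{2}^{-1}y)$ and $f_{2,2}(x,y)=\gamma_{2,2}f(x,\beta_{2}^{-1}y)$. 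Proposition \ref{Prop3.1.3} then decomposes each $f_{i,j}$ as a function of $x$ plus a function of $y$; since by hypothesis some $\gamma_{i_{0},j_{0}}\neq 0$, dividing the corresponding relation by $\gamma_{i_{0},j_{0}}$ (and undoing the harmless rescaling in the second variable when $j_{0}=2$) shows that $f$ itself is separated. Setting $\chi(x)=f(x,0)$ and $\zeta(z)=f(0,z)$ and using $f(0,0)=0$, I obtain $f(x,z)=\chi(x)+\zeta(z)$ with $\chi,\zeta$ vanishing at zero.

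Next I would substitute this form into \eqref{Eq_nontriv_deg3}. Because $\chi(0)=0$, the left-hand side collapses to $\zeta(\beta_{2}y_{2})$, while collecting the terms on the right yields
\[
\zeta(\beta_{2}y_{2})=\chi(x_{1})(\gamma_{1,1}+\gamma_{1,2})+\chi(x_{2})(\gamma_{2,1}+\gamma_{2,2})+\zeta(y_{1})(\gamma_{1,1}+\gamma_{2,1})+\zeta(y_{2})(\gamma_{1,2}+\gamma_{2,2}).
\]
Since $x_{1},x_{2},y_{1},y_{2}$ are independent and the left-hand side depends on $y_{2}$ alone, I would specialize. Putting $x_{2}=y_{1}=y_{2}=0$ forces $\chi(x_{1})(\gamma_{1,1}+\gamma_{1,2})=0$, and $x_{1}=y_{1}=y_{2}=0$ forces $\chi(x_{2})(\gamma_{2,1}+\gamma_{2,2})=0$; hence if $\chi\not\equiv 0$ both sums vanish, which is assertion (i). Likewise $x_{1}=x_{2}=y_{2}=0$ gives $\zeta(y_{1})(\gamma_{1,1}+\gamma_{2,1})=0$, whereas $x_{1}=x_{2}=y_{1}=0$ gives the relation $\zeta(\beta_{2}y_{2})=(\gamma_{1,2}+\gamma_{2,2})\zeta(y_{2})$, which holds with no restriction on the parameters. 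Combining the last two, if $\zeta\not\equiv 0$ then $\gamma_{1,1}+\gamma_{2,1}=0$ together with the scaling relation, i.e.\ assertion (ii).

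For the converse I would run the substitution backwards: assuming $f=\chi+\zeta$ with $\chi,\zeta$ vanishing at zero and the alternatives (i), (ii) in force, the two $\chi$-terms cancel by (i); by (ii) either $\zeta$ vanishes, making both sides zero, or the $\zeta(y_{1})$-term drops out and $\zeta(y_{2})(\gamma_{1,2}+\gamma_{2,2})$ equals $\zeta(\beta_{2}y_{2})$, reproducing the left-hand side. In either case \eqref{Eq_nontriv_deg3} is recovered.

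The only genuinely delicate point, which distinguishes this statement from the homogeneous degenerate case of Proposition \ref{Prop_nontriv1}, is the presence of the factor $\beta_{2}$ on the left: the variable $y_{2}$ now occurs on \emph{both} sides, so it cannot be isolated the way $x_{1},x_{2},y_{1}$ can, and instead of the clean coefficient condition $\gamma_{1,2}+\gamma_{2,2}=0$ one is led to the eigenvalue-type functional relation $\zeta(\beta_{2}z)=(\gamma_{1,2}+\gamma_{2,2})\zeta(z)$. I would also make sure the reduction to Proposition \ref{Prop3.1.3} respects the single-function structure, which it does precisely because any one non-vanishing $\gamma_{i_{0},j_{0}}$ already pins down $f$ up to the separated form.
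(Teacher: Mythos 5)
Your proof follows the paper's own route exactly: the paper merely remarks that one should apply Proposition \ref{Prop3.1.3} to write $f$ as a sum of single-variable functions and then substitute that form back into the equation, which is precisely what you carry out in detail (and you rightly observe that recovering the separated form of $f$ itself from that of the $f_{i,j}=\gamma_{i,j}f$ requires some $\gamma_{i_{0},j_{0}}\neq 0$, which is the reading of the hypothesis under which the statement is actually true). The subsequent case analysis via independence of the variables and the converse verification match the intended argument, so the proposal is correct and essentially identical in approach.
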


\subsubsection{The case $\alpha_{1}, \alpha_{2}\neq 0$ and  $\beta_{1}, \beta_{2}=0$}

In such a situation  \eqref{Eq_non-trivial} reduces to
\[
 f(\alpha_{1}x_{1}+\alpha_{2}x_{2}, 0)= \gamma_{1, 1}f(x_{1},y_{1})+\gamma_{1, 2}f(x_{1},y_{2})+\gamma_{2, 1}f(x_{2},y_{1})+\gamma_{2, 2}f(x_{2},y_{2})
 \qquad
 \left(x_{1}, x_{2}\in X, y_{1}, y_{2}\in Y\right).
\]

As before, taking $f$ as the sum of single variable functions (Proposition \ref{Prop4}), substitute into the functional equation.

\begin{prop}\label{Prop_nontriv3}
 Let $X$ and $Y$ be linear spaces over the field $\mathbb{K}$, $\alpha_{1}, \alpha_{2}, \gamma_{i, j}\in \mathbb{K}$ be 
 given constants such that not all of them are zero
 and
 $f\colon X\times Y\to \mathbb{K}$ be a function such that
 \begin{equation}\label{Eq_nontriv_deg4}
  f(\alpha_{1}x_{1}+\alpha_{2}x_{2}, 0) = \gamma_{1, 1}f(x_{1},y_{1})+\gamma_{1, 2}f(x_{1},y_{2})+\gamma_{2, 1}f(x_{2},y_{1})+\gamma_{2, 2}f(x_{2},y_{2})
 \qquad
 \left(x_{1}, x_{2}\in X, y_{1}, y_{2}\in Y\right).
 \end{equation}
Then and only then there exists an additive function $a\colon X\to \mathbb{K}$ and a function $\zeta\colon Y\to \mathbb{K}$ vanishing at zero such that
\[
 f(x, y)= a(x)+\zeta(y)
 \qquad
 \left(x\in X, y\in Y\right).
\]
Furthermore the above additive function $a$ has to fulfill
\[
 a(\alpha_{1}x_{1}+\alpha_{2}x_{2})=(\gamma_{1, 1}+\gamma_{1, 2})a(x_{1})+(\gamma_{2, 1}+\gamma_{2, 2})a(x_{2})
\]
for arbitrary $x_{1}, x_{2}\in X$ and for the mapping $\zeta$ alternative (ii) of Proposition \ref{Prop_nontriv1} is fulfilled. 
\end{prop}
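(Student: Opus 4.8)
The plan is to use that the left-hand side of \eqref{Eq_nontriv_deg4} depends only on $x_{1},x_{2}$, so the right-hand side must be independent of $y_{1},y_{2}$, and to peel the solution off along the axis $y=0$ first. I would begin by substituting $y_{1}=y_{2}=0$ and writing $g(x)=f(x,0)$; using $f(0,0)=0$ this yields
\[
 g(\alpha_{1}x_{1}+\alpha_{2}x_{2})=(\gamma_{1,1}+\gamma_{1,2})\,g(x_{1})+(\gamma_{2,1}+\gamma_{2,2})\,g(x_{2})
 \qquad \left(x_{1},x_{2}\in X\right).
\]
Since $\alpha_{1},\alpha_{2}\neq 0$ and $X$ is a $\mathbb{K}$-vector space, the maps $x\mapsto\alpha_{i}x$ are bijective, so after the substitutions $x_{i}\mapsto x_{i}/\alpha_{i}$ this is a genuine Pexider equation whose three unknowns all vanish at $0$. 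The Pexider argument used in Proposition \ref{Prop4} then forces $g$ to be additive; I put $a:=g$, and by construction $a$ satisfies the displayed relation $a(\alpha_{1}x_{1}+\alpha_{2}x_{2})=(\gamma_{1,1}+\gamma_{1,2})a(x_{1})+(\gamma_{2,1}+\gamma_{2,2})a(x_{2})$.

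Next I would remove the axis part by setting $\widetilde{f}(x,y)=f(x,y)-f(x,0)$. Subtracting the $y_{1}=y_{2}=0$ instance of \eqref{Eq_nontriv_deg4} from the full equation cancels the left-hand side and leaves
\[
 \gamma_{1,1}\widetilde{f}(x_{1},y_{1})+\gamma_{1,2}\widetilde{f}(x_{1},y_{2})+\gamma_{2,1}\widetilde{f}(x_{2},y_{1})+\gamma_{2,2}\widetilde{f}(x_{2},y_{2})=0,
\]
which is exactly equation \eqref{Eq_nontriv_deg2} for the single unknown $\widetilde{f}$. Applying Proposition \ref{Prop_nontriv1} to $\widetilde{f}$ yields functions $\chi_{0}\colon X\to\mathbb{K}$ and $\zeta\colon Y\to\mathbb{K}$ vanishing at zero with $\widetilde{f}(x,z)=\chi_{0}(x)+\zeta(z)$. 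Since $\widetilde{f}(x,0)\equiv 0$ and $\zeta(0)=0$, this forces $\chi_{0}\equiv 0$, hence $\widetilde{f}(x,y)=\zeta(y)$ and therefore $f(x,y)=a(x)+\zeta(y)$. Alternative (i) of Proposition \ref{Prop_nontriv1} is then vacuous (its function is the vanishing $\chi_{0}$), whereas alternative (ii) for $\zeta$ is inherited unchanged, which is precisely the condition claimed for $\zeta$.

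For the converse I would substitute $f(x,y)=a(x)+\zeta(y)$ into \eqref{Eq_nontriv_deg4}: the constraint on $a$ makes the $a$-terms match the left-hand side, and collecting the rest leaves $(\gamma_{1,1}+\gamma_{2,1})\zeta(y_{1})+(\gamma_{1,2}+\gamma_{2,2})\zeta(y_{2})=0$, which by independence of $y_{1},y_{2}$ is equivalent to alternative (ii) of Proposition \ref{Prop_nontriv1}. Thus the listed conditions are sufficient as well. I expect the only genuinely delicate point to be the additivity of $g$: it rests on the bijectivity of $x\mapsto\alpha_{i}x$, i.e.\ on $\alpha_{1},\alpha_{2}\neq 0$, which is exactly what lets the Pexider machinery of Proposition \ref{Prop4} apply; everything else is routine bookkeeping.
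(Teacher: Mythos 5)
Your proposal is correct and follows essentially the same route the paper intends: the paper's one-line proof sketch ("taking $f$ as the sum of single variable functions (Proposition \ref{Prop4}), substitute into the functional equation") amounts to exactly your two steps of extracting the additive trace $a=f(\cdot,0)$ from the $y_{1}=y_{2}=0$ Pexider reduction and then subtracting to land in the homogeneous equation \eqref{Eq_nontriv_deg2}, where Proposition \ref{Prop_nontriv1} delivers $\zeta$ and alternative (ii). Your version is simply a complete write-up of that sketch, including the converse substitution the paper leaves implicit.
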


\subsubsection{The case $\alpha_{1}, \beta_{1}\neq 0$ and  $\alpha_{2}, \beta_{2}=0$}

In such a situation  \eqref{Eq_non-trivial} reduces to
\[
 f(\alpha_{1}x_{1}, \beta_{1}y_{1})= \gamma_{1, 1}f(x_{1},y_{1})+\gamma_{1, 2}f(x_{1},y_{2})+\gamma_{2, 1}f(x_{2},y_{1})+\gamma_{2, 2}f(x_{2},y_{2})
 \qquad
 \left(x_{1}, x_{2}\in X, y_{1}, y_{2}\in Y\right).
\]

To prove the following result, consider  $f$ as the sum of single variable functions (Proposition \ref{Prop5}) and substitute into the functional equation. 

\begin{prop}\label{Prop_nontriv4}
 Let $X$ and $Y$ be linear spaces over the field $\mathbb{K}$, $\alpha_{1}, \alpha_{2}, \gamma_{i, j}\in \mathbb{K}$ be given constants and
 $f\colon X\times Y\to \mathbb{K}$ be a function such that
 \begin{equation}\label{Eq_nontriv_deg5}
  f(\alpha_{1}x_{1}, \beta_{1}y_{1})= \gamma_{1, 1}f(x_{1},y_{1})+\gamma_{1, 2}f(x_{1},y_{2})+\gamma_{2, 1}f(x_{2},y_{1})+\gamma_{2, 2}f(x_{2}, y_{2})
 \qquad
 \left(x_{1}, x_{2}\in X, y_{1}, y_{2}\in Y\right).
 \end{equation}
Then and only then
\begin{enumerate}[(A)]
\item $\gamma_{1, 2}, \gamma_{2, 1}, \gamma_{2, 2}=0$ and 
$f\colon X\times Y\to \mathbb{K}$ is an arbitrary function fulfilling 
\[
  f(\alpha_{1}x, \beta_{1}y)= \gamma_{1, 1}f(x,y) 
  \qquad 
  \left(x\in X, y\in Y\right), 
\]

\item or there exist functions $\chi\colon X\to\mathbb{K}$ and $\zeta\colon Y\to \mathbb{K}$ vanishing at zero such that 
\[
 f(x, y)= \chi(x)+\zeta(y)
 \qquad
 \left(x\in X, z\in Y\right).
\]
Furthermore the mappings $\chi$ and $\zeta$ also fulfill
\[
 \begin{array}{rcl}
  \chi(\alpha_{1}x)&=&(\gamma_{1, 1}+\gamma_{1, 2})\chi(x)\\
  \zeta(\beta_{1}z)&=&(\gamma_{1, 1}+\gamma_{2, 1})\zeta(z)
 \end{array}
\qquad \left(x\in X, z\in Y\right)
\]
and 
\begin{enumerate}[(i)]
 \item either 
 \[
  \gamma_{2, 1}+\gamma_{2, 2}=0
 \]
or $\chi$ is identically zero; 
\item either 
\[
 \gamma_{1, 2}+\gamma_{2, 2}=0
\]
or $\zeta$ is identically zero. 
\end{enumerate}
\end{enumerate}
\end{prop}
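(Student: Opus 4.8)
The plan is to exploit that the left-hand side of \eqref{Eq_nontriv_deg5} does not depend on $x_2$ or $y_2$, and then to separate the two announced alternatives according to whether all of $\gamma_{1,2},\gamma_{2,1},\gamma_{2,2}$ vanish. First I would write the equation once as it stands and once after the substitution $x_2=y_2=0$ (using $f(0,0)=0$) and subtract, obtaining
\[
\gamma_{1,2}\bigl(f(x_1,y_2)-f(x_1,0)\bigr)+\gamma_{2,1}\bigl(f(x_2,y_1)-f(0,y_1)\bigr)+\gamma_{2,2}f(x_2,y_2)=0
\]
for all $x_1,x_2\in X$ and $y_1,y_2\in Y$. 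If $\gamma_{1,2}=\gamma_{2,1}=\gamma_{2,2}=0$, this is vacuous and \eqref{Eq_nontriv_deg5} collapses to $f(\alpha_1 x,\beta_1 y)=\gamma_{1,1}f(x,y)$ with $f$ otherwise free; this is alternative (A).

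Otherwise I would show that $f$ is forced to split as a sum of one-variable functions. Setting $x_2=0$ in the displayed identity gives $\gamma_{1,2}\bigl(f(x_1,y_2)-f(x_1,0)\bigr)=-\gamma_{2,2}f(0,y_2)$, and setting $y_2=0$ gives $\gamma_{2,1}\bigl(f(x_2,y_1)-f(0,y_1)\bigr)=-\gamma_{2,2}f(x_2,0)$. If $\gamma_{1,2}\neq0$, the first shows that $f(x,y)-f(x,0)$ is independent of $x$, hence equals its value $f(0,y)$ at $x=0$; if $\gamma_{2,1}\neq0$, the second shows that $f(x,y)-f(0,y)=f(x,0)$; and if $\gamma_{1,2}=\gamma_{2,1}=0\neq\gamma_{2,2}$ the identity forces $f\equiv0$. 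In each sub-case
\[
f(x,y)=\chi(x)+\zeta(y),\qquad \chi(x)=f(x,0),\quad\zeta(y)=f(0,y),
\]
with $\chi,\zeta$ vanishing at $0$ (this is the one-variable splitting anticipated by Proposition~\ref{Prop5}).

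Next I would substitute $f(x,y)=\chi(x)+\zeta(y)$ into \eqref{Eq_nontriv_deg5} and collect coefficients, which gives
\[
\chi(\alpha_1 x_1)+\zeta(\beta_1 y_1)=(\gamma_{1,1}+\gamma_{1,2})\chi(x_1)+(\gamma_{2,1}+\gamma_{2,2})\chi(x_2)+(\gamma_{1,1}+\gamma_{2,1})\zeta(y_1)+(\gamma_{1,2}+\gamma_{2,2})\zeta(y_2).
\]
As the left-hand side is free of $x_2,y_2$ and $\chi(0)=\zeta(0)=0$, the terms $(\gamma_{2,1}+\gamma_{2,2})\chi(x_2)$ and $(\gamma_{1,2}+\gamma_{2,2})\zeta(y_2)$ must vanish identically, which is precisely alternatives (i) and (ii); dropping them and then putting $y_1=0$, respectively $x_1=0$, yields $\chi(\alpha_1 x)=(\gamma_{1,1}+\gamma_{1,2})\chi(x)$ and $\zeta(\beta_1 z)=(\gamma_{1,1}+\gamma_{2,1})\zeta(z)$. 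The converse is a direct substitution: in case (B) the two relations reproduce the $x_1,y_1$-terms while (i) and (ii) annihilate the $x_2,y_2$-terms, and in case (A) the equation holds by hypothesis.

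The routine parts are the substitutions; the one place demanding care is the case split in the reduction step. One must keep $\gamma_{1,2}=\gamma_{2,1}=\gamma_{2,2}=0$ genuinely apart — there $f$ need \emph{not} decompose — and correctly merge the three sub-cases $\gamma_{1,2}\neq0$, $\gamma_{2,1}\neq0$, and $\gamma_{1,2}=\gamma_{2,1}=0\neq\gamma_{2,2}$, all of which collapse $f$ to a sum of single-variable functions.
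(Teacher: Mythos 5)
Your proof is correct, and it is essentially the route the paper intends, carried out in full rather than by citation: the paper's ``proof'' is a one-line pointer (``consider $f$ as the sum of single variable functions (Proposition~\ref{Prop5}) and substitute into the functional equation''), which implicitly relies on Proposition~\ref{Prop5} together with the follow-up application of Proposition~\ref{Prop1} to the residual functions $\gamma_{1,2}(f(x,y)-f(x,0))$, $\gamma_{2,1}(f(x,y)-f(0,y))$, $\gamma_{2,2}f(x,y)$ to force the splitting $f(x,y)=f(x,0)+f(0,y)$. You obtain the same splitting directly from the subtracted identity
\[
\gamma_{1,2}\bigl(f(x_1,y_2)-f(x_1,0)\bigr)+\gamma_{2,1}\bigl(f(x_2,y_1)-f(0,y_1)\bigr)+\gamma_{2,2}f(x_2,y_2)=0,
\]
with the three-way case analysis on which of $\gamma_{1,2},\gamma_{2,1},\gamma_{2,2}$ is nonzero, and this has the advantage of making the dichotomy between alternative (A) (where $f$ need not decompose) and alternative (B) completely explicit --- a point the paper's sketch glosses over, since a blind appeal to Proposition~\ref{Prop5} alone does not by itself separate these cases. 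The remaining steps (substituting $f=\chi+\zeta$, reading off (i) and (ii) from the $x_2$- and $y_2$-coefficients, and the converse verification) match what the paper does in the analogous Propositions~\ref{Prop_nontriv1}--\ref{Prop_nontriv3}. I see no gap.
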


\subsubsection{The case $\alpha_{1}, \alpha_{2}, \beta_{1}\neq 0$ and  $\beta_{2}=0$}

In such a situation  \eqref{Eq_non-trivial} reduces to
\[
 f(\alpha_{1}x_{1}+\alpha_{2}x_{2}, \beta_{1}y_{1})= \gamma_{1, 1}f(x_{1},y_{1})+\gamma_{1, 2}f(x_{1},y_{2})+\gamma_{2, 1}f(x_{2},y_{1})+\gamma_{2, 2}f(x_2, y_2)
 \qquad
 \left(x_{1}, x_{2}\in X, y_{1}, y_{2}\in Y\right).
\]

\begin{prop}\label{Prop_nontriv5}
 Let $X$ and $Y$ be linear spaces over the field $\mathbb{K}$, 
 $\alpha_{1}, \alpha_{2}, \beta_{1},  \gamma_{i, j}\in \mathbb{K}$, $i, j=1, 2$ be given constants and
 $f\colon X\times Y\to \mathbb{K}$ be a function such that
 \begin{multline}\label{Eq_nontriv_deg6}
  f(\alpha_{1}x_{1}+\alpha_{2}x_{2}, \beta_{1}y_{1})
  \\
  = \gamma_{1, 1}f(x_{1},y_{1})+\gamma_{1, 2}f(x_{1},y_{2})+\gamma_{2, 1}f(x_{2},y_{1})+\gamma_{2, 2}f(x_{2}, y_{2})
 \qquad
 \left(x_{1}, x_{2}\in X, y_{1}, y_{2}\in Y\right).
 \end{multline}
Then and only then 
there exists a mapping $A\colon X\times Y\to \mathbb{K}$ additive in its first variable, further there are functions 
$\chi\colon X\to \mathbb{K}$ and $\zeta\colon Y\to \mathbb{K}$ vanishing at zero such that $\chi$ is additive and 
\[
 f(x, y)= A(x, y)+\chi(x)+\zeta(y)
 \qquad 
 \left(x\in X, y\in Y\right). 
\]
Furthermore, we have that 
\[
 \zeta(\beta_{1}y)= (\gamma_{1, 1}+\gamma_{2, 1})\zeta(y) 
 \qquad 
 \left(y\in Y\right)
\]
and also
\[
 \left(\gamma_{1, 2}+\gamma_{2, 2}\right)\zeta(y)=0 
 \qquad 
 \left(y\in Y\right), 
\]
yielding that $\gamma_{1, 2}+\gamma_{2, 2}=0$ or $\zeta$ is identically zero. 
Additionally, the alternatives below also hold
\begin{enumerate}[A)]
 \item either $\gamma_{1, 2}$ and $\gamma_{2, 2}$ are zero, that is, equation \eqref{Eq_nontriv_deg6} has the form 
\[
  f(\alpha_{1}x_{1}+\alpha_{2}x_{2}, \beta_{1}y_{1})
    = \gamma_{1, 1}f(x_{1},y_{1})+\gamma_{2, 1}f(x_{2},y_{1})
  \qquad
 \left(x_{1}, x_{2}\in X, y_{1}\in Y\right)
\]
and the identities 
\[
 A(\alpha_{1}x, \beta_{1}y)+\chi(\alpha_{1}x)
 =
 \gamma_{1, 1}A(x, y)+\gamma_{1, 1}\chi(x)
 \qquad 
 \left(x\in X, y\in Y\right)
\]
and
\[
 A(\alpha_{2}x, \beta_{1}y)+\chi(\alpha_{2}x)
 =
 \gamma_{2, 1}A(x, y)+\gamma_{2, 1}\chi(x)
 \qquad 
 \left(x\in X, y\in Y\right)
\]
have to hold. 
\item or $\gamma_{1, 2}$ or $\gamma_{2, 2}$ do not vanish simultaneously and then the mapping $A$ has a rather special form, namely there exists an 
additive function $a\colon X\to \mathbb{K}$ such that 
\[
 A(x, y)= a(x)
 \qquad 
 \left(x\in X\right)
\]
and therefore 
\[
 f(x, y)= a(x)+\chi(x)+\zeta (y)
 \qquad
 \left(x\in X, y\in Y\right)
\]
where the identities 
\[
 a(\alpha_{1}x)+\chi(\alpha_{1}x)
 =
 \gamma_{1, 1}a(x)+\gamma_{1, 1}\chi(x)
 \qquad 
 \left(x\in X, y\in Y\right)
\]
and 
\[
 a(\alpha_{2}x)+\chi(\alpha_{2}x)
 =
 \gamma_{2, 1}a(x)+\gamma_{2, 1}\chi(x)
 \qquad 
 \left(x\in X, y\in Y\right)
\]
have to hold. 
\end{enumerate}
\end{prop}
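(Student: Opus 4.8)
The plan is to follow the same two-step strategy that handled Propositions \ref{Prop_nontriv1}--\ref{Prop_nontriv4}: first import the structural form of $f$ from the corresponding multi-function degenerate result, and then substitute that form back into \eqref{Eq_nontriv_deg6} and read off the algebraic restrictions on the parameters that are forced by the single-function constraint $f_{i,j}=\gamma_{i,j}f$. The structural input here is Proposition \ref{Prop6}.

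First I would exploit that the left-hand side of \eqref{Eq_nontriv_deg6} does not depend on $y_2$. Setting $y_2=0$ turns \eqref{Eq_nontriv_deg6} into an equation of exactly the shape treated in Proposition \ref{Prop6}, with the roles of $f_{1,1},f_{2,1}$ played by $\gamma_{1,1}f,\gamma_{2,1}f$ and the boundary terms $f_{1,2}(\cdot,0),f_{2,2}(\cdot,0)$ by $\gamma_{1,2}f(\cdot,0),\gamma_{2,2}f(\cdot,0)$. Since $\alpha_1,\alpha_2,\beta_1\neq 0$, after the harmless rescaling $x_i\mapsto x_i/\alpha_i$, $y_1\mapsto y_1/\beta_1$ inside $f$ the Pexider argument of Proposition \ref{Prop6} applies and yields $f(x,y)=A(x,y)+\chi(x)+\zeta(y)$, where $A$ is additive in its first variable, $\chi=f(\cdot,0)$ is additive and $\zeta=f(0,\cdot)$, both vanishing at zero; I would normalize $A$ so that $A(x,0)=A(0,y)=0$.

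The heart of the argument is the second step. Substituting this representation into the \emph{full} equation \eqref{Eq_nontriv_deg6} and using additivity of $A$ in its first slot and of $\chi$, both sides split into terms depending on disjoint groups of the independent variables $x_1,x_2,y_1,y_2$. Because the normalization makes every mixed term $A(x_i,y_j)$ vanish on the coordinate axes, the pure-$x_1$, pure-$x_2$, pure-$y_1$, pure-$y_2$ and the four mixed components can be compared separately. This produces the whole system of constraints: the pure-$y_2$ part gives $(\gamma_{1,2}+\gamma_{2,2})\zeta\equiv 0$; the pure-$y_1$ part gives $\zeta(\beta_1 y)=(\gamma_{1,1}+\gamma_{2,1})\zeta(y)$; the two mixed terms carrying $y_2$ force $\gamma_{1,2}A\equiv 0$ and $\gamma_{2,2}A\equiv 0$; and the remaining mixed and pure components give the transformation rules of $A$ and $\chi$ under the scalings $\alpha_1,\alpha_2,\beta_1$.

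The dichotomy A)/B) is then immediate from $\gamma_{1,2}A\equiv 0\equiv\gamma_{2,2}A$: if $\gamma_{1,2}=\gamma_{2,2}=0$ nothing constrains $A$, and the surviving mixed and pure identities assemble into the two stated relations for $A+\chi$; whereas if $\gamma_{1,2}$ or $\gamma_{2,2}$ is nonzero the normalized $A$ must vanish, so in any (non-normalized) decomposition $A$ can depend on $x$ only, i.e. $A(x,y)=a(x)$ with $a$ additive, and the pure-$x$ identities give the corresponding relations for $a+\chi$. The converse direction is a routine verification that any data of the asserted form solves \eqref{Eq_nontriv_deg6}. I expect the main obstacle to be the clean separation of variables in the substituted equation — keeping track of which constant multiplies each component and ensuring that the mixed $A$-terms do not contaminate the pure one-variable terms — together with the care needed to phrase the conclusion in a decomposition-independent way, since the representation $f=A+\chi+\zeta$ is not unique.
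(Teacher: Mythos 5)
Your proposal follows the paper's own route: invoke Proposition \ref{Prop6} for the representation $f=A+\chi+\zeta$, substitute it back into \eqref{Eq_nontriv_deg6}, and isolate the constraints by setting suitable variables to zero — the pure-$y_2$ and pure-$y_1$ parts give the two $\zeta$-identities, the mixed $y_2$-terms give $\gamma_{1,2}A\equiv\gamma_{2,2}A\equiv 0$ and hence the dichotomy A)/B). The explicit normalization $A(x,0)=A(0,y)=0$ is a harmless bookkeeping choice (it already follows from the construction in Proposition \ref{Prop6}), so the argument is essentially identical to the paper's.
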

\begin{proof}
 Using Proposition \ref{Prop6} we immediately get that there exists a mapping 
 $A\colon X\times Y\to \mathbb{K}$ and there are functions $\chi\colon X\to \mathbb{K}$ and 
 $\zeta\colon Y\to \mathbb{K}$ vanishing at zero such that $\chi$ is additive and  
 \[
  f(x, y)=A(x, y)+\chi(x)+\zeta(y)
  \qquad 
 \left(x\in X, y\in Y\right). 
 \]
Using that $A$ is additive in its first variable and equation \eqref{Eq_nontriv_deg6} we derive that 
\begin{multline}\label{Proof_Prop_nontriv5}
  A(\alpha_{1}x_{1}, \beta_{1}y_{1})+A(\alpha_{2}x_{2}, \beta_{1}y_{1})+\chi(\alpha_{1}x_{1}+\alpha_{2}x_{2})+\zeta(\beta_{1}y_{1})
  \\
    = \gamma_{1, 1}A(x_{1},y_{1})+\gamma_{1, 1}\chi(x_{1})+\gamma_{1, 1}\zeta(y_{1})
    +\gamma_{1, 2}A(x_{1}, y_{2})+\gamma_{1, 2}\chi(x_{1})+\gamma_{1, 2}\zeta(y_{2})
    \\
    +\gamma_{2, 1}A(x_{2},y_{1})+\gamma_{2, 1}\chi(x_{2})+\gamma_{2, 1}\zeta(y_{1})
    +\gamma_{2, 2}A(x_{2}, y_{2})+\gamma_{2, 2}\chi(x_{2})+\gamma_{2, 2}\zeta(y_{2})
    \\
 \left(x_{1}, x_{2}\in X, y_{1}, y_{2}\in Y\right)
\end{multline}
Observe that this equation with $x_{1}=x_{2}=y_{1}=0$ implies that 
\[
 \left(\gamma_{1, 2}+\gamma_{2, 2}\right)\zeta(y_{2})=0 
 \qquad 
 \left(y_{2}\in Y\right), 
\]
so $\gamma_{1, 2}+\gamma_{2, 2}=0$ or the function $\zeta$ is identically zero. 
Similarly, equation \eqref{Proof_Prop_nontriv5} yields with $x_{1}=x_{2}=y_{2}=0$ that 
\[
 \zeta(\beta_{1}y_{1})= (\gamma_{1, 1}+\gamma_{2, 1})\zeta(y_{1}) 
 \qquad 
 \left(y_{1}\in Y\right), 
\]
here while proving the last two identities we used that $\chi(0)=\zeta(0)=0$ (cf. the proof of Proposition \ref{Prop6})
and the fact that $A(0, y)=0$ for all $y\in Y$ due to that $A$ is additive in its first variable. 
Put $x_{2}=y_{1}=0$ into \eqref{Proof_Prop_nontriv5} to receive that 
\[
  A(\alpha_{1}x_{1}, 0)+\chi(\alpha_{1}x_{1})
  \\
    = \gamma_{1, 1}A(x_{1},0)+\gamma_{1, 1}\chi(x_{1})
    +\gamma_{1, 2}A(x_{1}, y_{2})+\gamma_{1, 2}\chi(x_{1})
    \qquad
 \left(x_{1}, x_{2}\in X, y_{1}, y_{2}\in Y\right)
\]
or in other words, 
\[
 -\gamma_{1, 2}A(x_{1}, y_{2}) 
  \\
    = \gamma_{1, 1}A(x_{1},0)+\gamma_{1, 1}\chi(x_{1})
    +\gamma_{1, 2}\chi(x_{1})
    -A(\alpha_{1}x_{1}, 0)-\chi(\alpha_{1}x_{1})
    \qquad 
 \left(x_{1}\in X, y_{2}\in Y\right)
\]
Similarly equation \eqref{Proof_Prop_nontriv5} with $x_{1}=y_{1}=0$ yields that 
\[
  A(\alpha_{2}x_{2}, 0)+\chi(\alpha_{2}x_{2})
  = \gamma_{2, 1}A(x_{2},0)+\gamma_{2, 1}\chi(x_{2})
    +\gamma_{2, 2}A(x_{2}, y_{2})+\gamma_{2, 2}\chi(x_{2})
  \qquad 
 \left(x_{2}\in X, y_{2}\in Y\right)
\]
or equivalently
\[
 -\gamma_{2, 2}A(x_{2}, y_{2})
 = \gamma_{2, 1}A(x_{2},0)+\gamma_{2, 1}\chi(x_{2})
    +\gamma_{2, 2}\chi(x_{2})
    -A(\alpha_{2}x_{2}, 0)-\chi(\alpha_{2}x_{2})
  \qquad 
 \left(x_{2}\in X, y_{2}\in Y\right). 
\]
From this latter two identities the following alternatives can be deduced
\begin{enumerate}[A)]
 \item either $\gamma_{1, 2}$ and $\gamma_{2, 2}$ are zero and 
 equation \eqref{Eq_nontriv_deg6} has the form 
\[
  f(\alpha_{1}x_{1}+\alpha_{2}x_{2}, \beta_{1}y_{1})
    = \gamma_{1, 1}f(x_{1},y_{1})+\gamma_{2, 1}f(x_{2},y_{1})
  \qquad
 \left(x_{1}, x_{2}\in X, y_{1}\in Y\right)
\]
and the identities 
\[
 A(\alpha_{1}x, \beta_{1}y)+\chi(\alpha_{1}x)
 =
 \gamma_{1, 1}A(x, y)+\gamma_{1, 1}\chi(x)
 \qquad 
 \left(x\in X, y\in Y\right)
\]
as well as 
\[
 A(\alpha_{2}x, \beta_{1}y)+\chi(\alpha_{2}x)
 =
 \gamma_{2, 1}A(x, y)+\gamma_{2, 1}\chi(x)
 \qquad 
 \left(x\in X, y\in Y\right)
\]
follow immediately from \eqref{Proof_Prop_nontriv5} with $x_{2}=y_{2}=0$ and $x_{1}=y_{2}=0$, respectively. 
 \item or the two-variable mapping $A$ can be represented as 
 \[
  A(x, y)= a(x) 
  \qquad 
  \left(x\in X, y\in Y\right). 
 \]
$A$ being additive in its first variable, this is possible if and only if $a\colon X\to \mathbb{K}$ is additive. 
This means that 
\[
 f(x, y)=a(x)+\chi(x)+\zeta(y)
 \qquad 
 \left(x\in X, y\in Y\right). 
\]
Using this representation and equation \eqref{Prop_nontriv5} first with $x_{2}=y_{2}=0$ and after that with  $x_{1}=y_{2}=0$ 
we get the identities 
\[
 a(\alpha_{1}x)+\chi(\alpha_{1}x)
 =
 \gamma_{1, 1}a(x)+\gamma_{1, 1}\chi(x)
 \qquad 
 \left(x\in X, y\in Y\right)
\]
and 
\[
 a(\alpha_{2}x)+\chi(\alpha_{2}x)
 =
 \gamma_{2, 1}a(x)+\gamma_{2, 1}\chi(x)
 \qquad 
 \left(x\in X, y\in Y\right). 
\]
\end{enumerate}

\end{proof}

\subsection{The non-degenerate case}

In view of the above results, now we can focus on the case
$\alpha_{1}, \alpha_{2}, \beta_{1}, \beta_{2}\neq 0$ and investigate
the existence of nontrivial solutions of functional equation
\begin{multline*}
 f\left(\alpha_{1}\,x_{1}+\alpha_{2}\,x_{2} , \beta_{1}\,y_{1}+\beta_{2}\,y_{2}\right)
 \\
  =\gamma_{1, 1}f(x_{1},y_{1})+\gamma_{1, 2}f(x_{1},y_{2})+\gamma_{2, 1}f(x_{2},y_{1})+\gamma_{2 ,2}f(x_{2},y_{2})
 \qquad
 \left(x_{1}, x_{2}\in X, y_{1}, y_{2}\in Y\right),
\end{multline*}
where for $i, j\in \left\{1, 2\right\}$, the constants $\gamma_{i,j}\in \mathbb{K}$ are given.

Here we will make use of the results of Section \ref{Sec_nondeg}, 
therefore (as in Section \ref{Sec_nondeg}) we always assume that the characteristic of the field $\mathbb{K}$ is
\emph{different} from $2$. As a direct application of Proposition \ref{Prop_nondeg} we derive the following. 

\begin{prop}\label{Prop_nontriv_nondeg}
 Let $X$ and $Y$ be linear spaces over the field
 $\mathbb{K}$, $\alpha_{i}, \beta_{j}, \gamma_{i, j}\in \mathbb{K}$, $i, j=1, 2$ be given constants and
 $f\colon X\times Y\to \mathbb{K}$ be a function such that
 \begin{multline}
  f\left(\alpha_{1}\,x_{1}+\alpha_{2}\,x_{2} , \beta_{1}\,y_{1}+\beta_{2}\,y_{2}\right)
 \\
  =\gamma_{1, 1}f(x_{1},y_{1})+\gamma_{1, 2}f(x_{1},y_{2})+\gamma_{2, 1}f(x_{2},y_{1})+\gamma_{2 ,2}f(x_{2},y_{2})
 \qquad
 \left(x_{1}, x_{2}\in X, y_{1}, y_{2}\in Y\right).
 \end{multline}
Then and only then, there exist a bi-additive function $A\colon
X\times Y\to \mathbb{K}$ and additive functions $\chi\colon X\to
\mathbb{K}$ and $\zeta\colon Y\to \mathbb{K}$ such that
\[
 f(x, z)= A(x, z)+\chi(x)+\zeta(z)
 \qquad
 \left(x\in X, z\in Y\right).
\]
Furthermore the following identities also have to be fulfilled
\[
 \begin{array}{rcl}
  \chi(\alpha_{1}x)&=&\left(\gamma_{1, 1}+\gamma_{1, 2}\right)\chi(x)\\
  \chi(\alpha_{2}x)&=& \left(\gamma_{2, 1}+\gamma_{2, 2}\right)\chi(x)\\
  \zeta(\beta_{1}z)&=&\left(\gamma_{1, 1}+\gamma_{2, 1}\right)\zeta(z)\\
  \zeta(\beta_{2}z)&=&\left(\gamma_{1, 2}+\gamma_{2, 2}\right)\zeta(z)\\
  A(\alpha_{i}x, \beta_{j}z)&=& \gamma_{i, j}A(x, y)
 \end{array}
\qquad \left(x\in X, z\in Y, i, j=1, 2\right).
\]
\end{prop}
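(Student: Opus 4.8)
The plan is to reduce the equation to the setting of Proposition \ref{Prop_nondeg} by a bijective rescaling of the variables, read off the structure of $f$ from that proposition, and then recover the scalar identities by making the auxiliary single-variable functions explicit.

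First, since we are in the non-degenerate case $\alpha_{1}, \alpha_{2}, \beta_{1}, \beta_{2}\neq 0$, each of the maps $x\mapsto \alpha_{i}x$ on $X$ and $y\mapsto \beta_{j}y$ on $Y$ is a bijection. Hence, substituting $u_{i}=\alpha_{i}x_{i}$ and $v_{j}=\beta_{j}y_{j}$ and letting the new variables range freely, I would rewrite the equation in the form
\[
f(u_{1}+u_{2}, v_{1}+v_{2})=\sum_{i,j=1}^{2}f_{i,j}(u_{i},v_{j}),
\qquad
f_{i,j}(u,v):=\gamma_{i,j}\,f\!\left(\tfrac{u}{\alpha_{i}},\tfrac{v}{\beta_{j}}\right),
\]
which is precisely equation \eqref{Eq_non-deg} for the quintuple $f, f_{1,1}, f_{1,2}, f_{2,1}, f_{2,2}$. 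Proposition \ref{Prop_nondeg} (applicable since $\mathrm{char}(\mathbb{K})\neq 2$) then yields a bi-additive $A$, additive $\chi,\zeta$, and single-variable functions $\chi_{i,j},\zeta_{i,j}$ vanishing at $0$ such that $f(x,z)=A(x,z)+\chi(x)+\zeta(z)$ and $f_{i,j}(u,v)=A(u,v)+\chi_{i,j}(u)+\zeta_{i,j}(v)$, together with the compatibility relations $\chi=\chi_{1,1}+\chi_{1,2}=\chi_{2,1}+\chi_{2,2}$ and $\zeta=\zeta_{1,1}+\zeta_{2,1}=\zeta_{1,2}+\zeta_{2,2}$.

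The key step is to identify the auxiliary functions explicitly. Putting $v=0$ in $f_{i,j}(u,v)=A(u,v)+\chi_{i,j}(u)+\zeta_{i,j}(v)$ and using $A(u,0)=0=\zeta_{i,j}(0)$ gives $\chi_{i,j}(u)=f_{i,j}(u,0)=\gamma_{i,j}\,\chi(u/\alpha_{i})$; symmetrically $\zeta_{i,j}(v)=\gamma_{i,j}\,\zeta(v/\beta_{j})$. Substituting these back and cancelling the single-variable parts leaves $A(u,v)=\gamma_{i,j}A(u/\alpha_{i},v/\beta_{j})$, that is, $A(\alpha_{i}x,\beta_{j}z)=\gamma_{i,j}A(x,z)$. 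Feeding the explicit $\chi_{i,j},\zeta_{i,j}$ into the two compatibility chains then produces the four scalar identities: for instance $\chi(u)=(\gamma_{1,1}+\gamma_{1,2})\chi(u/\alpha_{1})$, which upon writing $u=\alpha_{1}x$ becomes $\chi(\alpha_{1}x)=(\gamma_{1,1}+\gamma_{1,2})\chi(x)$, and likewise for the other three. The converse is a direct verification: substituting $f=A+\chi+\zeta$ into the equation and expanding by bi-additivity of $A$ and additivity of $\chi,\zeta$, the five displayed identities collapse both sides to the common expression $\sum_{i,j}\gamma_{i,j}A(x_{i},y_{j})+\sum_{i,j}\gamma_{i,j}\chi(x_{i})+\sum_{i,j}\gamma_{i,j}\zeta(y_{j})$.

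The only point demanding care — and the main thing I would flag — is that over a general field $\mathbb{K}$ the additive maps $\chi,\zeta$ and the bi-additive map $A$ need \emph{not} be $\mathbb{K}$-homogeneous, so one must resist the temptation to pull the scalars $1/\alpha_{i},1/\beta_{j}$ out of $\chi(u/\alpha_{i})$ and the like. This is exactly why the conclusion is phrased through the relations $\chi(\alpha_{i}x)=(\,\cdots)\chi(x)$ rather than through homogeneity; keeping the rescaled arguments intact throughout is what makes the identification go through cleanly.
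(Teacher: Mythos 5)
Your proposal is correct and follows essentially the same route as the paper, which simply invokes Proposition \ref{Prop_nondeg} with the choice $f_{i,j}=\gamma_{i,j}f$ after the standard rescaling of variables; you have merely written out the details (identification of $\chi_{i,j}=\gamma_{i,j}\chi(\cdot/\alpha_{i})$, $\zeta_{i,j}=\gamma_{i,j}\zeta(\cdot/\beta_{j})$, and the resulting cancellation giving the semi-homogeneity of $A$) that the paper leaves implicit. Your closing caution about not assuming $\mathbb{K}$-homogeneity of $\chi$, $\zeta$, $A$ is exactly the right point to flag.
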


\begin{rem}
While investigating whether equation \eqref{Eq_non-trivial} admits
or not a non-trivial solution we always got three type of
conditions. One of them is a purely algebraic condition, namely we
have to check if the parameters $\gamma_{i, j}$ fulfill a system of
homogeneous, linear equations.

The second type is about the existence of a non-trivial
semi-homogeneous additive function. More precisely, this condition
is always of the following form: let $X$ be a linear space over the
field $\mathbb{K}$ and let $a\colon X\to \mathbb{K}$ be an additive
function such that
\[
 a(\alpha x)=\beta a(x)
 \qquad
 \left(x\in X\right)
\]
with certain fixed scalars $\alpha, \beta \in \mathbb{K}$. For which
values of $\alpha$ and $\beta$ will the function $a$ be non-trivial
(that is, non-identically zero)? This question was firstly
investigated in Dar\'{o}czy \cite{Dar61} if $X=\mathbb{K}=\mathbb{R}$. 
These results were later
generalized and  extended in the papers \cite{Kis15, KisLac15,
KisVar14, KisVarVin15, Ste99, VarVin15, VinVar15}.
To the best of our knowledge, this problem has not been investigated 
in case of fields with nonzero characteristic. To this 
we provide a solution in Subsection \ref{Subsec_prime}. 

Our third condition is similar to the second one, namely it concerns
the non-triviality of a semi-ho\-mo\-ge\-neous bi-additive function.
The attached existence problem will also be discussed in the last section. 
\end{rem}

\section{A necessary and sufficient condition for the existence of non-zero, bi-additive semi-homogeneous mappings}
\label{semibiadd}

According to the characteristic property
\[
A(\alpha_{i}x, \beta_{j}y)= \gamma_{i, j}A(x, y) \qquad  \left(x\in X, y\in Y, i, j=1, 2\right)
\]
of the bi-additive term in the solution (see Proposition \ref{Prop_nontriv_nondeg}), it is natural to
investigate the problem of the existence of such  non-zero mapping
over the field $\mathbb{K}$.

\begin{dfn}
Let $X$ and $Y$ be linear spaces over the field $\mathbb{K}$. 
An additive function $a\colon X\to \mathbb{K}$ is called 
\emph{semi-homogeneous} if there exist elements $\alpha$, $\beta \in \mathbb{K}$  such that 
\[
 a(\alpha x)= \beta a(x) 
 \qquad 
 \left(x\in X\right). 
\]

Similarly, a bi-additive function $A\colon X\times Y\to \mathbb{K}$ is called
\emph{semi-homogeneous} if there exist elements $\alpha$, $\beta$
and $\gamma$ in the field $\mathbb{K}$  such that
\begin{equation}
\label{semihomokos} A(\alpha x, \beta y)=\gamma A(x,y) \qquad (x\in
X, y\in Y).
\end{equation}
\end{dfn} 

\subsection{The case of fields with characteristic zero}

In this subsection we restrict ourselves to the case of fields with zero characteristic. 
According to this, let $\mathbb{K}$ be a field of characteristic zero.
Then it is an extension of the field $\mathbb {Q}$ of the rationals
and we can consider the subfields $\mathbb{Q}(\alpha)$ and
$\mathbb{Q}(\beta)$ in $\mathbb{K}$.

We will also use the following. 

\begin{prop}
 Let $\mathbb{K}$ be a field of characteristic zero. 
 Then $\mathbb{K}$ is embeddable into $\mathbb{C}$ if and only if 
 the transcendence degree of the field extension $\mathbb{K}/\mathbb{Q}$ is less than $\mathfrak{c}$. 
\end{prop}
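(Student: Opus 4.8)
The plan is to reduce the statement to Steinitz's classical theory of transcendence bases and algebraically closed fields, and to treat the two implications separately. Throughout I would write $\kappa=\mathrm{trdeg}(\mathbb{K}/\mathbb{Q})$ and lean on two standing facts. First, $\mathrm{trdeg}(\mathbb{C}/\mathbb{Q})=\mathfrak{c}$: the algebraic numbers form a countable subfield while $|\mathbb{C}|=\mathfrak{c}$, so any transcendence basis of $\mathbb{C}$ over $\mathbb{Q}$ has cardinality $\mathfrak{c}$. Second, Steinitz's extension theorem: an embedding of a field into an algebraically closed field prolongs along any algebraic extension of the source. Since passing to the algebraic closure leaves the transcendence degree unchanged, $\mathrm{trdeg}(\overline{\mathbb{K}}/\mathbb{Q})=\kappa$ as well, and $\mathbb{K}\hookrightarrow\mathbb{C}$ holds precisely when $\overline{\mathbb{K}}\hookrightarrow\mathbb{C}$.

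For the sufficiency direction I would argue constructively. Fix a transcendence basis $B\subset\mathbb{K}$ over $\mathbb{Q}$, so that $|B|=\kappa$, together with a transcendence basis $C\subset\mathbb{C}$ over $\mathbb{Q}$, so that $|C|=\mathfrak{c}$. The hypothesis $\kappa<\mathfrak{c}$ furnishes an injection $B\hookrightarrow C$; because $B$ is algebraically independent over $\mathbb{Q}$, this injection induces a $\mathbb{Q}$-embedding of the purely transcendental field $\mathbb{Q}(B)$ into $\mathbb{Q}(C)\subset\mathbb{C}$. Since $\mathbb{K}$ is algebraic over $\mathbb{Q}(B)$ and $\mathbb{C}$ is algebraically closed, Steinitz's extension theorem lets me prolong this embedding to all of $\mathbb{K}$, giving the desired $\mathbb{K}\hookrightarrow\mathbb{C}$. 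I note that this half uses only $\kappa\le\mathfrak{c}$, so the strict hypothesis is comfortably enough here.

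For the necessity direction, any embedding $\iota\colon\mathbb{K}\hookrightarrow\mathbb{C}$ carries a $\mathbb{Q}$-algebraically independent set to a $\mathbb{Q}$-algebraically independent set, so the image of a transcendence basis of $\mathbb{K}$ remains algebraically independent in $\mathbb{C}$; this yields at once $\kappa\le\mathrm{trdeg}(\mathbb{C}/\mathbb{Q})=\mathfrak{c}$. The place where I expect the real obstacle to sit is the \emph{strictness} of this bound, that is, upgrading $\kappa\le\mathfrak{c}$ to $\kappa<\mathfrak{c}$: one must confront the borderline regime $\kappa=\mathfrak{c}$, in which $\overline{\mathbb{K}}$ shares its transcendence degree with $\mathbb{C}$. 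This equality case is exactly what separates the statement as worded from the weaker inequality that the preceding arguments deliver directly, so it is precisely the point I would scrutinise most carefully — clarifying which cardinal comparison is intended and whether the borderline value is to be admitted — before declaring the necessity half complete.
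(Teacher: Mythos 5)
Your proposal cannot be matched against an argument in the paper, because the paper states this proposition without any proof: it is invoked as a known fact, and in the sequel it is only used through the remark that a finitely generated extension of $\mathbb{Q}$ embeds into $\mathbb{C}$ (this is what the proof of Theorem \ref{Thm3} needs for the finitely generated field $\mathbb{L}$). Judged on its own, your argument is the standard one and is correct: the sufficiency half via a transcendence basis $B$ of $\mathbb{K}$, an injection of $B$ into a transcendence basis of $\mathbb{C}$, and Steinitz's extension theorem for the algebraic extension $\mathbb{K}/\mathbb{Q}(B)$ works verbatim and, as you note, needs only $\mathrm{trdeg}(\mathbb{K}/\mathbb{Q})\le\mathfrak{c}$; the necessity half correctly gives $\mathrm{trdeg}(\mathbb{K}/\mathbb{Q})\le\mathrm{trdeg}(\mathbb{C}/\mathbb{Q})=\mathfrak{c}$. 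The obstacle you flag in the necessity direction is not a defect of your proof but of the statement as worded: with a strict inequality the ``only if'' direction is simply false, since $\mathbb{K}=\mathbb{C}$ (or $\mathbb{K}=\mathbb{R}$) embeds into $\mathbb{C}$ while its transcendence degree over $\mathbb{Q}$ equals $\mathfrak{c}$. The proposition should be read with ``at most $\mathfrak{c}$'' (equivalently $|\mathbb{K}|\le\mathfrak{c}$), and under that reading your two halves constitute a complete proof; the discrepancy is immaterial for the paper, whose only application concerns finitely generated, hence countable, fields.
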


\begin{rem}
From the previous proposition we immediately get that if $\mathbb{K}$ is finitely generated over 
$\mathbb{Q}$, that is if $\mathbb{K}=\mathbb{Q}(\alpha_{1}, \ldots, \alpha_{n})$, then 
$\mathbb{K}$ is embeddable into $\mathbb{C}$. 
\end{rem}

\begin{thm}\label{Thm3} Let $\mathbb{K}$ be a field of characteristic zero and suppose
that $X$ and $Y$ are linear spaces over $\mathbb{K}$. There exists a
non-zero bi-additive mapping $A\colon X\times Y\to \mathbb{K}$
satisfying \eqref{semihomokos} if and only if $\gamma$ can be
written as the product of algebraically conjugated elements to
$\alpha$ and $\mathbb{\beta}$ over $\mathbb{Q}$, respectively.
\end{thm}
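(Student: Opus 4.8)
The plan is to reduce the problem to the case $X=Y=\mathbb{K}$ and then to recognise the defining relation \eqref{semihomokos} as an eigenvalue problem for the multiplication operator on $\mathbb{Q}(\alpha)\otimes_{\mathbb{Q}}\mathbb{Q}(\beta)$. First I would record the reduction. Given non-zero $\mathbb{K}$-linear functionals $u\colon X\to\mathbb{K}$ and $v\colon Y\to\mathbb{K}$ and a bi-additive $A_{0}\colon\mathbb{K}\times\mathbb{K}\to\mathbb{K}$ with $A_{0}(\alpha s,\beta t)=\gamma A_{0}(s,t)$, the map $A(x,y)=A_{0}(u(x),v(y))$ is bi-additive, non-zero and satisfies \eqref{semihomokos}, since $u(\alpha x)=\alpha u(x)$ and $v(\beta y)=\beta v(y)$. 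Conversely, restricting a non-zero solution $A$ on $X\times Y$ to a line $\mathbb{K}x_{0}\times\mathbb{K}y_{0}$ with $A(x_{0},y_{0})\neq 0$ produces a non-zero solution on $\mathbb{K}\times\mathbb{K}$. Hence it suffices to treat $X=Y=\mathbb{K}$, where a solution is exactly a $\mathbb{Q}$-bilinear form $\mathbb{K}\times\mathbb{K}\to\mathbb{K}$.

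For sufficiency, suppose $\gamma=\alpha'\beta'$ with $\alpha'$ conjugate to $\alpha$ and $\beta'$ conjugate to $\beta$ over $\mathbb{Q}$. Fix an embedding $\mathbb{K}\hookrightarrow\mathbb{C}$ (available by the preceding Proposition, in any case for the finitely generated subfield $\mathbb{Q}(\alpha,\beta,\gamma)$). Being conjugates, $\alpha\mapsto\alpha'$ and $\beta\mapsto\beta'$ extend to field embeddings $\sigma\colon\mathbb{Q}(\alpha)\to\mathbb{C}$ and $\tau\colon\mathbb{Q}(\beta)\to\mathbb{C}$. Choosing a $\mathbb{Q}(\alpha)$-basis and a $\mathbb{Q}(\beta)$-basis of $\mathbb{K}$, I extend $\sigma,\tau$ to additive maps $\widetilde{\sigma},\widetilde{\tau}\colon\mathbb{K}\to\mathbb{C}$ that are $\sigma$- respectively $\tau$-semilinear, i.e. $\widetilde{\sigma}(\lambda x)=\sigma(\lambda)\widetilde{\sigma}(x)$ for $\lambda\in\mathbb{Q}(\alpha)$ and analogously for $\widetilde{\tau}$, normalising $\widetilde{\sigma}(1)=\widetilde{\tau}(1)=1$. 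Then $A^{\mathbb{C}}(x,y)=\widetilde{\sigma}(x)\widetilde{\tau}(y)$ satisfies $A^{\mathbb{C}}(\alpha x,\beta y)=\sigma(\alpha)\tau(\beta)A^{\mathbb{C}}(x,y)=\gamma A^{\mathbb{C}}(x,y)$. To return to $\mathbb{K}$-valued data, fix a $\mathbb{K}$-linear projection $P\colon\mathbb{C}\to\mathbb{K}$ with $P(1)=1$ (split the $\mathbb{K}$-vector space $\mathbb{C}$ along $\mathbb{K}\cdot 1$) and set $A_{0}=P\circ A^{\mathbb{C}}$. Since $\gamma\in\mathbb{K}$ and $P$ is $\mathbb{K}$-linear, $A_{0}(\alpha x,\beta y)=P(\gamma A^{\mathbb{C}}(x,y))=\gamma A_{0}(x,y)$, while $A_{0}(1,1)=P(1)=1$ shows $A_{0}\neq 0$.

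For necessity, assume $A_{0}\colon\mathbb{K}\times\mathbb{K}\to\mathbb{K}$ is a non-zero solution; one first notes $\gamma\neq 0$, since $\alpha,\beta\neq 0$ make $(s,t)\mapsto(\alpha s,\beta t)$ surjective. Picking $s_{0},t_{0}$ with $A_{0}(s_{0},t_{0})\neq 0$ and restricting $A_{0}$ to the cyclic subspaces $\mathbb{Q}[\alpha]s_{0}$ and $\mathbb{Q}[\beta]t_{0}$ yields a non-zero $\mathbb{Q}$-bilinear $B\colon\mathbb{Q}(\alpha)\times\mathbb{Q}(\beta)\to\mathbb{K}$ with $B(\alpha s,\beta t)=\gamma B(s,t)$. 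Viewing $B$ as a $\mathbb{Q}$-linear functional on $E=\mathbb{Q}(\alpha)\otimes_{\mathbb{Q}}\mathbb{Q}(\beta)$, the relation reads $B(\theta\,\xi)=\gamma B(\xi)$ with $\theta=\alpha\otimes\beta$, where $L_{\theta}$ denotes multiplication by $\theta$. Iterating gives $B(\chi(\theta)\xi)=\chi(\gamma)B(\xi)$ for every $\chi\in\mathbb{Q}[x]$; taking $\chi$ to be the minimal polynomial of $L_{\theta}$ (so $\chi(\theta)=0$) forces $\chi(\gamma)=0$. Since $E$ is an \'etale $\mathbb{Q}$-algebra, $E\otimes_{\mathbb{Q}}\overline{\mathbb{Q}}\cong\overline{\mathbb{Q}}^{\,mn}$ (with $m=[\mathbb{Q}(\alpha):\mathbb{Q}]$, $n=[\mathbb{Q}(\beta):\mathbb{Q}]$) and there $L_{\theta}$ is diagonal with entries the products $\alpha_{i}\beta_{j}$ of the conjugates; hence the roots of $\chi$ are exactly these products, and $\gamma=\alpha_{i}\beta_{j}$ for some $i,j$, as claimed.

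The main obstacle is the interface between the value field $\mathbb{K}$ and the conjugates, which generically lie in a proper extension of $\mathbb{K}$ inside $\mathbb{C}$: the semilinear product $A^{\mathbb{C}}$ is naturally $\mathbb{C}$-valued, and it is the $\mathbb{K}$-linear projection $P$ (equivalently a trace $\mathrm{Tr}_{\mathbb{M}/\mathbb{K}}$ when the conjugates generate a finite extension $\mathbb{M}$ of $\mathbb{K}$) that pushes the construction back into $\mathbb{K}$ while preserving \eqref{semihomokos}, precisely because $\gamma\in\mathbb{K}$. A secondary subtlety is the case where $\alpha$ or $\beta$ is transcendental: then $E$ is infinite-dimensional and the minimal-polynomial step collapses, but in that regime every non-zero $\gamma$ is a product of transcendental conjugates — one simply chooses the embeddings $\sigma,\tau$ with $\sigma(\alpha)\tau(\beta)=\gamma$ — so the characterisation persists and the construction above applies verbatim.
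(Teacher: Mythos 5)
Your reduction to $X=Y=\mathbb{K}$ and your sufficiency argument are sound and essentially follow the paper (Lemma~\ref{lemmakey} and the semi-linear extension; the paper takes the conjugates $\delta_1(\alpha),\delta_2(\beta)$ to lie in $\mathbb{K}$ and so builds $A$ with values in $\mathbb{K}$ directly, which lets you dispense with the detour through $\mathbb{C}$ and the projection $P$ — a detour that in any case presupposes an embedding $\mathbb{K}\hookrightarrow\mathbb{C}$, i.e.\ a transcendence-degree restriction the theorem does not impose). Your necessity argument in the algebraic case is a genuinely different and attractive route: where the paper invokes spectral analysis in a variety over a countable field (Laczkovich--Sz\'ekelyhidi) to extract a multiplicative exponential $M(u,v)=\delta_1(u)\delta_2(v)$, you read the relation as saying that $\gamma$ is an eigenvalue of the transpose of multiplication by $\alpha\otimes\beta$ on $\mathbb{Q}(\alpha)\otimes_{\mathbb{Q}}\mathbb{Q}(\beta)$, and identify the spectrum after base change to $\overline{\mathbb{Q}}$. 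That is more elementary and self-contained than the paper's argument — but only where it applies.

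The genuine gap is the transcendental case of the necessity direction, and your proposed patch is not merely incomplete but false. You assert that when $\alpha$ (say) is transcendental, \emph{every} non-zero $\gamma$ is a product of conjugates, ``one simply chooses the embeddings with $\sigma(\alpha)\tau(\beta)=\gamma$.'' Take $\alpha$ transcendental, $\beta=1$ and $\gamma=2$: the only homomorphism $\tau$ on $\mathbb{Q}(\beta)=\mathbb{Q}$ is the identity, so one would need an injective homomorphism $\sigma$ of $\mathbb{Q}(\alpha)$ with $\sigma(\alpha)=2$, which is impossible since $\sigma(\alpha)$ must again be transcendental. Worse, if your claim were true, your own sufficiency construction would then manufacture a non-zero solution for this $(\alpha,\beta,\gamma)$; but none exists, because $b(s):=B(s,1)$ would be a non-zero $\mathbb{Q}$-linear map on $\mathbb{Q}(\alpha)$ vanishing on the image of $L_{\alpha}-2\,\mathrm{id}$, i.e.\ on $(\alpha-2)\mathbb{Q}(\alpha)=\mathbb{Q}(\alpha)$. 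So the characterisation in the transcendental regime is \emph{not} ``everything non-zero works,'' and your minimal-polynomial mechanism gives no substitute, since $L_{\alpha\otimes\beta}$ has no minimal polynomial when $\mathbb{Q}(\alpha)\otimes_{\mathbb{Q}}\mathbb{Q}(\beta)$ is infinite dimensional. This is precisely the point where the paper's spectral-analysis argument earns its keep: the variety of solutions over the countable field $\mathbb{L}=\mathbb{Q}(\alpha,\beta,A(x,y),\gamma)$ is non-trivial, spectral analysis produces an exponential element, and the resulting factorisation $\gamma=\delta_1(\alpha)\delta_2(\beta)$ into injective homomorphisms is obtained uniformly, with no case distinction between algebraic and transcendental $\alpha,\beta$. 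To repair your proof you would need either to import that argument for the transcendental case or to supply a correct direct analysis of the relation $B\circ L_{\alpha\otimes\beta}=\gamma B$ on the infinite-dimensional algebra.
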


\begin{proof} Suppose that $\gamma$ can be written as the product of algebraically conjugated elements to $\alpha$ and $\mathbb{\beta}$ over $\mathbb{Q}$, respectively. This means that $\gamma=\delta_1(\alpha)\delta_2(\beta),$
where $\delta_1 \colon \mathbb{Q}(\alpha)\to \mathbb{K}$ and
$\delta_2 \colon \mathbb{Q}(\beta)\to \mathbb{K}$ are injective
homomorphisms. Taking $X$ and $Y$ as linear spaces over
$\mathbb{Q}(\alpha)$ and $\mathbb{Q}(\beta)$, respectively, we can
write that
\[
x=\sum_{i\in I}\frac{p_i(\alpha)}{q_i(\alpha)}x_i \quad \text{and}\quad  y=\sum_{j\in J}\frac{r_j(\beta)}{s_j(\beta)}y_j,
\]
where
\begin{enumerate}[(i)]
\item $I$ and $J$ are finite index sets such that $|I|=k$ and $|J|=l$,
\item $p_i$, $q_i\in \mathbb{Q}[x]$ for any $i\in I$,
\item $r_j$, $s_j\in \mathbb{Q}[x]$ for any $j\in J$,
\item $x_1, \ldots, x_k$ belong to a basis of $X$ as a linear space over $\mathbb{Q}(\alpha)$,
\item $y_1, \ldots, y_l$ belong to a basis of $Y$ as a linear space over $\mathbb{Q}(\beta)$.
\end{enumerate}
The mapping $A$ is defined by the formula of the semi-linear
extension
\[
A(x,y)=\sum_{i=1}^k
\sum_{j=1}^l\frac{p_i(\delta_1(\alpha))}{q_i(\delta_1(\alpha))}\frac{r_j(\delta_2(\beta))}{s_j(\delta_2(\beta))}A(x_i,
y_j)
\]
and the values $A(x_i, y_j)$ are not all zero.  

Conversely, suppose that there exists a
non-zero bi-additive mapping $A\colon X\times Y\to \mathbb{K}$
satisfying \eqref{semihomokos}. Let us fix elements $x\in X$ and
$y\in Y$ such that $A(x,y)\neq 0$. Taking the field
$\mathbb{L}=\mathbb{Q}(\alpha, \beta, A(x,y),\gamma)$ we can define
the bi-additive mapping $B\colon \mathbb{L}\times \mathbb{L}\to \mathbb{C}$ as
\[
B(u,v)=A(ux,vy) \qquad \left(u, v\in \mathbb{L}\right). 
\]
It can be easily seen that
\begin{equation}
\label{szemihomokos01} B(\alpha u, \beta v)=\gamma B(u,v)
\end{equation}
is fulfilled for arbitrary $u, v \in \mathbb{L}$.

Let $\mathbb{L}^{\ast}$ denote the multiplicative subgroup of
$\mathbb{L}$ and let $G=\mathbb{L}^*\times \mathbb{L}^*$ be the
group equipped with the pointwise multiplication. Then, for any
$(u_*,v_*)\in G$, the translate mapping, that is,
\[
(\tau_{(u^*,v^*)}B)(u,v)=B(uu^*,vv^*)
\]
also satisfies \eqref{szemihomokos01}. Let $V$ be the set of the
restrictions of bi-additive mappings of the form $B\colon\mathbb{L}\times \mathbb{L} \to \mathbb{C}$ satisfying
\eqref{szemihomokos01}. 
Then the set $V$ is closed with respect to the uniform convergence on finite sets and the field $\mathbb{L}$ is
countable. 
Therefore $V$ is a closed, translation invariant linear space, in other words, 
it is a variety over a field of finite transcendence degree.
From this we infer that spectral analysis holds in $V$, i.e., there
exists an exponential element in this variety, see Laczkovich--Székelyhidi \cite{LacSze05}. 
An exponential element in this variety is a bi-additive mapping
$M\colon \mathbb{L}\times \mathbb{L}\to \mathbb{C}$ satisfying
\eqref{szemihomokos01} such that
\[
M(uu_*,vv_*)=M(u,v)M(u_*,v_*).
\]
Using the notations $\delta_1(u)=M(u,1)$ and $\delta_2(v)=M(1,v)$,
it follows that $M(u,v)=\delta_1(u)\delta_2(v),$ where $\delta_1$
and $\delta_2$ are injective (field-) homomorphisms of $\mathbb{L}$.
Using property \eqref{szemihomokos01}
\[
\gamma M(u,v)=M(\alpha u, \beta v)=\delta_1(\alpha)\delta_2(\beta)M(u,v).
\]
Therefore $\gamma=\delta_1(\alpha)\delta_2(\beta)$ as  it was stated.
\end{proof}

\subsection{The case of finite fields of non-zero characteristic}\label{Subsec_prime}

If $\mathbb{K}$ is a field of characteristic different from zero then it is a \emph{field of prime characteristic}.  
First of all we collect those results from the theory of finite fields, that we intend to use subsequently. 
Here we rely on the monograph Lidl--Niederreiter \cite{LidNie86}. For any field $\mathbb{K}$, 
there is a minimal subfield, namely the \emph{prime field of $\mathbb{K}$}, 
which is the smallest subfield containing $1$. 
It is isomorphic either to $\mathbb{Q}$ (if the characteristic is zero), 
or to a finite field of prime order $\mathbb{Z}_{p}$ (in case $\mathrm{char}(\mathbb{K})=p$). Moreover, if $p$ is a prime and $n\in \mathbb{N}$ is arbitrary then 
up to an isomorphism there exists exactly one \emph{finite} field of order 
$q=p^{n}$. This field is nothing but the splitting field of the 
polynomial $x^{q}-x$ over $\mathbb{Z}_{p}$. This field is denoted by 
$\mathrm{GF}(q)$. 

Let now $a\colon X\to \mathbb{K}$ be a semi-homogeneous additive function, that is, assume that for the 
additive function $a$ we have 
\[
 a(\alpha x)= \beta a(x) 
 \qquad 
 \left(x\in X\right). 
\]

As we have seen in the proof of Theorem \ref{Thm3}, 
the problem of the existence of semi-homogeneous mappings defined on the linear space $X$ 
can be reduced to the problem of the existence of semi-homogeneous mappings defined on $\mathbb{K}$. 
It can be easily seen that the additivity automatically implies  
the homogeneity with respect to the multiplication by the elements of the prime field. 
By the argument of \cite{VarVin09}, it also follows that there exists an automorphism between 
the extensions of the prime field with $\alpha$ and $\beta$, respectively, 
such that it maps $\alpha$ into $\beta$. 
Conversely,  such an automorphism allows us to use the technique of the semi-linear extension to construct semi-homogeneous additive mappings. 
This criteria for the existence of semi-homogeneous additive mappings \emph{does not depend} on the characteristic of the fields 
but it is worth to investigate the problem of the subfields in $\mathbb{K}$ in some special cases as follows.

Let  $\varphi\colon \mathbb{K}\to \mathbb{K}$ be an automorphism of $\mathbb{K}$ with $\varphi(\beta)=\alpha$. 
Then 
\[
 (\varphi\circ a)(\alpha x)=\varphi(\beta)\cdot (\varphi\circ a)(x) = \alpha \cdot (\varphi\circ a)(x)
 \qquad 
 \left(x\in X\right). 
\]
This means that
\begin{enumerate}[(i)]
\item we have to guarantee the existence of an automorphism $\varphi\colon \mathrm{GF}(p^{n})\to \mathrm{GF}(p^{n})$ 
for which 
\[
 \varphi(\beta)= \alpha
\]
is satisfied;
\item we have to determine the homogeneity field (see Definition \ref{def_homfield}) of the additive mapping $\varphi\circ a\colon X\to \mathbb{K}$. 
\end{enumerate}

Suppose that $\mathbb{K}\simeq \mathrm{GF}(p^{n})$ for 
some prime $p$ and $n\in \mathbb{N}$.

To answer the above questions, for (i) we have to know the automorphism group of $\mathrm{GF}(p^{n})$, 
while for (ii)  we have to describe the sub-fields of $\mathrm{GF}(p^{n})$.

\begin{dfn}
Let $p$ be a prime and $n\in \mathbb{N}$. 
 By an \emph{automorphism $\varphi$ of $\mathrm{GF}(p^{n})$ over $\mathrm{GF}(p)$} we mean 
 an automorphism of $\mathrm{GF}(p^{n})$ that fixes the elements of $\mathrm{GF}(p)$. 
 More precisely, we require $\varphi$ to be a one-to-one mapping from $\mathrm{GF}(p^{n})$ 
 onto itself with 
 \[
 \begin{array}{rcl}
  \varphi(a+b)&=&\varphi(a)+\varphi(b) \\
 \varphi(ab)&=&\varphi(a)\varphi(b)
 \end{array}
 \qquad 
 \left(a, b\in \mathrm{GF}(p^{n})\right)
 \]
and 
\[
 \varphi(a)=a 
 \qquad 
 \left(a\in \mathrm{GF}(p)\right). 
\]
\end{dfn}

\begin{thm}
 Let $p$ be a prime and $n\in \mathbb{N}$. 
 The distinct automorphisms of $\mathrm{GF}(p^{n})$ over $GF(p)$ are exactly the mappings 
 $\varphi_{0}, \varphi_{1}, \ldots, \varphi_{n-1}$ defined by 
 \[
  \varphi_{j}(a)=a^{p^{j}} 
  \qquad 
  \left(a\in \mathrm{GF}(p^{n}), j=0, 1, \ldots, n-1\right). 
 \]
\end{thm}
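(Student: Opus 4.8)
The plan is to verify first that each $\varphi_j$ is an automorphism over $\mathrm{GF}(p)$, then that the $\varphi_0,\ldots,\varphi_{n-1}$ are pairwise distinct, and finally that there are no further automorphisms.

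First I would check that the Frobenius map $\varphi_1(a)=a^p$ is an automorphism over $\mathrm{GF}(p)$. Multiplicativity is immediate, while additivity follows from the binomial theorem together with the fact that in characteristic $p$ every coefficient $\binom{p}{k}$ with $0<k<p$ is divisible by $p$; hence $\varphi_1(a+b)=(a+b)^p=a^p+b^p$. As a nonzero field homomorphism $\varphi_1$ is injective, and being a self-map of the finite set $\mathrm{GF}(p^n)$ it is therefore bijective. By Fermat's little theorem $a^p=a$ for every $a\in \mathrm{GF}(p)\cong \mathbb{Z}_p$, so $\varphi_1$ fixes the prime field. Consequently each iterate $\varphi_j=\varphi_1\circ \cdots \circ \varphi_1$ ($j$ times), which acts as $a\mapsto a^{p^j}$, is again an automorphism over $\mathrm{GF}(p)$, with $\varphi_0=\mathrm{id}$.

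Next I would establish distinctness. If $\varphi_i=\varphi_j$ for some $0\le i<j\le n-1$, then $a^{p^i}=a^{p^j}$ for every $a\in \mathrm{GF}(p^n)$, so all $p^n$ elements of the field are roots of the nonzero polynomial $x^{p^j}-x^{p^i}$. But this polynomial has degree $p^j\le p^{n-1}<p^n$ and hence at most $p^{n-1}$ roots, a contradiction. Thus $\varphi_0,\ldots,\varphi_{n-1}$ are pairwise distinct.

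The main work lies in showing that these exhaust all automorphisms over $\mathrm{GF}(p)$. Since the multiplicative group $\mathrm{GF}(p^n)^{\ast}$ is cyclic (a finite subgroup of the multiplicative group of a field is cyclic), I would fix a generator $\xi$; then $\mathrm{GF}(p^n)=\mathrm{GF}(p)(\xi)$, and because $|\mathrm{GF}(p^n)|=p^n$ the minimal polynomial $m$ of $\xi$ over $\mathrm{GF}(p)$ has degree $n$. Any automorphism $\psi$ fixing $\mathrm{GF}(p)$ leaves the coefficients of $m$ invariant, hence sends $\xi$ to another root of $m$ in $\mathrm{GF}(p^n)$; moreover $\psi$ is completely determined by $\psi(\xi)$, since $\xi$ generates the field over the fixed prime subfield. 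As $m$ has at most $n$ roots, there are at most $n$ automorphisms over $\mathrm{GF}(p)$. Having already produced $n$ distinct ones, I conclude that $\varphi_0,\ldots,\varphi_{n-1}$ are exactly all of them. I expect this final counting step to be the crux: it hinges on the existence of a generating element $\xi$ and on its minimal polynomial having degree $n$, which is precisely where the cyclicity of $\mathrm{GF}(p^n)^{\ast}$ and the structure of $\mathrm{GF}(p^n)$ as a simple extension of the prime field are essential.
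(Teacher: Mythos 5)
Your proof is correct. The paper itself gives no proof of this theorem: it is quoted as a classical fact from the monograph of Lidl and Niederreiter on finite fields, so there is no argument in the paper to compare against. Your write-up is the standard textbook proof — Frobenius and its iterates give $n$ distinct automorphisms fixing $\mathrm{GF}(p)$ (distinctness via counting roots of $x^{p^{j}}-x^{p^{i}}$), and the upper bound of $n$ follows because $\mathrm{GF}(p^{n})=\mathrm{GF}(p)(\xi)$ for a generator $\xi$ of the cyclic group $\mathrm{GF}(p^{n})^{\ast}$, whose minimal polynomial has degree $n$ — and all steps are sound.
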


\begin{rem}
 In other words, the above theorem says that the automorphism group of 
 $\mathrm{GF}(p^{n})$ over $\mathrm{GF}(p)$ is a cyclic group of order $n$ generated by $\varphi_{1}$. 
\end{rem}

Let $X$ be a linear space over the (not necessarily finite) field $\mathbb{K}$
and $a\colon X\to \mathbb{K}$ be an additive function. 
Then clearly, for any $k\in \mathbb{Z}$ we have 
\[
 a(kx)=ka(x)
 \qquad 
 \left(x\in X\right). 
\]
Nevertheless, it may happen that $a$ satisfies the same identity for all $x\in X$ and for some $\alpha\in \mathbb{K}\setminus \mathbb{Z}$, 
therefore we introduce the following.

\begin{dfn}\label{def_homfield}
Let $X$ be a linear space over the (not necessarily finite) field $\mathbb{K}$
and $a\colon X\to \mathbb{K}$ be an additive function and 
\[
 \mathbb{H}_{a}= \left\{\alpha\in \mathbb{K}\, \vert \, a(\alpha x)= \alpha a(x) \text{ for all } x\in X\right\}. 
\]
This set is called the \emph{homogeneity field} of the additive function $a$. Observe that this term is well-motivated, since we have the following. 
\end{dfn}

Although the following two statements are known in case $\mathbb{K}=\mathbb{R}$ (see Kuczma \cite{Kuc09}), for the sake of completeness  we present a short argument for them. 

\begin{prop}
 Let $X$ be a linear space over the  field $\mathbb{K}$
and $a\colon X\to \mathbb{K}$ be an additive function. 
Then $\mathbb{H}_{a}\subset \mathbb{K}$ is a field. 
\end{prop}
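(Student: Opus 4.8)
The plan is to show that $\mathbb{H}_{a}$ is a subfield of $\mathbb{K}$ by checking the subfield axioms one at a time; each will follow directly from the additivity of $a$ together with the vector space axioms governing the scalar action on $X$. First I would record that $0, 1 \in \mathbb{H}_{a}$: since $a$ is additive we have $a(0)=0$, so $a(0\cdot x)=a(0)=0=0\cdot a(x)$, while trivially $a(1\cdot x)=a(x)=1\cdot a(x)$. In particular $\mathbb{H}_{a}$ is nonempty and contains an element distinct from $0$, so it will be a genuine (nontrivial) field once closure is verified.

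Next I would verify closure under the additive structure. For $\alpha, \beta \in \mathbb{H}_{a}$ the distributive law $(\alpha+\beta)x=\alpha x+\beta x$ together with additivity of $a$ gives
\[
a((\alpha+\beta)x)=a(\alpha x)+a(\beta x)=\alpha a(x)+\beta a(x)=(\alpha+\beta)a(x),
\]
so $\alpha+\beta \in \mathbb{H}_{a}$; and since additivity forces $a(-v)=-a(v)$, one obtains $a((-\alpha)x)=-a(\alpha x)=-\alpha a(x)=(-\alpha)a(x)$, whence $-\alpha \in \mathbb{H}_{a}$. For the multiplicative part, the associativity of the scalar action $(\alpha\beta)x=\alpha(\beta x)$ lets me apply the defining property of $\alpha$ to the vector $\beta x$:
\[
a((\alpha\beta)x)=a(\alpha(\beta x))=\alpha a(\beta x)=\alpha\beta\, a(x),
\]
so $\alpha\beta \in \mathbb{H}_{a}$.

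The only step that is not an entirely mechanical substitution is closure under multiplicative inverses, and I expect this to be the sole genuine point of the argument. Given $\alpha \in \mathbb{H}_{a}$ with $\alpha \neq 0$, I would substitute $y=\alpha^{-1}x$ into the identity $a(\alpha y)=\alpha a(y)$, obtaining $a(x)=\alpha\, a(\alpha^{-1}x)$; dividing by the nonzero scalar $\alpha$ then yields $a(\alpha^{-1}x)=\alpha^{-1}a(x)$ for every $x\in X$, i.e. $\alpha^{-1}\in \mathbb{H}_{a}$. Having established that $\mathbb{H}_{a}$ contains $0$ and $1$ and is closed under addition, negation, multiplication, and inversion of nonzero elements, I conclude that it is a subfield of $\mathbb{K}$, and hence a field in its own right.
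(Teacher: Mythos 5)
Your proof is correct and follows essentially the same route as the paper: both arguments verify the subfield closure properties of $\mathbb{H}_{a}$ by combining the additivity of $a$ with the vector space axioms, the only difference being that the paper packages the check into the subtraction-and-quotient criterion ($\alpha-\beta\in\mathbb{H}_{a}$ and $\alpha/\beta\in\mathbb{H}_{a}$ for $\beta\neq 0$) while you verify each axiom separately. Your explicit observation that $0,1\in\mathbb{H}_{a}$ is a harmless addition that the paper leaves implicit.
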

\begin{proof}
 Let $\alpha, \beta\in \mathbb{H}_{a}$, then 
 \[
  a((\alpha-\beta)x)= a(\alpha x)-a(\beta x)= \alpha a(x)-\beta a(x)= (\alpha-\beta)a(x) 
  \qquad 
  \left(x\in X\right), 
 \]
 yielding that $\alpha-\beta \in \mathbb{H}_{a}$. 
 Similarly, if $\beta \neq 0$, then 
 \[
  \alpha a(x)= a(\alpha x)= a\left(\beta \dfrac{\alpha}{\beta}x\right)= \beta a\left(\dfrac{\alpha}{\beta}x\right) 
  \qquad 
  \left(x\in X\right), 
 \]
 from which $\dfrac{\alpha}{\beta}\in \mathbb{H}_{a}$ follows. 
\end{proof}

In some sense, the converse is also true, namely we have the proposition below. The proof is 
based on the existence of Hamel bases of linear spaces. Therefore, in any case it is needed, 
the Axiom of Choice is supposed to hold. 

\begin{prop}
 Let $X$ be a linear space over the field $\mathbb{K}$, let further  $\mathbb{L}\subset \mathbb{K}$ be a subfield of $\mathbb{K}$. 
 Then there exists an additive function $a\colon X\to \mathbb{K}$ such that $\mathbb{H}_{a}= \mathbb{L}$. 
\end{prop}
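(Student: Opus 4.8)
The plan is to construct $a$ as an $\mathbb{L}$-linear map, so that $\mathbb{L}$-homogeneity comes for free, and then to impose one normalization that prevents any scalar outside $\mathbb{L}$ from entering $\mathbb{H}_{a}$. Regarding $X$ as a vector space over the subfield $\mathbb{L}$ (restriction of scalars), every $\mathbb{L}$-linear map $a\colon X\to\mathbb{K}$ is additive and satisfies $a(\lambda x)=\lambda a(x)$ for all $\lambda\in\mathbb{L}$, $x\in X$; hence $\mathbb{L}\subseteq\mathbb{H}_{a}$ holds automatically. The whole difficulty is therefore to secure the reverse inclusion $\mathbb{H}_{a}\subseteq\mathbb{L}$.

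Assuming $X\neq\{0\}$ (otherwise $\mathbb{H}_{a}=\mathbb{K}$ for every $a$, and the statement is meaningful only when $\mathbb{L}=\mathbb{K}$), I would fix $e\in X\setminus\{0\}$. Since $X$ is a $\mathbb{K}$-vector space, the map $\kappa\mapsto\kappa e$ is injective, so the $\mathbb{L}$-subspace $W=\mathbb{K}e$ is $\mathbb{L}$-isomorphic to $\mathbb{K}$. Next I choose a Hamel basis of $\mathbb{K}$ over $\mathbb{L}$ containing the element $1$, and let $\pi\colon\mathbb{K}\to\mathbb{L}$ be the associated coordinate functional returning the coefficient of $1$. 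Then $\pi$ is $\mathbb{L}$-linear, $\pi(1)=1$, and $\pi(\kappa)\in\mathbb{L}$ for every $\kappa\in\mathbb{K}$.

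Now I define $a$ on $W$ by $a(\kappa e)=\pi(\kappa)$ and extend it $\mathbb{L}$-linearly to all of $X$: concretely, I refine a basis of $W$ to a Hamel basis of $X$ over $\mathbb{L}$ (this is where the Axiom of Choice enters) and set $a$ equal to zero on the complementary basis vectors. The resulting $a$ is $\mathbb{L}$-linear, so $\mathbb{L}\subseteq\mathbb{H}_{a}$. For the converse, suppose $\alpha\in\mathbb{H}_{a}$ and evaluate the defining identity at $x=e$: this yields $\pi(\alpha)=a(\alpha e)=\alpha\,a(e)=\alpha\,\pi(1)=\alpha$, so $\alpha=\pi(\alpha)\in\mathbb{L}$. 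Combining the two inclusions gives $\mathbb{H}_{a}=\mathbb{L}$.

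The step I expect to be the crux is this last one. The normalization $a(e)=\pi(1)=1$, combined with the fact that $a$ maps the entire line $W=\mathbb{K}e$ into $\mathbb{L}$, forces any homogeneity scalar $\alpha$ to coincide with its own $\mathbb{L}$-coordinate $\pi(\alpha)$, and therefore to lie in $\mathbb{L}$. Everything else is routine: well-definedness of $a$ rests on the existence of Hamel bases, and the inclusion $\mathbb{L}\subseteq\mathbb{H}_{a}$ is immediate from $\mathbb{L}$-linearity.
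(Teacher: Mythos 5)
Your proof is correct and follows essentially the same strategy as the paper's: build an $\mathbb{L}$-linear map via Hamel bases whose range lies in a one-dimensional $\mathbb{L}$-subspace of $\mathbb{K}$, so that $\mathbb{L}\subseteq\mathbb{H}_{a}$ is automatic and evaluating the homogeneity identity at a single point where $a$ is nonzero forces $\alpha\in\mathbb{L}$. The only (immaterial) difference is the choice of map: the paper extends the constant $c$ from an $\mathbb{L}$-basis of $X$, getting $a(X)\subseteq c\cdot\mathbb{L}$, whereas you use the $\mathbb{L}$-coordinate functional $\pi$ of $1$ transported to the line $\mathbb{K}e$ and extended by zero.
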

\begin{proof}
 Let $B$ be the Hamel basis of the linear space $(X, \mathbb{L}, +, \cdot)$, which (according to Corollary 4.2.1. of Kuczma \cite{Kuc09}) does exist. 
 Fix $c\in \mathbb{K}\setminus\left\{0\right\}$ and define the function $f\colon B\to \mathbb{K}$ by 
 \[
  f(x)=c 
  \qquad 
  \left(x\in B\right). 
 \]
Making use of Theorem 4.3.1 of Kuczma \cite{Kuc09}, there exists a homomorphism $a$ from $(X, \mathbb{L}, +, \cdot)$ to 
$(\mathbb{K}, \mathbb{L}, +, \cdot)$ such that we additionally have that $a\vert_{B}=f$. 
Clearly, $a$ is an additive function and 
\[
 a(\alpha x)= \alpha a(x)
 \qquad 
 \left(x\in X, \alpha \in \mathbb{L}\right). 
\]
Thus $\mathbb{L}\subset \mathbb{H}_{a}$. 

For the converse statement, let $x\in X$ be arbitrary, then 
$x=\sum_{i=1}^{n}\lambda_{i} b_{i}$, where $\lambda_{i}\in \mathbb{L}$ and $b_{i}\in B$ for all $i=1, \ldots, n$. 
Furthermore, 
\[
 a(x)= a\left(\sum_{i=1}^{n}\lambda_{i} b_{i}\right)= \sum_{i=1}^{n}\lambda_{i} a(b_{i})
 =\sum_{i=1}^{n}\lambda_{i}f(b_{i}) 
 =c\cdot \sum_{i=1}^{n}\lambda_{i}\in c\cdot \mathbb{L}, 
\]
or equivalently, $a(X)\subset c\cdot \mathbb{L}$.

Let now $\alpha \in \mathbb{H}_{a}$ and $b_{0}\in B$
be arbitrary, then 
\[
 a(\alpha b_{0})= \alpha a(b_{0})= \alpha f(b_{0})= \alpha c. 
\]
On the other hand, since $\alpha b_{0}\in X$, inclusion $a(X)\subset c\cdot \mathbb{L}$ implies that 
there exists $\lambda \in \mathbb{L}$ such that $a(\alpha b_{0})=\lambda c$. 
Since $c$ was to be chosen nonzero, this means that $\alpha= \lambda\in \mathbb{L}$. 
Therefore $\mathbb{H}_{a}\subset \mathbb{L}$. 
\end{proof}

\begin{thm}
Let $p$ be a prime and $n\in \mathbb{N}$. Then for all $d\vert n$, the field $\mathrm{GF}(p^{n})$ 
admits \emph{exactly} one subfield isomorphic to $\mathrm{GL}(p^{d})$ and 
$\mathrm{GL}(p^{n})$ has no other type of sub-fields. Furthermore, this subfield is the set of zeros of 
the polynomial $x^{p^{d}}-x$ in $\mathrm{GF}(p^{n})$. 
\end{thm}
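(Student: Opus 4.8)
The plan is to set up the standard correspondence between divisors $d \mid n$ and subfields of $\mathrm{GF}(p^n)$ by counting the roots of the separable polynomial $x^{p^d}-x$, tying the resulting subfield to the fixed field of the Frobenius power appearing in the preceding theorem.

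First I would record that the order of any subfield must have the asserted shape. Every subfield $\mathbb{L}$ of $\mathrm{GF}(p^n)$ contains $1$ and is closed under addition, hence contains the prime field $\mathrm{GF}(p)$, so $\mathbb{L}$ has $p^m$ elements for some $m \in \mathbb{N}$. Regarding $\mathrm{GF}(p^n)$ as a finite-dimensional vector space over $\mathbb{L}$, say of dimension $s$, gives $p^n = (p^m)^s$, whence $n = ms$ and therefore $m \mid n$. This already shows that $\mathrm{GF}(p^n)$ can have no subfield whose order is not of the form $p^d$ with $d \mid n$.

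Next, for a fixed divisor $d \mid n$ I would exhibit the subfield of order $p^d$ and identify it with the zero set of $x^{p^d}-x$. The technical crux is the divisibility chain: $d \mid n$ implies $(p^d - 1) \mid (p^n - 1)$ (writing $n = dk$ and factoring $(p^d)^k - 1$), which gives $x^{p^d - 1} - 1 \mid x^{p^n - 1} - 1$ (since $y - 1 \mid y^k - 1$ for $y = x^{p^d-1}$), and multiplying by $x$ yields $x^{p^d} - x \mid x^{p^n} - x$. Because $\mathrm{GF}(p^n)$ is the splitting field of $x^{p^n}-x$ and consists of its $p^n$ distinct roots, the divisor $x^{p^d}-x$ also splits in $\mathrm{GF}(p^n)$, and since its formal derivative is $-1$ it is separable, so it has exactly $p^d$ distinct roots there. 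Writing $\mathbb{F} = \{a \in \mathrm{GF}(p^n) : a^{p^d} = a\}$ for this root set, the additivity of the Frobenius map $a \mapsto a^{p^d}$ in characteristic $p$, namely $(a+b)^{p^d} = a^{p^d}+b^{p^d}$, together with its multiplicativity, shows that $\mathbb{F}$ is closed under the field operations and under inverses; hence $\mathbb{F}$ is a subfield of $\mathrm{GF}(p^n)$ with $p^d$ elements, so $\mathbb{F} \cong \mathrm{GF}(p^d)$.

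Finally I would prove uniqueness. If $\mathbb{L} \subseteq \mathrm{GF}(p^n)$ is any subfield with $p^d$ elements, its multiplicative group has order $p^d-1$, so $a^{p^d-1}=1$ for every nonzero $a \in \mathbb{L}$, i.e. $a^{p^d}=a$ for all $a \in \mathbb{L}$ (including $a=0$); thus $\mathbb{L} \subseteq \mathbb{F}$, and since both sets have exactly $p^d$ elements, $\mathbb{L} = \mathbb{F}$. This simultaneously yields that $\mathbb{F}$ is the \emph{unique} subfield of order $p^d$ and that it coincides with the zero set of $x^{p^d}-x$. The only mildly technical ingredient is the elementary divisibility chain; the conceptual content is simply that the subfield of order $p^d$ is the fixed field of the $d$-th power of the Frobenius automorphism featured in the preceding theorem.
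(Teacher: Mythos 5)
Your proof is correct and complete: the three steps (every subfield has order $p^{m}$ with $m\mid n$ via the tower argument; existence of the subfield of order $p^{d}$ as the root set of the separable polynomial $x^{p^{d}}-x$, using the divisibility chain $(p^{d}-1)\mid(p^{n}-1)$ and the additivity of $a\mapsto a^{p^{d}}$; uniqueness via Lagrange in the multiplicative group) constitute the standard argument, e.g. Theorem 2.6 of Lidl--Niederreiter. The paper itself states this theorem without proof, citing that monograph, so there is no in-paper argument to compare against; your write-up supplies exactly the expected proof.
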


Finally, we provide necessary and sufficient conditions for the existence 
of non-zero, bi-additive semi-homogeneous mappings. 
 
The relations among the elements $\alpha, \beta$ and $\gamma$ such that the semi-homogeneity equation \eqref{semihomokos} 
is satisfied for some non-zero bi-additive mapping $A\colon X\times Y\to \mathbb{K}$  are more implicit as we will see in what follows.

\begin{lem}
\label{lemmakey} 
Let $X$ and $Y$ be linear spaces over the field $\mathbb{K}$ and let $\alpha,\beta, \gamma\in \mathbb{K}$ be given  non-zero elements. 
There exists a not identically zero bi-additive mapping $A\colon X\times Y\to \mathbb{K}$ satisfying the semi-homogeneity equation \eqref{semihomokos} if and only if there exists a not identically zero bi-additive mapping $B\colon \mathbb{K}\times \mathbb{K}\to \mathbb{K}$  satisfying equation 
\begin{equation}\label{Eq21}
B(\alpha u, \beta v)=\gamma B(u,v) \quad (\gamma\neq 0)
\end{equation}
\end{lem}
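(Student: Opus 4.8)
The plan is to prove the equivalence by two direct constructions, transporting a solution back and forth between $X\times Y$ and $\mathbb{K}\times\mathbb{K}$; I will tacitly assume $X,Y\neq\{0\}$, since otherwise no non-zero bi-additive map on $X\times Y$ exists and the statement is to be read over non-trivial spaces. For the implication ``$B$ exists $\Rightarrow$ $A$ exists'', I would first fix a non-zero $\mathbb{K}$-linear functional $\pi\colon X\to\mathbb{K}$ and likewise $\rho\colon Y\to\mathbb{K}$ (these exist by choosing a Hamel basis, the Axiom of Choice being assumed as in the earlier propositions), and then set
\[
A(x,y)=B(\pi(x),\rho(y)) \qquad (x\in X,\,y\in Y).
\]
Bi-additivity of $A$ is immediate from the additivity of $\pi,\rho$ and the bi-additivity of $B$; the semi-homogeneity \eqref{semihomokos} follows from $\pi(\alpha x)=\alpha\pi(x)$, $\rho(\beta y)=\beta\rho(y)$ and \eqref{Eq21}; and since a non-zero linear functional is surjective, any pair witnessing $B\not\equiv0$ can be pulled back to witness $A\not\equiv0$.

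For the converse ``$A$ exists $\Rightarrow$ $B$ exists'', my plan is to mimic the device already used in the proof of Theorem \ref{Thm3}: fix $x_0\in X$ and $y_0\in Y$ with $A(x_0,y_0)\neq0$ and put
\[
B(u,v)=A(ux_0,vy_0) \qquad (u,v\in\mathbb{K}).
\]
Here bi-additivity follows from the vector-space identity $(u+u')x_0=ux_0+u'x_0$ together with the additivity of $A$, the semi-homogeneity from $(\alpha u)x_0=\alpha(ux_0)$ and \eqref{semihomokos}, and non-triviality from $B(1,1)=A(x_0,y_0)\neq0$.

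I do not expect a serious obstacle here: the real point is conceptual, namely that equation \eqref{semihomokos} only records how the bi-additive map reacts to multiplication by $\alpha$ and $\beta$ and is otherwise blind to the ambient spaces, so the existence question genuinely reduces to the single case $X=Y=\mathbb{K}$. The only spots needing care are the invocation of a Hamel basis to produce the functionals $\pi,\rho$ (which forces the standing assumption $X,Y\neq\{0\}$) and the careful use of the scalar-action axioms $(\alpha u)x_0=\alpha(ux_0)$ when verifying \eqref{Eq21}; both are routine. The substantive difficulty---deciding for which $\alpha,\beta,\gamma$ a non-zero $B$ on $\mathbb{K}\times\mathbb{K}$ actually exists---is precisely what this lemma postpones to the field-theoretic results that follow.
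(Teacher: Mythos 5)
Your proposal is correct and follows essentially the same route as the paper: the forward reduction via $B(u,v)=A(ux_0,vy_0)$ is identical, and your choice of non-zero linear functionals $\pi,\rho$ is exactly the paper's ``projection onto the first coordinate with respect to a Hamel basis,'' with your surjectivity remark making the non-triviality of $A$ slightly more explicit than the paper does.
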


\begin{proof} Suppose that $A\colon X\times Y\to \mathbb{K}$ satisfies the semi-homogeneity equation \eqref{semihomokos} and $A(x,y)\neq 0$ 
for a certain element $(x, y)\in X\times Y$. 
The bi-additive mapping $B\colon \mathbb{K}\times \mathbb{K}\to \mathbb{K}$ defined by 
\[
B(u,v)=A(ux, vy) 
\qquad 
\left(u, v\in \mathbb{K}\right)
\]
obviously satisfies equation \eqref{Eq21}. 
Conversely, suppose that $B\colon \mathbb{K}\times \mathbb{K}\to \mathbb{K}$ satisfies equation \eqref{Eq21}.  
Let  $\left\{x_{\mu}\right\}_{\mu \in \Gamma_{X}}$ and $\left\{y_{\nu}\right\}_{\nu\in \Gamma_{Y}}$ be Hamel bases in $X$ and $Y$, respectively. 
Taking the projections
\[
\pi_X^1 \colon X\times Y\to \mathbb{K} \quad \text{and} \quad \pi_Y^1 \colon X\times Y\to \mathbb{K}
\]
onto the first coordinate of the elements with respect to the given bases it follows that the mapping
$A\colon X\times Y\to \mathbb{K}$ defined by 
\[
 A(x,y)=B(\pi_X^1(x), \pi_Y^1(y))
 \qquad 
 \left(x\in X, y\in Y\right)
 \]
fulfills \eqref{semihomokos}.
\end{proof}

\begin{rem} 
Note that there is no need any additional condition for the cardinality of the field $\mathbb{K}$ to prove Lemma \ref{lemmakey}.
\end{rem}

From now on the results are strongly based on the cardinality condition for $\mathbb{K}$ being finite. 
Let $\mathbb{K}=\mathrm{GF}(q)$, where $q=p^n$ for some prime number $p\in \mathbb{N}$ and consider a (finite) basis
$b_0, \ldots, b_{n-1}$ of $\mathbb{K}$ over its prime field $\mathbb{Z}_p$. 
It is clear that
\begin{itemize}
\item [(H)] bi-additivity implies $\mathbb{Z}_p$-homogeneity for any bi-additive mapping $B\colon \mathbb{K}\times \mathbb{K}\to \mathbb{K}$. 
\end{itemize}
Since the translation $\tau_i\colon \mathbb{K}\to \mathbb{K}$ with respect to the multiplication 
by the $i$\textsuperscript{th} element of the given basis ($i=0, \ldots, n-1$), that is, 
\[
\tau_i(x)=b_i\cdot x
\qquad 
\left(x\in \mathbb{K}\right)
\]
is a linear transformation,  we can consider its matrix representation $M^i$ given by
\[
\tau_i(b_k)=\sum_{j=0}^{n-1} m^{(i)}_{jk}b_j,
\]
where for any possible indices we have $m^{(i)}_{jk}\in \mathbb{Z}_p$. 
According to property (H), a simple calculation shows that equation \eqref{Eq21} is equivalent to 
\[
\gamma B(b_k,b_l)=\sum_{i, j=0}^{n-1}\alpha_i\beta_j \sum_{r, s=0}^{n-1}m^{(i)}_{rk}m^{(j)}_{sl} B(b_r, b_s),
\]
where $k, l=0, \ldots n-1$, $\displaystyle{\alpha=\sum_{i=0}^{n-1} \alpha_i b_i}$ and 
$\displaystyle{\beta=\sum_{j=0}^{n-1} \beta_j b_j}$ with $\alpha_i, \beta_j\in \mathbb{Z}_p$. 

Let $\mathscr{M}_n(\mathbb{K})$ be the linear space of matrices of order $n$ over the field $\mathbb{K}$ and consider the linear mapping
\begin{equation}
\label{characteristicpol}
P_{\alpha, \beta}\colon \mathscr{M}_n(\mathbb{K})\ni X \mapsto Y=P_{\alpha, \beta}(X),
\end{equation}
where
$$y_{kl}=\sum_{i, j=0}^{n-1}\alpha_i\beta_j \sum_{r, s=0}^{n-1}m^{(i)}_{rk}m^{(j)}_{sl}x_{rs}.$$
In a more compact form 
\[
P_{\alpha, \beta}(X)=\sum_{i, j=0}^{n-1}\alpha_i \beta_j \left(M^{(i)}\right)^T X M^{(j)}.
\]
Equation \eqref{Eq21} is obviously satisfied if and only if $0\neq \gamma\in \mathbb{K}$ is an eigenvalue  of $P_{\alpha, \beta}$. 
The corresponding (non-zero) eigenvector $B\in \mathscr{M}_n(\mathbb{K})$ 
can be chosen as the matrix of a bi-linear mapping satisfying \eqref{Eq21}. 
To sum up, we can formulate the following result as the answer for the problem of 
existence of non-identically zero bi-additive mapping satisfying \eqref{semihomokos}. 

\begin{thm}
Let $\mathbb{K}=\mathrm{GF}(q)$, where $q=p^n$ for some prime number $p$ and $n\in \mathbb{N}$. 
Consider the polynomial 
\[
P(u,v,w)=\det \left(P_{u, v}-w\cdot \mathrm{id}\right)
\qquad 
\left(u, v, w\in \mathbb{K}\right), 
\]
where $\mathrm{id}$ stands for the identity mapping of the linear space of matrices of order $n$ over the field $\mathbb{K}$. Then the following assertions are equivalent
\begin{enumerate}[(i)]
\item there is a not identically zero bi-additive mapping satisfying the semi-homogeneity equation \eqref{Eq21},
\item the characteristic polynomial of the linear operator $P_{\alpha, \beta}$ is reducible over the field $\mathbb{K}$ by one of its non-zero roots,
\item $P(\alpha, \beta, \gamma)=0.$
\end{enumerate}
\end{thm}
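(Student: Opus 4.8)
The plan is to assemble the three equivalences from the matrix reformulation already carried out just before the statement; the governing observation is that equation \eqref{Eq21} is literally an eigenvalue equation for the operator $P_{\alpha, \beta}$ of \eqref{characteristicpol}. First I would invoke Lemma \ref{lemmakey} to replace the existence of a not identically zero bi-additive $A\colon X\times Y\to \mathbb{K}$ satisfying \eqref{semihomokos} by the existence of a not identically zero bi-additive $B\colon \mathbb{K}\times \mathbb{K}\to \mathbb{K}$ satisfying \eqref{Eq21}; since that lemma requires no cardinality hypothesis, this reduction is free and preserves non-triviality in both directions. Then, by property (H), every such $B$ is $\mathbb{Z}_p$-bilinear, hence is both uniquely determined by and freely prescribed through its matrix of values $\left(B(b_k, b_l)\right)_{k, l=0}^{n-1}\in \mathscr{M}_n(\mathbb{K})$ in the fixed basis $b_0, \ldots, b_{n-1}$.

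Under this identification $B\leftrightarrow \left(B(b_k, b_l)\right)$, the displayed computation preceding the theorem shows that \eqref{Eq21} is equivalent to the matrix identity $P_{\alpha, \beta}(B)=\gamma B$. In other words, the bi-additive solutions of \eqref{Eq21} correspond exactly to the eigenvectors of the linear operator $P_{\alpha, \beta}$ belonging to the eigenvalue $\gamma$. From here the implication (i) $\Longleftrightarrow$ (iii) is immediate: a not identically zero $B$ satisfying \eqref{Eq21} exists precisely when $P_{\alpha, \beta}$ possesses a non-zero eigenvector for $\gamma$, which (the scalar $\gamma\neq 0$ being prescribed) happens exactly when $\gamma$ is an eigenvalue of $P_{\alpha, \beta}$, i.e.\ when $\det\left(P_{\alpha, \beta}-\gamma\cdot \mathrm{id}\right)=0$. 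By the definition of the polynomial $P$ this last condition is precisely $P(\alpha, \beta, \gamma)=0$, which is (iii).

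For (ii) $\Longleftrightarrow$ (iii) I would simply observe that $P(\alpha, \beta, \gamma)$ is the characteristic polynomial of $P_{\alpha, \beta}$ in the variable $w$ evaluated at $w=\gamma$. Hence $P(\alpha, \beta, \gamma)=0$ says exactly that the fixed non-zero scalar $\gamma$ is a root of that characteristic polynomial over $\mathbb{K}$, equivalently that the linear factor $(w-\gamma)$ splits off over $\mathbb{K}$, which is the content of (ii). Chaining these two equivalences closes the cycle (i) $\Rightarrow$ (iii) $\Rightarrow$ (ii) $\Rightarrow$ (i).

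The routine bookkeeping is checking that the matrix-to-map correspondence carries non-zero matrices to not identically zero maps and back, so that non-triviality is genuinely preserved, together with re-deriving the equivalence of \eqref{Eq21} with $P_{\alpha, \beta}(B)=\gamma B$. Since the substantive work has already been done in setting up $P_{\alpha, \beta}$, there is no deep obstacle here; the one point I would state with care is that over the finite field $\mathbb{K}=\mathrm{GF}(q)$ the existence of an eigenvector is controlled entirely by the vanishing of $\det\left(P_{\alpha, \beta}-\gamma\cdot \mathrm{id}\right)$, with no need to pass to an algebraic closure — because $\gamma$ is prescribed to lie in $\mathbb{K}$ itself, the eigenvalue, its eigenvector, and the corresponding root of the characteristic polynomial all live in the same field $\mathbb{K}$.
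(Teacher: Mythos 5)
Your proposal is correct and follows essentially the same route as the paper, which likewise reduces via Lemma \ref{lemmakey}, identifies a bi-additive $B$ on $\mathbb{K}\times\mathbb{K}$ with its matrix of values on the $\mathbb{Z}_p$-basis using property (H), and reads \eqref{Eq21} as the eigenvalue equation $P_{\alpha,\beta}(B)=\gamma B$, so that the three assertions all amount to the vanishing of $\det\left(P_{\alpha,\beta}-\gamma\cdot\mathrm{id}\right)$. The paper does not even write a separate proof but presents the theorem as a summary of exactly this preceding computation, so your write-up is if anything slightly more explicit about the non-triviality bookkeeping.
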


\begin{rem} If the elements $\alpha$ and $\beta$ are given, 
then the possible $\gamma$'s are among the roots of the characteristic polynomial $P_{\alpha, \beta}$. 
The characteristic polynomial is independent of the choice of the basis $b_0, \ldots, b_{n-1}$. 
Moreover it is a polynomial over the prime field but the root $\gamma$ belongs to $\mathbb{K}$ in general. 
Setting the variables $\alpha$ and $\beta$ free, the roots of the multivariate polynomial $P$ is independent of the choice of the basis $b_0, \ldots, b_{n-1}$. 
In other words the algebraic variety 
\[
P(x,y,z)=0
\]
in $\mathbb{K}^3$ contains all possible triplets for the solution of the semi-homogeneity equation \eqref{semihomokos}. 
\end{rem}

\subsection{An example: the field $\mathrm{GF}(4)$}

The operations are summarized in the following tables:

\begin{center}
\begin{tabular}{c|cccc}
$+$ & 0&1&$a$&$1+a$\\
\hline
0& 0&1&$a$&$1+a$\\
1& 1&0&$1+a$&$a$\\
$a$& $a$&$1+a$&0&1\\
$1+a$& $1+a$&$a$&1&0\\
\end{tabular}
\quad 
\text{and} 
\quad 
\begin{tabular}{c|cccc}
$\cdot$ & $0$&$1$&$a$&$1+a$\\
\hline
$0$& $0$&$0$&$0$&$0$\\
$1$& $0$&$1$&$a$&$1+a$\\
$a$& $0$&$a$&$1+a$&$1$\\
$1+a$& $0$&$1+a$&$1$&$a$\\
\end{tabular}
\end{center}

Since $4=2^2$ it follows that $\mathrm{GF}(4)$ is a two-dimensional linear space over its prime field $\mathbb{Z}_2$. 
The basis we are going to use in the following is $b_0=1$, $b_1=a$. 
An easy computation shows that the translations $\tau_0$ and $\tau_1$ are represented by the matrices
\[
M^0=
\begin{pmatrix}
1 & 0\\
0 & 1 
\end{pmatrix} 
\quad
\text{and} 
\quad 
M^1=
\begin{pmatrix}
0 & 1\\
1& 1
\end{pmatrix},
\]
respectively. Choosing elements 
\[
\alpha= \alpha_0+\alpha_1\cdot a, \quad \beta=\beta_0+\beta_1\cdot a,
\]
where $\alpha_0, \alpha_1, \beta_0, \beta_1\in \mathbb{Z}_2$, 
it follows that
\[P_{\alpha, \beta}(X)=\alpha_0\beta_0 X+\alpha_0\beta_1 X 
\begin{pmatrix}
0 & 1\\
1& 1
\end{pmatrix}
+\alpha_1 \beta_0 
\begin{pmatrix}
0 & 1\\
1& 1
\end{pmatrix} X+\alpha_1\beta_1 
\begin{pmatrix}
0 & 1\\
1& 1
\end{pmatrix}
X
\begin{pmatrix}
0 & 1\\
1& 1
\end{pmatrix},
\]
where $\displaystyle{X=
\begin{pmatrix}
x_{00} & x_{01}\\
x_{10}& x_{11}
\end{pmatrix}}.$ 
Taking 
\[P_{\alpha, \beta}(X)=
\begin{pmatrix}
y_{00} & y_{01}\\
y_{10}& y_{11}
\end{pmatrix},
\]
a  direct computation shows that
$$y_{00}=\alpha_0 \beta_0 x_{00}+\alpha_0 \beta_1 x_{01}+\alpha_1 \beta_0 x_{10}+\alpha_1\beta_1 x_{11},$$
$$y_{01}=\alpha_0\beta_0 x_{01}+\alpha_0 \beta_1 (x_{00}+x_{01})+ \alpha_1\beta_0 x_{11}+\alpha_1\beta_1(x_{10}+x_{11}),$$
$$y_{10}=\alpha_0\beta_0 x_{10}+\alpha_0\beta_1x_{11}+\alpha_1\beta_0(x_{00}+x_{10})+\alpha_1 \beta_1(x_{01}+x_{11}),$$
$$y_{11}=\alpha_0\beta_0x_{11}+\alpha_0\beta_1 (x_{10}+x_{11})+\alpha_1\beta_0(x_{01}+x_{11})+\alpha_1\beta_1(x_{00}+x_{01}+x_{10}+x_{11}).$$
Therefore $P_{\alpha, \beta}$ is represented by the matrix
\[\begin{pmatrix}
\alpha_0 \beta_0 & \alpha_0 \beta_1&\alpha_1 \beta_0&\alpha_1 \beta_1\\
\alpha_0 \beta_1& \alpha_0 \beta_0+\alpha_0 \beta_1&\alpha_1 \beta_1&\alpha_1 \beta_0+\alpha_1 \beta_1\\
\alpha_1 \beta_0&\alpha_1 \beta_1&\alpha_0 \beta_0+\alpha_1 \beta_0&\alpha_0 \beta_1+\alpha_1\beta_1\\
\alpha_1\beta_1& \alpha_1 \beta_0+\alpha_1\beta_1& \alpha_0 \beta_1+\alpha_1\beta_1&\alpha_0 \beta_0+\alpha_0 \beta_1+\alpha_1 \beta_0+\alpha_1\beta_1
\end{pmatrix}
\]
\noindent
with respect to the basis
\[
B_{00}=\begin{pmatrix}
1 & 0 \\
0 & 0 
\end{pmatrix}, \quad 
B_{01}=\begin{pmatrix}
0 & 1\\
0 & 0 
\end{pmatrix}, 
\quad 
B_{10}=\begin{pmatrix}
0 & 0 \\
1 & 0
\end{pmatrix}, 
\quad 
B_{11}=\begin{pmatrix}
0 & 0 \\
0 & 1
\end{pmatrix}.
\]
If $\alpha=1+a$, i.e. $\alpha_0=\alpha_1=1$ and $\beta=a$, i.e. $\beta_0=0$, $\beta_1=1$ then we have the matrix
\[
\begin{pmatrix}
0&1&0&1\\
1&1&1&1\\
0&1&0&0\\
1&1&0&0
\end{pmatrix}
\]
and the characteristic polynomial is
\[
P_{1+a, a}(t)=\left(t-1\right)^{2}\cdot \left(1+t+t^2\right).
\]
This means that the possible choices are $\gamma=1, a$ or $1+a$, that is, 
if $X$ and $Y$ are linear spaces over the field $\mathbb{K}=\mathrm{GF}(4)$, 
then there exist not identically zero bi-additive mappings of the form $A\colon X\times Y\to \mathbb{K}$ such that
\[A((1+a)x, ay)=A(x,y) \qquad (x\in X, y\in Y),\]
\[A((1+a)x, ay)=aA(x,y) \qquad (x\in X, y\in Y),\]
or
\[A((1+a)x, ay)=(1+a)A(x,y) \qquad (x\in X, y\in Y).\]

\begin{rem}
 $\mathrm{GF}(4)$ seems to be rich of semi-homogeneous biadditive functions in case of $\alpha=1+a$ and $\beta=a$. 
 In general, if $\alpha$ and $\beta$ are fixed, then the characteristic polynomial is of degree $n^2$, i.e. 
 we have at most $n^2$ different possible values for $\lambda$. 
 It is a polynomial dependence on $n$ but the number of the elements in $\mathrm{GF}(p^n)$ increases exponentially. 
 Therefore the probability of a randomly chosen element in $\mathbb{K}$ to be a possible value for $\lambda$ tends to zero.  
\end{rem}

In this last section of the paper we investigated 
the existence of non-zero, bi-additive semi-ho\-mo\-ge\-ne\-ous mappings. 
If the field $\mathbb{K}$ is of characteristic zero or $\mathbb{K}$ is a finite field, 
we could provide necessary and sufficient conditions. 
At the same time, there exist infinite fields of prime characteristic (for example, the field of all rational functions over $\mathbb{Z}/p\mathbb{Z}$).
Therefore, we end this paper with two open problems.

\begin{opp}
 Let $p$ be a prime and $\mathbb{K}$ be an infinite field of characteristic $p$. 
 Further, let $X$ be linear spaces over $\mathbb{K}$ and 
 $a\colon X\to \mathbb{K}$ be an additive function. 
 Find necessary and sufficient conditions for $a$ to be a nontrivial, semi-homogeneous additive mapping.  
\end{opp}

\begin{opp}
 Let $p$ be a prime and $\mathbb{K}$ be an infinite field of characteristic $p$. 
 Further, let $X$ and $Y$ be linear spaces over $\mathbb{K}$ and 
 $A\colon X\times Y\to \mathbb{K}$ be a bi-additive function. 
 Find necessary and sufficient conditions for $A$ to be a nontrivial, semi-homogeneous bi-additive mapping.  
\end{opp}

\begin{ackn}
E.~Gselmann was supported by the Hungarian Scientific Research Fund
(OTKA) Grant K 111651. and also by the EFOP-3.6.1-16-2016-00022 project. 
The project is co-financed by the European Union and the European Social Fund.

G.~Kiss was supported by the Hungarian Scientific Research Fund (OTKA) Grant K 124749.

Cs.~Vincze was supported by EFOP 3.6.2-16-2017-00015. 
The project is co-financed by the European Union and the European Social Fund.
\end{ackn}


\end{document}